
\documentclass[final,12pt]{elsarticle}
\usepackage{xcolor,soul,cancel}
\usepackage[color]{showkeys}
\usepackage{amssymb}
\usepackage{amsmath}
\usepackage{cases} 
\usepackage{amsthm}
\usepackage{graphicx}
\usepackage{float}
\usepackage{enumitem}
\newlist{alphalist}{enumerate}{1} 
\setlist[alphalist]{label=(\alph*)} 
\journal{arXiv}

\definecolor{refkey}{rgb}{0,1,1}
\definecolor{labelkey}{rgb}{1,0,0}

\numberwithin{equation}{section}

\newtheorem{thm}{Theorem}[section]
\newtheorem{lem}[thm]{Lemma}
\newtheorem{cor}[thm]{Corollary}
\newtheorem{prop}[thm]{Proposition}
\newtheorem{rem}[thm]{Remark}
\newtheorem{ex}[thm]{Example}
\newtheorem{defi}[thm]{Definition}

\newcommand{\abs}[1]{\left\vert#1\right\vert}
\newcommand{\rk}{\operatorname{rank}}

\newcommand{\Span}{\operatorname{Span}}
\newcommand{\re}{\operatorname{Re}} 
\newcommand{\im}{\operatorname{Im}}

\newcommand{\eq} [1] {\begin{equation}\label{#1}\quad}
\newcommand{\en} {\end{equation}}
\newcommand{\scal}[1]{\langle#1\rangle}
\newcommand{\norm}[1]{\left\Vert#1\right\Vert}
\newcommand{\C}{\mathbb C}\newcommand{\R}{\mathbb R}

\newcommand{\diag}{\operatorname{diag}}
\newcommand{\conv}{\operatorname{conv}}
\newcommand{\defe}{\operatorname{def}}

\theoremstyle{definition}

\theoremstyle{definition}

\makeatletter
\renewcommand*\env@matrix[1][*\c@MaxMatrixCols c]{%
  \hskip -\arraycolsep
  \let\@ifnextchar\new@ifnextchar
  \array{#1}}
\makeatother

\begin{document}
\begin{frontmatter}

\title{ On Kippenhahn curves of low rank partial isometries
\tnoteref{support}}

\author[nyuad]{Nikita Popov}
\ead{np2361@nyu.edu}

\author[eric]{Eric Shen}
\ead{erick.2013@yandex.ru}
\author[nyuad]{Ilya M. Spitkovsky}
\ead{ims2@nyu.edu, ilya@math.wm.edu, imspitkovsky@gmail.com}
\address[nyuad]{Division of Science and Mathematics,
New York  University Abu Dhabi (NYUAD), Saadiyat Island,
P.O. Box 129188 Abu Dhabi, United Arab Emirates}
\address[eric]{Moscow State University, Moscow, 119991, Russia }

\tnotetext[support]{The results are partially based on the Capstone project of [NP] (2023-24 academic year) under the supervision of [IMS]. The latter was also
supported in part by Faculty Research funding from the Division of Science and Mathematics, New York University Abu Dhabi. }

\begin{abstract} Conditions are established for rank three partial isometries to have circular components contained in their Kippenhahn curves. In particular, such matrices with circular numerical ranges are described. It is also established that the Gau-Wang-Wu conjecture holds for matrices under consideration.
\end{abstract}

\end{frontmatter}

\section{Introduction}

Let $A$ be a bounded linear operator acting on a Hilbert space $\mathcal H$. Its {\em numerical range} $W(A)$ is defined as
\eq{nr}
W(A)=\{\scal{Ax,x}\colon x\in \mathcal H, \norm{x}=1\}, 
\en 
where $\norm{.}$ is the norm associated with the scalar product $\scal{.,.}$ on $\mathcal H$. For any operator $A$, the set $W(A)$ is convex according to the Toeplitz-Hausdorff theorem; see, e.g. \cite{GauWu} for this and other known properties of numerical ranges. 

In this paper, we restrict our attention to {\em finite rank} operators, i.e., operators $A$ with the range $\mathcal R(A)$ having dimension $k<\infty$. The following simple observation (used in particular in \cite[Section 3]{RS13}) is useful in this setting.
\begin{prop}\label{th:fr}
For a rank $k$ operator $A$ \[ \mathcal M:=\mathcal R(A)+\mathcal R(A^*)\]  is a reducing subspace of dimension $n\leq 2k$, and the restriction $A|\mathcal M$ of $A$ onto $\mathcal M^\perp=\ker A\cap\ker A^*$ is the zero operator. 
\end{prop}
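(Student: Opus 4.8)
The plan is to establish all three claims — the description of $\mathcal M^\perp$, the reducing property, and the dimension bound — by exploiting the standard duality between ranges and kernels of adjoints, namely $\mathcal R(A)^\perp=\ker A^*$ and $\mathcal R(A^*)^\perp=\ker A$, which hold for any bounded operator. These relations require no closure hypotheses in our setting, because for a finite rank operator the subspaces $\mathcal R(A)$ and $\mathcal R(A^*)$ are finite-dimensional and hence automatically closed.

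First I would identify $\mathcal M^\perp$. Taking orthogonal complements and using that the complement of a sum of subspaces is the intersection of their complements, $\mathcal M^\perp=(\mathcal R(A)+\mathcal R(A^*))^\perp=\mathcal R(A)^\perp\cap\mathcal R(A^*)^\perp=\ker A^*\cap\ker A$. This yields the claimed description of $\mathcal M^\perp$ at once, and the vanishing of the restriction follows immediately: any $x\in\mathcal M^\perp$ lies in $\ker A$, so $Ax=0$, i.e. $A|\mathcal M^\perp=0$.

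Next I would verify that $\mathcal M$ reduces $A$. Since a subspace reduces $A$ precisely when it is invariant under both $A$ and $A^*$, it suffices to note that $A\mathcal M\subseteq\mathcal R(A)\subseteq\mathcal M$ and $A^*\mathcal M\subseteq\mathcal R(A^*)\subseteq\mathcal M$, both being immediate from the definition of $\mathcal M$ as a sum containing the two ranges. For the dimension bound I would then combine subadditivity of dimension under sums, $\dim\mathcal M\leq\dim\mathcal R(A)+\dim\mathcal R(A^*)$, with the equality of ranks $\dim\mathcal R(A^*)=\dim\mathcal R(A)=k$; the latter is read off from a finite representation $A=\sum_{i=1}^{k}\scal{\cdot,u_i}v_i$, whose adjoint is $A^*=\sum_{i=1}^{k}\scal{\cdot,v_i}u_i$. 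This gives $n=\dim\mathcal M\leq 2k$.

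I do not anticipate a genuine obstacle: the statement is a structural observation rather than a hard estimate. The only point requiring any care is confirming that the range–kernel duality and the rank equality $\rk A=\rk A^*$ carry over from the matrix case to arbitrary finite rank Hilbert space operators, which the finite-dimensionality (hence closedness) of the ranges guarantees.
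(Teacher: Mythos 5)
Your argument is correct and complete: the range--kernel duality gives $\mathcal M^\perp=\ker A\cap\ker A^*$, invariance under both $A$ and $A^*$ gives the reducing property, and $\rk A^*=\rk A=k$ gives the dimension bound. The paper offers no proof of this proposition (it is stated as a "simple observation"), and what you wrote is exactly the standard argument being alluded to.
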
 
So, it suffices to consider $A|\mathcal M$. Slightly abusing the notation, we will identify it with its matrix representation while denoting the latter also by $A\,(\in\C^{n\times n})$. 

In this setting $W(A)$ coincides with the convex hull of a certain algebraic curve $C(A)$:
\eq{nuco} W(A)=\conv C(A). \en  
This curve is obtained as the envelope of the family
\eq{env} \{e^{-i\theta}(\lambda_j(\theta)+i\R)\colon \theta\in (-\pi,\pi],\ j=1,\ldots,n\}, \en 
where $\lambda_j(\theta)$ are the roots of \eq{chpo} P_A(\lambda,\theta)=\det(\re(e^{i\theta}A)-\lambda I)\en  
--- the characteristic polynomial of the hermitian part of $e^{i\theta}A$, also called the {\em Kippenhahn polynomial} of $A$. Moreover, the spectrum $\sigma(A)$ of $A$ coincides with the set of the foci of $C(A)$.

\noindent
These observations go back to \cite{Ki}, see also the English translation \cite{Ki08}.

We are interested in conditions sufficient for $C(A)$ to contain a circle $\mathcal C$; in particular, for $W(A)$ to be a circular disk. The latter happens, of course, if and only if $\mathcal C$ is the boundary $\partial W(A)$ of $W(A)$, and so all other components of $C(A)$ lie inside $\mathcal C$. 

Some general observations concerning this phenomenon are gathered (mainly, for convenience of reference) in the next Section~\ref{s:prel}.

 In Section~\ref{s:pifk} we restrict our attention further to the so called partial isometries and dwell on the conditions for their Kippenhahn curves to contain a circle of radius $1/2$ centered at the origin. Section~\ref{s:gww3} is devoted to the Gau-Wang-Wu conjecture according to which the numerical range $W(A)$ of a partial isometry $A$ has to be centered at the origin, provided that it is assumed to have a circular shape. We prove this conjecture for $A$ having rank three. Actually, we establish a more precise statement that if $C(A)$ for such $A$ contains a circle, then this circle has to be centered at the origin. 

Sections~\ref{s:circgen}--\ref{s:r3d2} are devoted to a complete description  of rank three partial isometries with Kippenhahn curves containing a circular component. Section~\ref{s:circgen} contains a general criterion and the result for matrices of defect one. The criterion is specified and simplified for matrices of defect three (i.e., nilpotent) in Section~\ref{s:nilp}. We revisit the case of circles having radius $1/2$ in Section~\ref{s:C1/2}. The remaining, most complicated, situation -- matrices of defect two and circles of radii different from $1/2$ -- is treated in Section~\ref{s:r3d2}. Throughout Sections~\ref{s:circgen}--\ref{s:r3d2}, criteria for $W(A)$ to be circular disks are singled out. The final Section~\ref{s:Ex} contains figures and numerical examples illustrating the results of Section~\ref{s:r3d2}. 

\section{Preliminary results}\label{s:prel} 
Since $W(A)$, and even $C(A)$, are invariant under unitary similarities, we may without loss of generality substitute $A$ by any matrix unitarily similar to it. In particular, for $A$ with an $m$-dimensional kernel we may without loss of generality suppose that it has the form 
\eq{BC} \begin{bmatrix} 0  & B \\ 0 & C\end{bmatrix}, \en
with the $m$-by-$m$ upper left block. Moreover, when it is convenient, due to the Schur lemma we may suppose that $C$ is upper triangular. 

 Representation \eqref{BC} was used repeatedly in \cite{GWW}. In particular, the following criterion was derived there:
\begin{prop}\label{th:uirr}{\em \cite[Proposition 2.6]{GWW}.} The matrix \eqref{BC} is unitarily irreducible if and only if so is its block $C$ while the block $B$ has full rank.\end{prop}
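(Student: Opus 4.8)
The plan is to translate unitary reducibility into the existence of a nontrivial orthogonal projection commuting with $A$ and then to exploit the block form \eqref{BC}. Recall that $A$ is unitarily reducible exactly when there is a projection $P=P^*=P^2$ with $P\neq 0,I$ and $PA=AP$; for self-adjoint $P$ this also yields $PA^*=A^*P$, so that $\mathcal R(P)$ is a genuine reducing subspace. Writing $P=\begin{bmatrix}P_{11}&P_{12}\\P_{21}&P_{22}\end{bmatrix}$ conformally with \eqref{BC} and comparing the blocks of $PA$ and $AP$, I would record the four relations $BP_{21}=0$, $CP_{21}=0$, $P_{11}B+P_{12}C=BP_{22}$ and $P_{22}C=CP_{22}$, together with $P_{21}=P_{12}^*$ and $P_{11}=P_{11}^*$, $P_{22}=P_{22}^*$. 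I would also note that, since the upper-left block has size $m=\dim\ker A$, one has $\ker B\cap\ker C=\{0\}$: a nonzero vector in this intersection would produce a kernel vector of $A$ outside $\mathcal H_1$.

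For the implication ``$C$ irreducible and $B$ of full rank $\Rightarrow A$ irreducible'' I would show that every such $P$ is trivial. Here full rank is understood as $\mathcal R(B)=\mathcal H_1$ (linearly independent rows). The first two relations force $\mathcal R(P_{21})\subseteq\ker B\cap\ker C=\{0\}$, hence $P_{21}=0$ and $P_{12}=0$. The fourth relation then exhibits $P_{22}$ as an orthogonal projection commuting with $C$, so irreducibility of $C$ gives $P_{22}\in\{0,I\}$. Substituting into $P_{11}B=BP_{22}$ and using $\mathcal R(B)=\mathcal H_1$, I would conclude $P_{11}=0$ or $P_{11}=I$ accordingly, whence $P\in\{0,I\}$ and $A$ is irreducible.

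For the reverse implication I would argue by contraposition, in two cases. If $B$ fails to have full rank, I would pick $0\neq v\in\mathcal H_1$ with $B^*v=0$; then both $A$ and $A^*$ annihilate $\begin{bmatrix}v\\0\end{bmatrix}$, which therefore spans a one-dimensional reducing subspace, so $A$ is reducible. If instead $C$ is reducible, with $Q=Q^*=Q^2\notin\{0,I\}$ and $QC=CQ$, I would attempt to lift $Q$ to $P=\begin{bmatrix}P_{11}&0\\0&Q\end{bmatrix}$; the four relations then collapse to the single equation $P_{11}B=BQ$, which I would try to solve by setting $P_{11}(Bw):=B(Qw)$ on $\mathcal R(B)=\mathcal H_1$.

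The hard part will be precisely this last construction. For $P_{11}$ to be a well-defined orthogonal projection it suffices that $Q$ commute with $B^*B$: this gives self-adjointness, via $\scal{B^*BQw,w'}=\scal{B^*Bw,Qw'}$, and well-definedness, because $Q$ then preserves $\ker B=\ker B^*B$. Such a commutation is false for arbitrary blocks, so it is here that the special structure must enter. For the partial isometries that are the object of study one has $A^*A=\begin{bmatrix}0&0\\0&B^*B+C^*C\end{bmatrix}$ equal to the orthogonal projection onto $\mathcal H_2$, i.e. $B^*B+C^*C=I$; thus $B^*B=I-C^*C$ is a function of $C^*C$, and any $Q$ commuting with $C$—hence with $C^*C$—automatically commutes with $B^*B$. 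Granting this, $P_{11}^2=P_{11}$ follows from $P_{11}^2B=BQ^2=BQ=P_{11}B$ together with surjectivity of $B$, and I would conclude that $P$ is a nontrivial reducing projection for $A$, completing the contrapositive and hence the proof.
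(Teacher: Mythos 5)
Your proof is correct, but note that the paper offers no proof of its own for this statement: it is quoted from \cite[Proposition 2.6]{GWW}, so there is nothing in-paper to compare against, and your reconstruction has to be judged on its own merits. Both directions are sound. The forward implication (irreducibility of $C$ together with surjectivity of $B$ forces any commuting orthogonal projection to be trivial) is a routine block computation; it needs nothing beyond $\ker B\cap\ker C=\{0\}$, which you correctly extract from the normalization that the upper-left zero block of \eqref{BC} has size $m=\dim\ker A$. The substantive point --- and the one you identify exactly right --- is the converse: lifting a reducing projection $Q$ of $C$ to $P=P_{11}\oplus Q$ with $P_{11}(Bw)=B(Qw)$ requires $Q$ to commute with $B^*B$, and this is precisely where the partial isometry identity $B^*B+C^*C=I$ (i.e.\ \eqref{BC1}) is indispensable. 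Indeed, the statement is false for general matrices of the form \eqref{BC}: taking $B=[1\ \ 1]$ and $C=\diag(1,2)$ gives a unitarily irreducible $A$ whose block $C$ is reducible, so the hypothesis you invoke is not optional. Since the present paper states the proposition in Section~2, before partial isometries appear, your explicit flagging of where that hypothesis enters is a useful clarification rather than a defect. One cosmetic remark: the fourth block relation is $P_{21}B+P_{22}C=CP_{22}$ and only collapses to $P_{22}C=CP_{22}$ after $P_{21}=0$ has been established; your argument does derive $P_{21}=0$ first, so nothing is lost.
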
 

For convenience of reference, we will denote by $\mathcal C_{a,r}$ the circle centered at $a$ of radius $r$, abbreviating this simply to $\mathcal C_r$ when $a=0$. We will silently suppose that $r>0$ unless explicitly stated otherwise, in which case we will use the convention $\mathcal C_0=\{0\}$. 

\begin{prop}\label{th:critcirc}
The Kippenhahn curve $C(A)$ of a matrix $A$ contains $\mathcal C_{r}$ if and only  if the Kippenhahn polynomial \eqref{chpo} is divisible by $\lambda^2-r^2$.

\end{prop} 
This is a straightforward consequence of $C(A)$ being the envelope of \eqref{env} and was used repeatedly in the literature (see, e.g., Fact 1 in \cite{HeSpitSu}).
\begin{thm}\label{th:multeig}
If the Kippenhahn curve $C(A)$ of a matrix $A\in\C^{n\times n}$ contains $\mathcal C_{a,r}$, then  $a$ is a defective eigenvalue of $A$.
\end{thm}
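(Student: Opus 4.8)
The plan is to pass to the homogeneous (projective) picture, where the center of the circle sits at a circular point at infinity and the algebraic and geometric multiplicities of the eigenvalue at that center become local intersection data of a determinantal curve. First I would reduce to $a=0$: replacing $A$ by $A-aI$ shifts the Kippenhahn curve by $-a$ (since $P_{A-aI}(\lambda,\theta)=P_A(\lambda+\re(e^{i\theta}a),\theta)$) and leaves the Jordan structure of the eigenvalue at $a$ unchanged, so it suffices to show that $C(A)\supseteq\mathcal C_r$ forces $0$ to be a defective eigenvalue of $A$. Writing $H=\re A$, $K=\im A$ and $p_A(x,y,z)=\det(xH+yK+zI)$, so that $P_A(\lambda,\theta)=p_A(\cos\theta,-\sin\theta,-\lambda)$, the eigenvalue data at $0$ is governed by the projective curve $\Gamma\colon p_A=0$ near the circular point $\omega=[1:i:0]$, because $p_A(1,i,z)=\det(A+zI)$. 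In particular the algebraic multiplicity of $0$ equals the order of vanishing of $\det(A+zI)$ at $z=0$, i.e. the intersection multiplicity of $\Gamma$ with the line $\ell\colon y=ix$ at $\omega$.

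Next I would extract the circle as an explicit component of $\Gamma$. By Proposition~\ref{th:critcirc} the hypothesis $C(A)\supseteq\mathcal C_r$ gives $\lambda^2-r^2\mid P_A(\lambda,\theta)$, i.e. $p_A(\cos\theta,-\sin\theta,\mp r)=0$ for all $\theta$; these are infinitely many real points of the conic $q_0\colon r^2x^2+r^2y^2-z^2=0$, which is nondegenerate and hence irreducible, so $q_0\mid p_A$ and $\Gamma=\Gamma_0\cup\Gamma'$ with $\Gamma_0=\{q_0=0\}$ a smooth conic. The key geometric observation is that $\Gamma_0$ passes through $\omega$ and is tangent there to $\ell$: substituting $y=ix$ into $q_0$ leaves $-z^2$, a double zero at $z=0$, so the intersection multiplicity of $\Gamma_0$ with $\ell$ at $\omega$ equals $2$. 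By additivity of intersection multiplicity over the factorization $p_A=q_0\,p'$, the algebraic multiplicity of $0$ is therefore $2$ plus the intersection multiplicity of $\Gamma'$ with $\ell$ at $\omega$.

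Finally I would compare with the geometric multiplicity $g=\dim\ker A$. Since the matrix $xH+yK+zI$ specializes at $\omega$ to $H+iK=A$, $g$ is the nullity of the defining pencil at $\omega$, and the elementary determinantal estimate (diagonalize $A$ by constant matrices and expand the Schur complement of the invertible block, whose complementary $g\times g$ block has all entries vanishing at $\omega$) gives $g\le\operatorname{mult}_\omega\Gamma$. Combining this with additivity of multiplicity and $\operatorname{mult}_\omega\Gamma_0=1$ yields $g\le 1+\operatorname{mult}_\omega\Gamma'$, whereas the general bound (intersection multiplicity with a line $\ge$ multiplicity of the curve) gives that the algebraic multiplicity is at least $2+\operatorname{mult}_\omega\Gamma'$. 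Hence the algebraic multiplicity exceeds $g$ by at least $1$, so $0$ is defective. The main obstacle I anticipate is precisely this bookkeeping at $\omega$: ensuring the determinantal nullity bound and the line-versus-curve inequality point in the directions that force a \emph{strict} gap (the extra $+1$ coming from the conic's tangency must survive whatever singular behaviour $\Gamma'$ exhibits at $\omega$), together with the routine but necessary verification that the divisibility criterion really produces $q_0$ as an honest factor of $p_A$.
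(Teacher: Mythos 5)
Your proof is correct, but it takes a genuinely different route from the paper's. The paper reduces to $a=0$, puts $A$ in the form \eqref{BC} with an $m\times m$ zero block ($m$ the geometric multiplicity), and combines Proposition~\ref{th:critcirc} with a Schur complement to get the scalar identity $\det(z^2C+zH+C^*)=0$ for all $z\neq 0$; letting $z\to 0$ yields $\det C=0$, hence algebraic multiplicity at least $m+1$. You instead homogenize to $p_A=\det(xH+yK+zI)$ and localize at the circular point $\omega=[1:i:0]$, extracting the conic $r^2x^2+r^2y^2-z^2$ as a factor and replacing the paper's limit argument by intersection-theoretic bookkeeping: the algebraic multiplicity is the intersection number of $\{p_A=0\}$ with the isotropic line $y=ix$ at $\omega$, the geometric multiplicity is bounded by the point multiplicity of the curve at $\omega$ (the corank estimate, again via a Schur complement), and the strict gap is forced by the conic being tangent to that line at $\omega$ (intersection number $2$ versus point multiplicity $1$). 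All the ingredients you invoke --- divisibility by the irreducible conic from infinitely many zeros, additivity of both intersection number and point multiplicity over the factorization $p_A=q_0p'$, the inequality $I_\omega(\cdot,\ell)\geq\operatorname{mult}_\omega(\cdot)$, and $\dim\ker M(\omega)\leq\operatorname{mult}_\omega(\det M)$ for a linear pencil $M$ --- are standard and correctly deployed, and the degenerate case $\omega\notin\Gamma'$ still gives algebraic multiplicity $2>1\geq g$. Your approach uses more machinery but is more conceptual: it explains the extra ``$+1$'' as the unavoidable tangency of any real circle to the isotropic lines at its two circular points, and, like the paper's argument, it actually proves the sharper statement that the algebraic multiplicity exceeds the geometric one by at least one. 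What the paper's elementary computation buys in exchange is the polynomial $\det(z^2C+zH+C^*)$ itself, which is reused as the main tool in Section~\ref{s:pifk} and in the remark on $2(n-m)+1$ points following the theorem; your projective argument does not produce that object.
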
 
\begin{proof}Suppose $C(A)\supset  \mathcal C_{a,r}$.Then $a$, being a focus of $C(A)$, is an eigenvalue of $A$. Considering $A-aI$ in place of $A$, we may without loss of generality suppose that $a$=0. Denoting by $m$ the geometric multiplicity of this eigenvalue, let $A$ be
as in \eqref{BC}. 

According to Proposition~\ref{th:critcirc}, 
\[ \det\begin{bmatrix} -2rI & \omega B\\ \overline{\omega}B^* & \omega C+\overline{\omega}C^*-2rI\end{bmatrix}=0, \quad \abs{\omega}=1,  \]
or, using the Schur complement: 
\eq{Schur} \det (\omega C +\overline{\omega}C^*+H)=0, \quad \abs{\omega}=1, \en
where \eq{Hr} H=B^*B/2r-2rI.\en 

Rewriting \eqref{Schur} as 
\[ \det ( zC +z^{-1}C^*+H)=0, \quad \abs{z} =1, \] 
observe that the determinant under consideration, being a polynomial in $z$ and $z^{-1}$, is identically zero for all $z\neq 0$, not only for unimodular $z$. But then, multiplying by $z$:
\[ \det ( z^2C +zH+C^*)=0, \quad z\neq 0, \]
and so, by continuity, $\det C (=\overline{\det C^*})=0$. The algebraic multiplicity of zero as an eigenvalue of the matrix \eqref{BC} is therefore at least $m+1$.
\end{proof} 
A slightly stronger result holds: since $\deg\det(z^2C-zH+C^*)\leq 2(n-m)$, it suffices to suppose that $C(A)$ contains $2(n-m)+1$ points of a circle $C$ to conclude that, in fact, $C\subset C(A)$ (and, consequently, the center of $C$ is a defective eigenvalue of $A$). 

This observation provides an alternative proof of the following result, which is a restatement of \cite[Theorem 1]{Wu11}.
\begin{thm}\label{th:Wu}
Let $A\in\C^{n\times n}$ be such that $W(A)$ lies in the closed circular disk 
$\mathcal D:=\{z\colon \abs{z-a}\leq r\}$  while its boundary  $\mathcal C_{a,r}=\mathcal C$ contains at least $n-m+1$ points of $W(A)$.  (Here $m\geq 0$ is the geometric multiplicity of $a$ as an eigenvalue of $A$.) Then, in fact, $W(A)=\mathcal D$, $m>0$, and $a$ is a defective eigenvalue of $A$.    
\end{thm}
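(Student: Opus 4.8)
The plan is to normalize first and then extract a single scalar function whose vanishing encodes the whole circle. Passing from $A$ to $A-aI$ translates $W(A)$, $\mathcal C_{a,r}$ and $\mathcal D$ by $-a$ and preserves every eigenvalue multiplicity, so I may assume $a=0$, $\mathcal D=\{z\colon\abs z\le r\}$ and its bounding circle $\mathcal C_r$. I then put $A$ in the block form \eqref{BC} with an $m\times m$ upper-left zero block, so that $C$ is $(n-m)\times(n-m)$. Each \emph{contact point}, i.e.\ each point of $W(A)$ lying on $\mathcal C_r$, is a point where $\partial W(A)$ touches $\mathcal C_r$ from inside (being an extreme point of $\mathcal D$ that belongs to $W(A)\subseteq\mathcal D$); I will feed these contact points into the support function of $W(A)$.

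Following the Schur-complement computation in the proof of Theorem~\ref{th:multeig}, I introduce
\[ g(\omega)=\det\bigl(\omega C+\overline\omega C^*+H\bigr),\qquad \abs\omega=1, \]
with $H$ as in \eqref{Hr}. Exactly as there, $g(e^{i\theta})$ is a nonzero constant multiple of $P_A(r,\theta)=\det(\re(e^{i\theta}A)-rI)$ from \eqref{chpo}. Two features of $g$ drive the proof. First, $g$ is a real-valued trigonometric polynomial in $\theta$ of degree at most $n-m$: it is a determinant of order $n-m$ whose entries are trigonometric polynomials of degree $\le 1$, and $H=H^*$ makes it real on the circle. Second — and this is where the hypothesis $W(A)\subseteq\mathcal D$ enters — every eigenvalue $\lambda_j(\theta)$ of $\re(e^{i\theta}A)$ satisfies $\lambda_j(\theta)\le r$, since the support function of $W(A)$ in each direction is dominated by that of $\mathcal D$. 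Hence every factor $\lambda_j(\theta)-r$ of $P_A(r,\theta)$ is $\le 0$, so that, up to the fixed sign $(-1)^{n-m}$, the function $g$ is nonnegative on the unit circle.

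Here is the crux. A contact point $re^{i\phi_k}$ forces the support function of $W(A)$ in the direction $\theta_k=-\phi_k$ to equal $r$, i.e.\ $\lambda_{\max}(\re(e^{i\theta_k}A))=r$, whence $g(e^{i\theta_k})=0$; distinct contact points have distinct arguments and thus give $n-m+1$ distinct zeros of $g$. Because $(-1)^{n-m}g\ge 0$, every such zero is a minimum of $g$ and is therefore at least a double zero. Consequently $g$ has at least $2(n-m+1)=2(n-m)+2$ zeros counted with multiplicity, which is impossible for a nonzero trigonometric polynomial of degree $\le n-m$, so $g\equiv 0$. This doubling — turning $n-m+1$ contact points into $2(n-m)+2$ zeros — is precisely what lets the containment hypothesis on $W(A)$ do the work that the remark after Theorem~\ref{th:multeig} otherwise extracts from $2(n-m)+1$ points of $C(A)$, and I expect the rigorous justification of this step (nonnegativity together with the multiplicity bound) to be the main obstacle.

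Finally, $g\equiv 0$ gives $P_A(r,\theta)\equiv 0$; the symmetry $P_A(-\lambda,\theta+\pi)=(-1)^nP_A(\lambda,\theta)$ then yields $P_A(-r,\theta)\equiv 0$ as well, so $\lambda^2-r^2$ divides the Kippenhahn polynomial and, by Proposition~\ref{th:critcirc}, $\mathcal C_r\subset C(A)$. Since $C(A)\subseteq W(A)=\conv C(A)$ and $W(A)$ is convex, $\mathcal D=\conv\mathcal C_r\subseteq W(A)\subseteq\mathcal D$, whence $W(A)=\mathcal D$. Applying Theorem~\ref{th:multeig} to $\mathcal C_r\subset C(A)$ shows that the center $0$ (that is, $a$ before the translation) is a defective eigenvalue of $A$; in particular its geometric multiplicity $m$ is positive.
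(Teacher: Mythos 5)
Your argument is correct, and it reaches the conclusion by a genuinely different route from the paper. The paper first invokes Donoghue's theorem to split off a normal summand $A_1$ carrying the eigenvalues of $A$ lying on $\mathcal C$, observes that the remaining contact points are tangencies of $C(A_2)$ with $\mathcal C$ and hence count with multiplicity at least two, and then feeds this count into the remark following Theorem~\ref{th:multeig} (the degree bound $\deg\det(z^2C+zH+C^*)\le 2(n-m)$). You bypass Donoghue entirely: you work directly with $g(e^{i\theta})\propto P_A(r,\theta)$, a real trigonometric polynomial of degree at most $n-m$, and extract the doubling of multiplicities not from a geometric tangency argument but from sign-definiteness of $(-1)^{n-m}g$, which is exactly where the hypothesis $W(A)\subseteq\mathcal D$ enters (every $\lambda_j(\theta)\le r$). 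This treats eigenvalue contact points and tangency points uniformly — a corner of $W(A)$ sitting on $\mathcal C$ still yields a double zero of a one-signed trigonometric polynomial — so no case split is needed; the count $2(n-m)+2>2(n-m)$ then kills $g$, and the symmetry $P_A(-\lambda,\theta+\pi)=(-1)^nP_A(\lambda,\theta)$ plus Proposition~\ref{th:critcirc} and Theorem~\ref{th:multeig} finish exactly as you say (with $m=0$ excluded because Theorem~\ref{th:multeig} forces $a$ to be an eigenvalue). What the paper's route buys is brevity on the page, since Donoghue and the earlier remark do the heavy lifting; what yours buys is self-containedness and a transparent localization of where the containment hypothesis is used.
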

\begin{proof}
Let $\mathcal C\cap\sigma(A)=\{z_1,\ldots,z_k\}$. Then by the Donoghue theorem $A$ is unitarily similar to a block diagonal matrix $\diag[A_1,A_2]$, where $A_1$ is normal, with the spectrum $\{z_1,\ldots,z_k\}$, and  $A_2\in\C^{(n-k)\times (n-k)}$ is such tat $W(A_2)\subseteq\mathcal D$. Moreover, $W(A_2)\cap \mathcal C$ contains at least $(n-k)-m$ points, all of which are tangent points of $\mathcal C$ and $C(A_2)$. Since these points are counted with their multiplicities, the observation made right before the statement of this theorem, with $A$ replaced by $A_2$ and $n$ with $n-k$, implies that $W(A_2)=\mathcal D$,  $a$ is a defective eigenvalue of $A_2$, and therefore of $A$. Furthermore, $W(A_1)=\conv\{z_1,\ldots,z_k\}\subset\mathcal D$, and so $W(A)=W(A_2)=\mathcal D$. 
\end{proof}
\noindent Let us also mention a more recent related result \cite[Theorem 4.1]{PSW}. 

To conclude this auxiliary section, for convenience of reference, let us state a useful result on singular polynomial matrix functions. 
\begin{prop} \label{prop:polynomial_solution}
    Let $P(z) = \sum_{i} P_iz^i$ be a matrix polynomial over $\mathbb{C}$, such that $\det P(z) = 0$ identically. Then there exists a vector polynomial $v(z)$ such that $P(z)v(z) = 0$, also  identically.
\end{prop}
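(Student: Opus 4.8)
The plan is to leave the polynomial ring $\C[z]$ and work over its field of fractions $\C(z)$, the field of rational functions in $z$, where ordinary finite-dimensional linear algebra is available. The entire content of the statement is that singularity of $P$ as a matrix over this field forces a nonzero kernel vector, and that such a vector can be taken to have polynomial entries after clearing denominators.

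Concretely, I would proceed in three steps. First, I regard $P(z)=\sum_i P_i z^i$ as a single $n\times n$ matrix $P$ with entries in $\C(z)$; since each entry is a polynomial this is legitimate, and $\det P$, computed over $\C(z)$, is exactly the polynomial $\det P(z)$, which by hypothesis is the zero element of $\C(z)$. Second, because $\C(z)$ is a field, a square matrix with zero determinant has rank strictly less than $n$, so by rank--nullity its null space is nonzero; Gaussian elimination over $\C(z)$ then produces an explicit nonzero vector $w(z)\in\C(z)^n$ with $Pw=0$. Third, I let $d(z)\in\C[z]$ be a common denominator of the finitely many rational-function entries of $w(z)$ (for instance the product of all their denominators) and set $v(z):=d(z)\,w(z)$. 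Then $v$ has entries in $\C[z]$, it is not identically zero because $w\neq 0$ and $d\neq 0$, and $P(z)v(z)=d(z)\,P(z)w(z)=0$ identically, as required.

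I do not anticipate a genuine obstacle here: the only point deserving care is the passage from ``zero determinant over a field'' to ``nontrivial kernel'', which is simply the rank--nullity theorem applied over $\C(z)$ rather than over $\C$. If one prefers to stay inside $\C[z]$, an equally short alternative invokes the Smith normal form over the principal ideal domain $\C[z]$: writing $P=UDV$ with $U,V$ unimodular and $D$ diagonal, the vanishing of $\det P$ forces some diagonal entry of $D$ to equal $0$, and then $v:=V^{-1}e_j$ (with $e_j$ the corresponding standard basis vector) is a nonzero polynomial vector annihilated by $P$, since $V^{-1}$ is again a polynomial matrix. Either route gives the claim immediately.
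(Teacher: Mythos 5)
Your proposal is correct. The paper itself disposes of this statement in one line by citing the Smith normal form of $P$ over $\C[z]$ (Gantmacher, Ch.\ 6, Thm.\ 3): writing $L(z)P(z)R(z)=D(z)$ with $L,R$ unimodular, the vanishing of $\det P$ forces a zero diagonal entry of $D$, and the corresponding column of $R$ (equivalently, in your notation $V^{-1}e_j$) is the desired polynomial kernel vector. That is exactly your second, ``alternative'' route, so that part coincides with the paper's argument. Your primary route --- viewing $P$ as a singular matrix over the field of rational functions $\C(z)$, extracting a nonzero kernel vector by rank--nullity and Gaussian elimination there, and then clearing denominators --- is genuinely different and more elementary: it avoids the Smith form entirely and uses only linear algebra over a field together with the fact that $\C(z)$ is the fraction field of $\C[z]$. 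What the Smith form buys in exchange is finer structural information (the invariant factors, and hence control over the minimal degree of $v$, which the paper's later arguments exploit only insofar as they pick $v$ of minimal degree --- something both constructions permit). Both arguments are complete; the one small point worth making explicit in your first route is that the common denominator $d(z)$ is a nonzero polynomial, which you do note, so there is no gap.
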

We did not find this exact statement in the literature, but it is of course known and follows immediately from the normal Smith form (see, e.g., \cite{Gan1}, Theorem 3 of Chapter 6) of $P$.  

\section{Finite rank partial isometries}\label{s:pifk} 
In what follows, we restrict our attention to {\em partial isometries}, i.e., $A$ satisfying 
\eq{pi} \norm{Ax}=\norm{x} \text{ for all } x\perp\ker A. \en 

A partial isometry with a trivial kernel is nothing but a unitary matrix, and so this extreme case is not of particular interest. In other cases, we will be using form \eqref{BC} of $A$. Note that then \eqref{pi} can be rewritten as \eq{BC1} B^*B+C^*C=I. \en 

The simplest ``true'' partial isometry is the Jordan block
$J_2=\left[\begin{smallmatrix}0 & 1\\ 0 & 0\end{smallmatrix}\right]$. It is well known (and easy to check) that $C(J_2) =\mathcal C_{1/2}$. 
 Consequently, $W(J_2)$ is the circular disk $\{ z\colon \abs{z}\leq 1/2\}$. 

It is therefore clear that for any matrix $A$ unitarily similar to a block diagonal matrix with $J_2$ as one of its blocks, \eq{cont1/2} C(A)\supset \mathcal C_{1/2}. \en 
We will now show that for partial isometries, and under some additional conditions, the converse is also true.

\begin{thm} \label{thm:mat_poly_general}
Let $A$ be a rank $k$ partial isometry such that \eqref{cont1/2} holds.  If in its representation \eqref{BC} \eq{k/2cond} \dim\ker C\geq\lfloor\frac{k}{2}\rfloor,\en then $A$ is unitarily reducible with one of the blocks equal $J_2$.
\end{thm}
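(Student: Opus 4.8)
The plan is to convert the hypothesis \eqref{cont1/2} into a polynomial identity for $C$, and then to show that a $J_2$ summand is produced by any common null vector of $C$ and $C^*$. First I would use the representation \eqref{BC}, so that $C\in\C^{k\times k}$ and \eqref{BC1} holds, and run the computation from the proof of Theorem~\ref{th:multeig} with $a=0$ and $r=1/2$. Then $2r=1$, and \eqref{Hr} together with \eqref{BC1} gives $H=B^*B-I=-C^*C$, so \eqref{Schur} becomes, after clearing denominators,
\[ \det(z^2C-zC^*C+C^*)=0 \quad\text{for all } z\neq0. \]
Next I would record that it suffices to find a unit vector $w\in\ker C\cap\ker C^*$: putting $u=Bw$, \eqref{BC1} gives $\norm{u}^2=\scal{(I-C^*C)w,w}=1$, the vectors $(u,0)$ and $(0,w)$ are orthonormal, and $A(0,w)=(u,0)$, $A^*(u,0)=(0,w)$, $A^*(0,w)=0$; hence their span reduces $A$ and carries it onto $J_2$.

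To locate such a $w$, I would decompose $\C^k=\ker C\oplus(\ker C)^\perp$ and write $C=\left[\begin{smallmatrix}0&C_{12}\\0&C_{22}\end{smallmatrix}\right]$ and $C^*C=\left[\begin{smallmatrix}0&0\\0&G\end{smallmatrix}\right]$, where $G=C_{12}^*C_{12}+C_{22}^*C_{22}$ is the compression of $C^*C$ to $(\ker C)^\perp$ and is therefore positive definite. A short computation identifies $\ker C\cap\ker C^*$ with $\ker C_{12}^*$, so, writing $d=\dim\ker C$, the whole theorem reduces to the single inequality $\rk C_{12}<d$.

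Here I would invoke Proposition~\ref{prop:polynomial_solution}. In the block form the identity above reads $\det P(z)\equiv0$ for $P(z)=\left[\begin{smallmatrix}0&z^2C_{12}\\C_{12}^*&R(z)\end{smallmatrix}\right]$, with $R(z)=z^2C_{22}-zG+C_{22}^*$. The proposition yields a nonzero polynomial vector $v=(p,q)$, split according to $\ker C\oplus(\ker C)^\perp$, with $C_{12}q\equiv0$ and $C_{12}^*p+Rq\equiv0$.

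The main obstacle is the boundary case $d=\lfloor k/2\rfloor$, where the naive count $\dim(\ker C\cap\ker C^*)\geq 2d-k$ gives only $0$ or $-1$, so the identity must be used in earnest. I would argue by contradiction: suppose $\rk C_{12}=d$, the only way $\ker C_{12}^*$ can be trivial. Since $\rk C_{12}\le\min(d,k-d)$ we may assume $d\le k-d$, and then $\dim\ker C_{12}=k-2d\in\{0,1\}$ by \eqref{k/2cond}. If $\ker C_{12}=\{0\}$, then $C_{12}q\equiv0$ forces $q\equiv0$. If $\ker C_{12}=\C w_0$, then $q=fw_0$ for a scalar polynomial $f$; because $\mathcal R(C_{12}^*)=(\ker C_{12})^\perp$, the relation $C_{12}^*p=-Rq$ forces $f\,\scal{R(\cdot)w_0,w_0}\equiv0$, and since the coefficient of $z$ in $\scal{R(z)w_0,w_0}$ equals $-\scal{Gw_0,w_0}<0$, we get $f\equiv0$ and again $q\equiv0$. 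With $q\equiv0$ the injectivity of $C_{12}^*$ (full column rank $d$) gives $p\equiv0$, contradicting $v\neq0$. Hence $\rk C_{12}<d$, a common null vector of $C$ and $C^*$ exists, and the reducing $J_2$ block follows as above. It is precisely the positive definiteness of $G$ that disposes of the delicate boundary case.
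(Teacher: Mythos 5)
Your proof is correct, and while it shares the paper's skeleton---reduce the claim to $\ker C\cap\ker C^*\neq\{0\}$, derive the identity $\det(z^2C-zC^*C+C^*)\equiv 0$ from \eqref{cont1/2} and \eqref{BC1}, and apply Proposition~\ref{prop:polynomial_solution} to get a polynomial null vector---the core contradiction argument is genuinely different. The paper chooses $v$ of \emph{minimal} degree (so $v_0,v_d\neq 0$) and inspects individual coefficient equations (the three lowest powers of $z$ when $k$ is even, the four highest when $k$ is odd), working with the subspaces $\ker C\dotplus\ker C^*$ and $\im C\cap\im C^*$. You instead block-decompose along $\ker C\oplus(\ker C)^\perp$, reduce the whole theorem to the single inequality $\rk C_{12}<\dim\ker C$, and kill the polynomial null vector using the injectivity of $C_{12}^*$ together with the positive definiteness of $G$ (the compression of $C^*C$ to $(\ker C)^\perp$), the decisive point being that the $z$-coefficient of $\scal{R(z)w_0,w_0}$ equals $-\scal{Gw_0,w_0}<0$. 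Your route buys a more uniform treatment of the two parities (they differ only in whether $\dim\ker C_{12}$ is $0$ or $1$), dispenses with the minimal-degree normalization, absorbs the ``easy'' case $\dim\ker C>\lfloor k/2\rfloor$ into the reduction $d\le k-d$, and makes explicit the construction of the reducing $J_2$ block from a common null vector $w$ (which the paper only sketches); the paper's version, in exchange, stays closer to the raw recurrence among the $v_i$ and avoids introducing the block coordinates. Both are valid; yours is arguably the cleaner packaging of the same underlying mechanism.
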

\begin{proof}
It suffices to show that \eq{nz} \ker C\cap\ker C^*\neq\{0\}.\en   Indeed, if this is the case, we can use a block diagonal unitary similarity to simultaneously replace $C$ and $B$ with $[0]\oplus C_0$ and $[1]\oplus B_0$, respectively.

Observe that \eqref{nz} follows immediately from dimensionality considerations provided that $\dim\ker C>\lfloor\frac{k}{2}\rfloor$. So, only the case $\dim\ker C=\lfloor\frac{k}{2}\rfloor$ needs to be considered.

From the proof of Theorem~\ref{th:multeig} one can see that \eqref{cont1/2} holds if and only if $\det (z^2C+zH+C^*)$ is identically zero. Here $H$ is given by \eqref{Hr} with $r=1/2$. In other words, $H=B^*B-I$. According to \eqref{BC1}, this simplifies further to $H=-C^*C$, and so
\[ \det P(z)=0, \quad z\in\C,\quad \text{ where } P(z)= z^2C-zC^*C+C^*. \]

According to Proposition~\ref{prop:polynomial_solution}, there exist vector polynomials $v(z) = v_dz^d + \ldots + v_1z+v_0$, such that $P(z)v(z)=0$ identically. Choosing such a polynomial with minimal possible $d$, we have, in particular, $v_0, v_d \neq 0$. If $d=0$, it is at hand that $Cv_0=C^*v_0=0$, and thus $\ker C\cap\ker C^* \neq \{0\}$. Assume $d > 0$ while $\ker C\cap\ker C^*=\{0\}$. In what follows we assume that vectors $v_i$ with  $i>d$ or $i < 0$ are zero.

{\sl Case 1.} Rank $A$ is even. Then $\ker C\dotplus \ker C^*$ is the entire underlying $k$-dimensional space. 
     
Let us look at the three lowest power terms of $P(z)v(z)$:
         \begin{enumerate}
        \item $z^0: C^*v_0=0$
        \item $z^1:C^*v_1-C^*Cv_0=0 $
        \item $z^2: C^*v_2-C^*Cv_1 + Cv_0=0.$
        \end{enumerate}
    From the $z^0$ term we get $C^*v_0 = 0$, from the $z^2$ term we get $$Cv_0 = C^*(Cv_1 - v_2) \in \im(C) \cap \im(C^*) = (\ker(C) \dotplus \ker(C^*))^\perp = \left\{0\right\}.$$
    Hence $Cv_0 = C^*v_0 = 0$, which is a contradiction with $v_0 \neq 0$.

{\sl Case 2.} Rank $A$ is odd. Then $\ker C\dotplus\ker C^*$ has codimension one, with 
$\mathcal L:=\im C\cap\im C^*$ being its (one-dimensional) orthogonal complement. The underlying space therefore partitions as $(\ker C\dotplus\ker C^*)\oplus \mathcal L$. 
    
Let us consider the  four highest terms of $P(z) v(z)$:
    \begin{enumerate}
        \item $z^{d+2}: Cv_d = 0$
        \item $z^{d+1}: Cv_{d-1} - C^*Cv_d = 0$
        \item $z^d\quad: C v_{d-2} - C^*Cv_{d-1} + C^*v_d = 0$
        \item $z^{d-1}: C v_{d-3} - C^*Cv_{d-2} + C^*v_{d-1} = 0.$
    \end{enumerate}
From the first two we conclude that $v_d$ and $v_{d-1}$ belong to $\ker C$. Hence, the third equation simplifies to 
\eq{cvd} C v_{d-2}  + C^*v_d = 0,\en 
and so $Cv_{d-2}$ and $C^*v_d$ both belong to $\mathcal L$. Note that these vectors are non-zero, since otherwise $v_d$ would belong to  $\ker C\cap\ker C^*$. 

Due to \eqref{cvd}, the fourth equation can be rewritten as 
\[  C v_{d-3} + C^*(C^*v_{d} + v_{d-1}) = 0, \]
implying that $C^*(C^*v_{d} + v_{d-1})\in\mathcal L$. Since $\mathcal L$ is one-dimensional, this vector is therefore collinear with $C^*v_d$:
\[ C^*(C^*v_{d} + v_{d-1})-\lambda C^*v_d=0 \text{ for some } \lambda\in \C. \]
Equivalently, $C^*v_d+v_{d-1}-\lambda v_d\in\ker C^*$. On the other hand, 
$C^*v_d+v_{d-1}-\lambda v_d\in\ker C\oplus\mathcal L$, which has only a trivial intersection with $\ker C^*$. 

Consequently, $C^*v_d+v_{d-1}-\lambda v_d=0$, which implies that $C^*v_d\in\ker C$ along with $v_d, v_{d-1}$.
Recalling that $C^*v_d\in\mathcal L$, this is only possible if $C^*v_d=0$ --- a contradiction.  
\end{proof} 

Due to Theorem~\ref{th:multeig}, condition \eqref{k/2cond} is satisfied automatically when $k\leq 3$. So, the following statement holds.
\begin{cor} \label{th:1/2}
Let $A$ be a partial isometry of rank at most three. Then $C(A)$ contains the circle $\mathcal C_{1/2}$ if and only if $A$ is unitarily reducible, with one of the blocks equal $J_2$.\end{cor}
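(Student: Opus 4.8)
The plan is to deduce the corollary directly from Theorem~\ref{thm:mat_poly_general}, whose conclusion is exactly what the forward implication asserts, by verifying that its hypothesis \eqref{k/2cond} holds automatically once $k\leq 3$. The reverse implication --- if $A$ is unitarily reducible with a $J_2$ block then $C(A)\supset\mathcal C_{1/2}$ --- needs no further argument, since it was already recorded in \eqref{cont1/2} and the sentence preceding it (using $C(J_2)=\mathcal C_{1/2}$ together with the fact that the Kippenhahn curve of a block diagonal matrix is the union of the curves of its blocks). Thus the whole content sits in the forward direction: assuming $C(A)\supset\mathcal C_{1/2}$, I must produce the $J_2$ block.

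First I would observe the elementary arithmetic that for $k\in\{1,2,3\}$ one has $\lfloor k/2\rfloor\leq 1$, so that condition \eqref{k/2cond} reduces to the single requirement $\dim\ker C\geq 1$ (and is trivially satisfied when $k=1$). Hence it is enough to show that the block $C$ in the representation \eqref{BC} of $A$ is singular.

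Next I would extract this singularity from Theorem~\ref{th:multeig}, applied with $a=0$ and $r=1/2$, since $\mathcal C_{1/2}=\mathcal C_{0,1/2}$. That theorem guarantees that the center $0$ is a \emph{defective} eigenvalue of $A$; more to the point, its proof establishes the quantitative statement that the algebraic multiplicity of $0$ is at least $m+1$, where $m=\dim\ker A$ is the geometric multiplicity of $0$ and, simultaneously, the size of the upper-left block in \eqref{BC}. The block-triangular shape of \eqref{BC} factors the characteristic polynomial as
\[ \det(A-\lambda I)=(-\lambda)^m\det(C-\lambda I), \]
so the algebraic multiplicity of $0$ equals $m$ plus the multiplicity of $0$ as a root of $\det(C-\lambda I)$. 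Comparing this with the lower bound $m+1$ forces $\det C=0$, that is, $\dim\ker C\geq 1$. This verifies \eqref{k/2cond}, and Theorem~\ref{thm:mat_poly_general} then delivers the unitary reduction with a $J_2$ block, completing the forward implication.

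I do not expect a genuine obstacle here, since all the difficulty has been absorbed into Theorems~\ref{th:multeig} and~\ref{thm:mat_poly_general}. The only two points that demand minor care are the arithmetic $\lfloor k/2\rfloor\leq 1$ for $k\leq 3$, and the passage from the qualitative word ``defective'' to the strict inequality on algebraic multiplicity; for the latter it is essential to invoke the conclusion $m+1$ actually proved inside Theorem~\ref{th:multeig}, rather than merely the defectiveness recorded in its statement, as it is precisely this extra unit that yields $\det C=0$.
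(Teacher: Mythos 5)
Your proof is correct and follows essentially the same route as the paper: the authors likewise dispose of the corollary by noting that Theorem~\ref{th:multeig} forces $\det C=0$, so that \eqref{k/2cond} holds automatically for $k\leq 3$, and then invoke Theorem~\ref{thm:mat_poly_general}. The only difference is that you spell out the arithmetic and the characteristic-polynomial factorization that the paper leaves implicit (and note that the qualitative word ``defective'' already means algebraic multiplicity exceeds geometric, so the statement of Theorem~\ref{th:multeig} suffices without digging into its proof).
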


Note that any matrix $A$ of norm one, not necessarily a partial isometry, is unitarly reducible with one of the blocks equal $J_2$, provided that the numerical radius $w(A)$ of $A$ equals $1/2$, see \cite[Lemma 2]{JL}. For partial isometries, a repeated application of this result yields the following: 
\begin{thm}\label{th:cit1/2} Let $A\in\C^{n\times n}$ be a partial isometry. Then $w(A)=1/2$ if and only if $A$ is unitarily similar to the direct sum of $m (\in\{1,\ldots\lfloor n/2\rfloor\})$ blocks $J_2$ with an $(n-2m)$-by-$(n-2m)$ zero block. If this is the case, then $W(A)=\{ z\colon \abs{z}\leq 1/2\}$ while $C(A)$ consists of $m$ copies of the circle $\{z\colon\abs{z}=1/2\}$ and $n-2m$ copies of the origin. \end{thm}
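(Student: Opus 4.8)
The plan is to prove the nontrivial (forward) implication by peeling off copies of $J_2$ one at a time, using \cite[Lemma 2]{JL} as the engine, and to dispatch the converse by a direct computation on direct sums. First I would record the converse, which is immediate: if $A$ is unitarily similar to $\bigoplus_{j=1}^{m}J_2\oplus 0_{n-2m}$, then the numerical radius, the numerical range, and the Kippenhahn curve of a direct sum are, respectively, the maximum, the convex hull of the union, and the union of those of the summands. Hence the asserted values of $w(A)$, $W(A)$, and $C(A)$ follow at once from $w(J_2)=1/2$, $W(J_2)=\{z\colon\abs{z}\le 1/2\}$, and $C(J_2)=\mathcal C_{1/2}$, together with $w(0)=0$, $W(0)=\{0\}$, and $C(0)=\{0\}$.

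For the forward direction, suppose $A$ is a partial isometry with $w(A)=1/2$. Since $w(A)\ne 0$ we have $A\ne 0$, and a nonzero partial isometry satisfies $\norm{A}=1$; thus \cite[Lemma 2]{JL} applies and gives that $A$ is unitarily similar to $J_2\oplus A'$. I would then verify that the complementary summand $A'$ is again a partial isometry. Recalling that $M$ is a partial isometry exactly when $M^*M$ is an orthogonal projection, and that $(J_2\oplus A')^*(J_2\oplus A')=J_2^*J_2\oplus A'^*A'$ is a projection if and only if each diagonal block is, the hypothesis on $A$ forces $A'^*A'$ to be a projection, so $A'$ lies in the same class. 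Moreover $w(A)=\max\{w(J_2),w(A')\}=\max\{1/2,w(A')\}$, whence $w(A')\le 1/2$.

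The inductive step rests on the dichotomy that a partial isometry has numerical radius either $0$ or at least $1/2$: by the general inequality $w(M)\ge\norm{M}/2$, a nonzero partial isometry $A'$ obeys $w(A')\ge\norm{A'}/2=1/2$. Consequently either $A'=0$, giving $A\cong J_2\oplus 0$ and finishing the argument, or $w(A')=1/2$ with $\norm{A'}=1$, in which case \cite[Lemma 2]{JL} applies to $A'$ and splits off a further $J_2$. Iterating this (formally, inducting on the dimension $n$) terminates and yields $A\cong\bigoplus_{j=1}^{m}J_2\oplus 0_{n-2m}$. The range $1\le m\le\lfloor n/2\rfloor$ then follows because $w(A)=1/2\ne 0$ forces $m\ge 1$, while each $J_2$ consumes two dimensions.

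The one step demanding genuine care is confirming that the complementary block $A'$ stays a partial isometry after each reduction: \cite[Lemma 2]{JL} only supplies the reducibility $A\cong J_2\oplus A'$ and says nothing about the internal structure of $A'$, so without the projection argument above there would be no justification for reapplying the lemma, and the induction could escape the class of partial isometries. Everything else is bookkeeping: the dichotomy $w(A')\in\{0\}\cup[1/2,\infty)$ combined with the monotonicity bound $w(A')\le 1/2$ pins $w(A')$ down to exactly $0$ or $1/2$ at every stage, which simultaneously guarantees that the iteration can continue when $A'\ne 0$ and that it must terminate in a zero block.
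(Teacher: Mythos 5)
Your proposal is correct and follows essentially the same route as the paper, which proves this theorem by exactly the ``repeated application'' of \cite[Lemma 2]{JL} that you carry out. The paper leaves the iteration as a one-line remark, whereas you supply the details it omits --- in particular the verification via the projection criterion that the complementary block remains a partial isometry, and the dichotomy $w(A')\in\{0\}\cup[1/2,\infty)$ that drives the induction --- so your write-up is a faithful (and more complete) rendering of the intended argument.
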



The bound in Theorem~\ref{thm:mat_poly_general} is sharp. To illustrate, consider the following two families of examples.

\begin{ex} \label{ex:odd_dim}
Let 
    $$C = \begin{bmatrix}
    1 & 0 & 0 & 0 & 0 \\
    0 & 0 & 1 & 0 & 0\\
    0 & 0 & 0 & 0 & 1\\
    0 & 1 & 0 & 0 & 0 \\
    0 & 0 & 0 & 0 & 0 \end{bmatrix}.$$
Then  $\ker C = \Span(e_4) \text{ while } \ker C^*  = \Span(e_5)$. Observe that \eq{Cz} z^2C-zC^*C+C^* = \begin{bmatrix}
z^2 - z + 1 & 0 & 0 & 0 & 0 \\
0 & -z & z^2 & 1 & 0 \\
0 & 1 & -z & 0 & z^2 \\
0 & z^2 & 0 & 0 & 0 \\
0 & 0 & 1 & 0 & -z \\
\end{bmatrix} \en 
is rank-deficient. Taking direct sum of $C$ with $m$ copies of $2\times 2$  identity matrices $I_2$ and $l$ copies of $J_2$ ($m,l\geq 0$), we obtain a $k$-by-$k$ matrix $$C_k = C \oplus I_2\oplus \ldots \oplus I_2 \oplus J_2 \oplus\ldots \oplus J_2$$ with $\lfloor k/2\rfloor=m+l+2$ and $\dim\ker C_k=l+1$. So, $\lfloor k/2\rfloor-\dim\ker C_k=m+1$, and by choosing $l$ appropriately we can make this difference attain all natural values between one and $\lfloor k/2\rfloor-1$. At the same time, it is easy to see that \[ \ker C_{k} \cap \ker C_{k}^* = \ker C \cap \ker C =\left\{0\right\},\] while $z^2C_{k}+C^*_{k}-zC_k^*C_k$ is singular along with \eqref{Cz}.
\end{ex}

\begin{ex}\label{ex:even_dim}
To obtain an even-dimensional family, one can start with the following $4\times4$ matrix $C$:
$$C = \begin{bmatrix}
    0 & 0 & 1 & 0 \\
    0 & 0 & 0 & 0 \\
    0 & 1 & 0 & 0 \\
    1 & 0 & 0 & 0
\end{bmatrix}$$    
and define 

$$C_{k} = C \oplus I_2\oplus \ldots \oplus I_2 \oplus J_2 \oplus\ldots \oplus J_2$$
analogously to Example~\ref{ex:odd_dim}.

Indeed, observe that $\ker C = \Span(e_4)$ and $\ker C^* = \Span(e_2)$, so $\ker(C) \cap \ker(C^*) = \{0\}$.
On the other hand, $\rk (z^2C-zC^*C+C^*) \leq 3$. Indeed,
$C^*C = \diag [1,1,1,0]$, and 
$$z^2C-zC^*C+C^* = \begin{bmatrix}
    -z & 0 & z^2 & 1 \\
    0 & -z & 1 & 0 \\
    1 & z^2 & -z & 0 \\
    z^2 & 0 & 0 & 0
\end{bmatrix}$$
has the determinant identically equal to zero.
\end{ex}

Examples~1 and 2 show that condition $\det(z^2C-zC^*C+C^*)=0$ may hold identically in $z$ while the  intersection $\ker C \cap \ker C^*$ is trivial. Curiously, the situation change if we flip two coeficients of the polynomial under consideration. Namely: 
\begin{prop} \label{prop:new_poly}
If for every $z\in\C$  \eq{new_poly} \det(z^2C+zC^*-C^*C)=0,  \en
then the intersection $\ker C\cap \ker C^*$ is non-trivial.
\end{prop}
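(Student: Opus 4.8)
The plan is to prove the contrapositive: assuming $\ker C\cap\ker C^*=\{0\}$, I will show that $\det(z^2C+zC^*-C^*C)$ is not the zero polynomial by exhibiting one of its coefficients (in $z$) that does not vanish. This is the natural route here because, unlike in Theorem~\ref{thm:mat_poly_general}, no dimensional hypothesis is available, so the term-by-term analysis of a polynomial null-vector $P(z)v(z)=0$ does not obviously close; working directly with the determinant and its order of vanishing at the origin avoids this difficulty. Indeed, the recursion analysis gives $v_0,v_d\in\ker C$ at both ends (the distinctive feature of this polynomial), but turning that into a common kernel vector seems to require a dimension count, which we do not have.

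First I would diagonalize via the singular value decomposition $C=U_1\Sigma U_2^*$, with $U_1,U_2$ unitary and $\Sigma=\diag[\Sigma_\alpha,0]$, where $\Sigma_\alpha$ is the invertible $(n-\kappa)\times(n-\kappa)$ block of nonzero singular values and $\kappa=\dim\ker C=\dim\ker C^*$. Setting $W:=U_2^*U_1$ and conjugating by $U_2$ leaves the determinant unchanged and turns $P(z)$ into $\hat P(z)=z^2W\Sigma+z\Sigma W^*-\Sigma^2$. Since $\Sigma$ is supported on the first block, $\hat P(z)$ acquires a $2\times2$ block form whose lower-right $\kappa\times\kappa$ corner is identically zero, whose off-diagonal blocks are $z\Sigma_\alpha W_{\beta\alpha}^*$ and $z^2W_{\beta\alpha}\Sigma_\alpha$, and whose top-left block $A(z)$ satisfies $A(0)=-\Sigma_\alpha^2$, which is invertible.

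Next I would translate the hypothesis: in the conjugated basis $\ker C$ becomes $K_0:=\Span(e_{n-\kappa+1},\dots,e_n)$ and $\ker C^*$ becomes $WK_0$, so $\ker C\cap\ker C^*=\{0\}$ is equivalent to $\ker W_{\alpha\beta}=\{0\}$, i.e. $\rk W_{\alpha\beta}=\kappa$. Using that the two off-diagonal blocks of a unitary matrix have equal rank (a standard consequence of the CS decomposition; alternatively, comparing the lower-right $\kappa\times\kappa$ blocks of $W^*W=I$ and $WW^*=I$ shows that $W_{\alpha\beta}^*W_{\alpha\beta}$ and $W_{\beta\alpha}W_{\beta\alpha}^*$ share the same spectrum), this yields $\rk W_{\beta\alpha}=\kappa$, so $W_{\beta\alpha}W_{\beta\alpha}^*$ is invertible, indeed positive definite. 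Since $A(0)$ is invertible, for $z$ near $0$ the Schur complement formula gives $\det\hat P(z)=\det A(z)\cdot\det\bigl(-z^3W_{\beta\alpha}\Sigma_\alpha A(z)^{-1}\Sigma_\alpha W_{\beta\alpha}^*\bigr)=z^{3\kappa}\det A(z)\det G(z)$, where $G(z)=-W_{\beta\alpha}\Sigma_\alpha A(z)^{-1}\Sigma_\alpha W_{\beta\alpha}^*$ is analytic near $0$ with $G(0)=W_{\beta\alpha}W_{\beta\alpha}^*$. Hence $\det\hat P$ vanishes to order exactly $3\kappa$ at $0$, with coefficient $\det(-\Sigma_\alpha^2)\det(W_{\beta\alpha}W_{\beta\alpha}^*)\neq0$, so $\det P\not\equiv0$, completing the contrapositive.

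The step I expect to be most delicate is the bookkeeping in the previous paragraph: the common-kernel condition naturally produces the block $W_{\alpha\beta}$, while $\hat P$ (because the support of $\Sigma$ is opposite to $K_0$) produces $W_{\beta\alpha}$, so the equal-rank property of the off-diagonal blocks is the one genuinely needed ingredient. It is also instructive to see where the ``flip'' enters: for the polynomial of Theorem~\ref{thm:mat_poly_general} the constant term is not $-\Sigma^2$ but is linear in $\Sigma$, so the corresponding lowest-order coefficient need not be a positive-definite Gram determinant; this is exactly why Examples~\ref{ex:odd_dim} and \ref{ex:even_dim} are possible there and impossible here.
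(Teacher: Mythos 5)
Your proof is correct, but it takes a genuinely different route from the paper's. The paper argues directly: it invokes Proposition~\ref{prop:polynomial_solution} to produce a vector polynomial $v(z)=v_0+\dots+v_dz^d$ annihilated by $P(z)=z^2C+zC^*-C^*C$, and closes the resulting recursion using only the four lowest-order coefficients --- the key step being that the $z^1$ and $z^3$ equations force $Cv_1\in(\ker C+\ker C^*)\cap(\im C\cap\im C^*)=\{0\}$, whence $v_0\in\ker C\cap\ker C^*$. (So your motivating remark that the null-vector analysis ``does not obviously close'' without a dimension count is not borne out: the paper closes it with no such count.) You instead prove the contrapositive by passing to the SVD $C=U_1\Sigma U_2^*$, conjugating by $U_2$ to get $\hat P(z)=z^2W\Sigma+z\Sigma W^*-\Sigma^2$ with $W=U_2^*U_1$, and taking the Schur complement of the invertible block $A(0)=-\Sigma_\alpha^2$; all the steps check out, including the translation of $\ker C\cap\ker C^*=\{0\}$ into $\rk W_{\alpha\beta}=\kappa$ and the needed passage to $\rk W_{\beta\alpha}=\kappa$ via the equal-rank property of the off-diagonal blocks of a unitary matrix. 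What your approach buys is quantitative and structural information the paper's does not provide: it shows that under the hypothesis $\ker C\cap\ker C^*=\{0\}$ the determinant $\det P(z)$ vanishes at the origin to order exactly $3\kappa$ with explicitly positive leading data $\det(-\Sigma_\alpha^2)\det(W_{\beta\alpha}W_{\beta\alpha}^*)$, and the same block computation makes transparent why the flipped polynomial $z^2C-zC^*C+C^*$ of Theorem~\ref{thm:mat_poly_general} escapes the argument (its constant block $\Sigma_\alpha W_{\alpha\alpha}^*$ need not be invertible), which is exactly the phenomenon exhibited by Examples~\ref{ex:odd_dim} and~\ref{ex:even_dim}. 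The paper's proof is shorter and stays within the toolkit already set up for Theorem~\ref{thm:mat_poly_general}; yours is longer but more informative.
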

\begin{proof}
According to Proposition~\ref{prop:polynomial_solution}, \eqref{new_poly} implies the existence of vector polynomials $v(z) = v_dz^d + \ldots + v_1z+v_0$, such that 
\eq{iden1} (z^2C-C^*C+zC^*)v(z)\equiv 0. \en  Let us choose $v$ of the smallest possible degree $d$; then, in particular, $v_0, v_d\neq 0$.
Let us look at the four lowest terms of \eqref{iden1}. 
    \begin{enumerate}
        \item $z^0: C^*Cv_0=0$
        \item $z^1:C^*v_0-C^*Cv_1=0 $
        \item $z^2: Cv_0+C^*v_1 - C^*Cv_2=0$
        \item $z^3: Cv_1 + C^*v_2 - C^*Cv_3 = 0$
        \end{enumerate}
(Here, similarly to the proof of Theorem~\ref{thm:mat_poly_general}, by convention $v_j=0$ for $j>d$.)

From $z^0$ equation we obtain $Cv_0 = 0$, from $z^1$ equation we infer that \eq{cv10} Cv_1-v_0\in \ker C^*.\en  
If $d=0$, then $v_1=0$ by convention, implying $v_0\in\ker C^*$, and we are done. 

Let now $d>0$. Then $z^1$ equation implies $Cv_1\in \ker C + \ker C^*$, or, equivalently, $Cv_1\in (\im C\cap\im C^*)^\perp$. 
On the other hand,  from the $z^3$ equation we see that $Cv_1 \in \im C^*$. Since obviously $Cv_1\in\im C$, this is only possible if $Cv_1 = 0$. So, \eqref{cv10} implies $C^*v_0=0$, as in the case $d=0$. This completes the proof. 
\end{proof}

\section{Gau-Wang-Wu's conjecture for rank three operators} \label{s:gww3}
It was conjectured in Gau-Wang-Wu's paper \cite{GWW}  that, if $A\in\C^{n\times n}$ is a partial isometry with a circular numerical range $W(A)$, then the latter is necessarily centered at the origin. For brevity, we will refer to this statement as the GWW conjecture. 

The conjecture was proved in the same paper \cite{GWW} for $n\leq 4$. Due to Proposition~\ref{th:fr}, this implies the validity of the GWW conjecture for partial isometries of ranks one and two. The statement still holds for $n=5$ (see \cite{SuSWe}), as well as for any partial isometry with a one-dimensional kernel \cite{WeSp}, where the proof involves using elliptic functions; see \cite{AdCoGo25} for a more direct proof. So, the simplest case not covered by the already established results is that of the 6-by-6 partial isometries of rank three. We tackle it in this section. 

Before we begin, observe that the class of partial isometries is invariant under unitary similarities and rotations. These operations also are rank-preserving, and do not change the shape of $C(A)$, including the property of its circular component, if any, to be centered at the origin. Therefore, the following equivalence relation becomes handy.

\begin{defi}
The matrices $A$ and $B$ are {\em circularly equivalent} (denoted $A \sim B$) if there exist a unitary matrix $U$ and a real number $\theta$ such that $B = e^{i\theta}UAU^*$.
\end{defi}

\begin{prop}\label{th:cnform}A rank three $6$-by-$6$ partial isometry is circularly equivalent to the matrix
\eq{6x6canform} 
\begin{bmatrix}[c c c | c c c]  0 & 0 & 0 & \sqrt{1-a^2} & ba & da\\ 0 & 0 & 0 & 0 & v & 0\\ 0 & 0 & 0 & 0 & c & e\\ \hline 0 & 0 & 0 & a & -b\sqrt{1-a^2} & -d\sqrt{1-a^2}\\ 0 & 0 & 0 & 0 &\lambda_2 & f\\ 0 & 0 & 0 & 0 & 0 & \lambda_3 \end{bmatrix},
\en 
where $a,b,c,v\geq 0$, 
\eq{ortho}\begin{array}{c}  b d+ce+f\overline{\lambda_2}=0, \\
b^2+c^2+v^2+\abs{\lambda_2}^2=1, \\ \abs{d}^2+\abs{e}^2+\abs{f}^2+\abs{\lambda_3}^2= 1. \end{array} 
\en 
\end{prop}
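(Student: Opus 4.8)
The plan is to exploit the block reduction already available and then normalize the two $3\times3$ blocks using the residual unitary and rotational freedom. First, since $A$ has rank three and is $6$-by-$6$, $\dim\ker A=3$, so by Proposition~\ref{th:fr} I may assume $A=\begin{bmatrix}0&B\\0&C\end{bmatrix}$ with $B,C\in\C^{3\times3}$ as in \eqref{BC}. A circular equivalence by $U=U_1\oplus U_2$ together with a rotation $e^{i\theta}$ acts as $B\mapsto e^{i\theta}U_1^*BU_2$ and $C\mapsto e^{i\theta}U_2^*CU_2$; note that $U_2$ is tied to \emph{both} blocks, while $U_1$ only reshapes $B$ from the left. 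By \eqref{BC1} the partial isometry condition reads $B^*B+C^*C=I$, i.e. the $6$-by-$3$ matrix $\begin{bmatrix}B\\C\end{bmatrix}$ has orthonormal columns. A direct entrywise check shows that the three nontrivial scalar identities in $B^*B+C^*C=I$ for the target matrix are precisely \eqref{ortho}, so once the form \eqref{6x6canform} is reached the constraints follow automatically.

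For the first column I would pick a unit eigenvector $w$ of $C$, $Cw=\mu w$, and rotate by $\theta=-\arg\mu$ so the eigenvalue becomes $a:=\abs{\mu}\ge0$. Taking $w$ as the first column of $U_2$ makes the first column of $C$ equal to $(a,0,0)^{\mathsf T}$, and since $\norm{Bw}^2=1-\norm{Cw}^2=1-a^2$, choosing the first column of $U_1$ proportional to $e^{i\theta}Bw$ makes the first column of $B$ equal to $(\sqrt{1-a^2},0,0)^{\mathsf T}$. Writing the first rows of $B,C$ (minus their leading entries) as $s^*,r^*$, orthogonality of column $1$ to columns $2,3$ forces the coupling $\sqrt{1-a^2}\,s^*+a\,r^*=0$, i.e. $s^*=-\tfrac{a}{\sqrt{1-a^2}}\,r^*$.

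At this stage the remaining freedom is $U_1=1\oplus\tilde U_1$, $U_2=1\oplus\tilde U_2$ with $\tilde U_1,\tilde U_2$ unitary of size two; letting $\beta,\gamma$ be the trailing $2$-by-$2$ blocks of $B,C$, these act by $\gamma\mapsto\tilde U_2^*\gamma\tilde U_2$, $\beta\mapsto\tilde U_1^*\beta\tilde U_2$, and $r^*\mapsto r^*\tilde U_2$. I would first use $\tilde U_2$ to put $\gamma$ into Schur (upper-triangular) form $\begin{bmatrix}\lambda_2&f\\0&\lambda_3\end{bmatrix}$, fixing the phase of the Schur eigenvector so that the first entry of $r^*\tilde U_2$ is real and nonpositive, written $-b\sqrt{1-a^2}$ with $b\ge0$ (and the second entry $-d\sqrt{1-a^2}$). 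Then I would use $\tilde U_1$ to perform a QL factorization of $\beta\tilde U_2$, making $\beta$ lower-triangular $\begin{bmatrix}v&0\\c&e\end{bmatrix}$, the inherent row-phase freedom of QL allowing $v,c\ge0$. Because $s^*=-\tfrac{a}{\sqrt{1-a^2}}\,r^*$, the first row of $B$ then becomes $(ba,da)$, so the matrix is exactly \eqref{6x6canform} and \eqref{ortho} records $B^*B+C^*C=I$.

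The one genuinely delicate point is that $\tilde U_2$ is shared: it simultaneously conjugates $\gamma$, right-multiplies $\beta$, and transforms $r^*$, so I must verify that upper-triangularizing $\gamma$, forcing $b\ge0$, lower-triangularizing $\beta$, and arranging $v,c\ge0$ can all be met at once. I expect no conflict because the competing normalizations draw on independent phase freedoms—triangularity of $\gamma$ and the sign of $b$ are controlled by the choice and phase of the Schur eigenvector, whereas the signs of $v,c$ are absorbed by $\tilde U_1$ through the QL step. The degenerate cases $a\in\{0,1\}$, where $Bw$ or $Cw$ vanishes and the coupling relation must be read off directly rather than by dividing by $\sqrt{1-a^2}$, should be treated separately but are strictly easier, since they come with extra unitary freedom.
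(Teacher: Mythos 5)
Your proposal is correct and follows essentially the same route as the paper's proof: reduce to the form \eqref{BC}, rotate to make an eigenvalue of $C$ nonnegative, Schur-triangularize $C$ and normalize $B$ with the residual block-diagonal unitary and phase freedom, and read off \eqref{ortho} together with the coupled entries $(ba,da)$ versus $(-b\sqrt{1-a^2},-d\sqrt{1-a^2})$ from the orthonormality of the columns of $\bigl[\begin{smallmatrix}B\\ C\end{smallmatrix}\bigr]$. You merely reorder the normalizations (first column of both blocks, then the trailing $2$-by-$2$ blocks via Schur and QL) and are more explicit than the paper about why the shared $U_2$ causes no conflict.
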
 
\begin{proof} We may suppose that $A$ is of the form \eqref{BC}, with an upper triangular $3$-by-$3$ block $C$. Via an appropriate rotation, one of its eigenvalues can be made non-negative; this is the element $a$ in \eqref{6x6canform}. A unitary similarity of the form $U\oplus I_3$ can be used to nullify the $(2,4), (3,4)$ and $(2,6)$ entries of $A$ without changing $C$. Yet another unitary similarity, this time diagonal, can be used to make the $(1,4), (1,5), (1,6), (2,5)$ and $(3,5)$ entries of $A$ non-negative. The special form of the $(1,4), (1,5), (1,6), (4,5), (4,6)$ entries then follows from the fact that the fourth column of $A$ has length one and also is orthogonal to the fifth and sixth columns. In turn, conditions \eqref{ortho} just mean that the columns 5,6 have unit length and orthogonal to each other. \end{proof} 

\begin{thm}\label{th:GWW63}Let $A$ be a $6$-by-$6$ rank three partial isometry. If $C(A)$ contains a circle, the latter is centered at the origin.\end{thm}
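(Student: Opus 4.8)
The plan is to argue by contradiction: assuming $C(A)\supset\mathcal C_{c_0,r}$ for some center $c_0$ and radius $r>0$, I will show $c_0=0$. By Theorem~\ref{th:multeig} the center $c_0$ is automatically a defective eigenvalue of $A$, so it suffices to exclude $c_0\neq 0$. Since circular equivalence sends $\mathcal C_{c_0,r}$ to $\mathcal C_{e^{i\theta}c_0,r}$ and fixes the origin, the property ``centered at the origin'' is preserved, and I may assume $A$ is in the canonical form \eqref{6x6canform}. There the eigenvalues of $A$ are $0$ (of geometric multiplicity three, as $\dim\ker A=6-3=3$) together with the eigenvalues $a,\lambda_2,\lambda_3$ of the upper triangular block $C$. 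A short computation with $A-\mu I=\left[\begin{smallmatrix}-\mu I & B\\ 0 & C-\mu I\end{smallmatrix}\right]$, whose $(1,1)$ block is invertible for $\mu\neq 0$, shows that a nonzero eigenvalue is defective in $A$ if and only if it is defective in $C$. Hence a nonzero center $c_0$ must be a repeated eigenvalue of the $3\times 3$ block $C$ carrying a nontrivial Jordan block.

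Next I would exploit the partial isometry hypothesis in a basis adapted to $c_0$ rather than to $\ker A$. Writing $M:=A-c_0I$ in the form \eqref{BC} with its $m$-dimensional kernel equal to the $c_0$-eigenspace (so $m\ge 1$ is the geometric multiplicity of $c_0$), we have $M=\left[\begin{smallmatrix}0 & B\\ 0 & C_M\end{smallmatrix}\right]$ and $A=\left[\begin{smallmatrix}c_0 I & B\\ 0 & C_M+c_0 I\end{smallmatrix}\right]$. Requiring $A^*A$ to be an orthogonal projection and comparing the $(1,1)$ blocks of $(A^*A)^2$ and $A^*A$ forces, after dividing by $|c_0|^2\neq 0$,
\[ BB^*=(1-|c_0|^2)I_m, \]
so that $B$ is a nonzero scalar multiple of a co-isometry (in particular $|c_0|<1$), while the off-diagonal and lower blocks yield $P_{22}B^*=(1-|c_0|^2)B^*$, where $P_{22}=B^*B+(C_M+c_0I)^*(C_M+c_0I)$. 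Simultaneously, the circle hypothesis, read off from the proof of Theorem~\ref{th:multeig} applied to $M$ (cf.\ \eqref{Schur}--\eqref{Hr}), is the identity $\det(z^2C_M+zH+C_M^*)\equiv 0$ with $H=B^*B/(2r)-2rI$.

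Finally, I would feed these relations into Proposition~\ref{prop:polynomial_solution}: the identity $\det(z^2C_M+zH+C_M^*)\equiv 0$ produces a vector polynomial $v(z)=v_dz^d+\dots+v_0$ of minimal degree annihilating $z^2C_M+zH+C_M^*$, with $v_0,v_d\neq 0$. Mimicking the coefficient bookkeeping in the proof of Theorem~\ref{thm:mat_poly_general}, but now with the inhomogeneous, non-selfadjoint middle term $H=\tfrac{1-|c_0|^2}{2r}\,Q-2rI$ (where $Q=B^*B/(1-|c_0|^2)$ is the orthogonal projection onto $\mathcal R(B^*)$), I expect to track the lowest and highest powers of $z$ and, using $BB^*=(1-|c_0|^2)I_m$ together with the eigenrelation $P_{22}B^*=(1-|c_0|^2)B^*$, force the extreme coefficients $v_0,v_d$ into $\ker C_M\cap\ker C_M^*$, contradicting the Jordan structure at $c_0\neq 0$ established in the first paragraph.

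The main obstacle is precisely this last step. Unlike the radius-$1/2$, origin-centered case of Theorem~\ref{thm:mat_poly_general}, where \eqref{BC1} collapsed the middle coefficient to $-C_M^*C_M$, here $H$ retains both a multiple of $I$ and a projection term and is \emph{not} proportional to $C_M^*C_M$, so the three-term recursion among the $v_i$ no longer telescopes cleanly. I anticipate splitting the argument according to the geometric multiplicity $m\in\{1,2\}$ (bounded using $n=6$ and $\rk A=3$), with the case $m=1$ --- a single size-two Jordan block for $c_0$ in $C$ --- carrying the essential difficulty; there the co-isometry relation for $B$ must be combined with the divisibility identity to pin down $r$ and ultimately force $c_0=0$.
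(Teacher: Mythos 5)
There is a genuine gap: the decisive step of your argument is not carried out, and you say so yourself (``I expect to track\dots'', ``I anticipate splitting\dots''). Everything up to that point is sound --- the reduction to a defective eigenvalue $c_0$ of the block $C$ via Theorem~\ref{th:multeig}, and the observation that $A^*A$ being an orthogonal projection forces $BB^*=(1-|c_0|^2)I_m$ in the basis adapted to the $c_0$-eigenspace, is correct and is a nice structural fact. But the whole burden of the proof lies in extracting a contradiction from $\det(z^2C_M+zH+C_M^*)\equiv 0$ when $H=B^*B/(2r)-2rI$ is \emph{not} proportional to $C_M^*C_M$, and you correctly identify that the telescoping used in Theorem~\ref{thm:mat_poly_general} breaks down there. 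Without that step the proof does not exist. Moreover, even the target you are aiming at is not obviously a contradiction: concluding $v_0,v_d\in\ker C_M\cap\ker C_M^*$ would only show that $A$ has a $2$-by-$2$ reducing summand of the form $\left[\begin{smallmatrix}c_0&\beta\\ 0&c_0\end{smallmatrix}\right]$, which does not by itself contradict ``$c_0$ carries a nontrivial Jordan block in $C$''; you would still need the extra observation that such a summand of a partial isometry with $c_0\neq 0$ is either unitary (hence normal, $\beta=0$) or rank one (hence $c_0=0$), so no circle can arise from it. That step is absent.

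For contrast, the paper takes a much more computational and shorter route: it puts $A$ in the canonical form \eqref{6x6canform} with $a>0$ and $\lambda_2=a$ (forced by defectiveness), expands the Kippenhahn polynomial of $A-aI$ in the linearly independent functions $\cos^i\theta\sin^j\theta$ ($i\le 4$, $j\le 1$), and reads off three specific coefficients: $A_{31}=\im g$ and $A_{40}=a-\re g$ give $g=a$, after which the orthogonality relation \eqref{ortho} collapses $A_{30}$ to $r^2a$, yielding $a=0$ directly. No polynomial matrix identity or vector polynomial bookkeeping is needed. If you want to salvage your approach, the co-isometry relation $BB^*=(1-|c_0|^2)I_m$ is worth keeping, but you must actually run the recursion for the $v_i$ with the inhomogeneous $H$ --- and that is precisely where your proposal stops.
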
 
\begin{proof}Suppose otherwise, i.e., $C(A)$ contains a circle centered at $a\neq 0$.  Without loss of generality, $a>0$ (which can be achieved by an appropriate rotation of $A$),  the matrix $A$ equals  \eqref{6x6canform} and, according to Theorem~\ref{th:multeig}, $\lambda_2=a$. Relabeling $\lambda_3=g$, one gets
\[ A-aI= \begin{bmatrix}[c c c | c c c]  -a & 0 & 0 & \sqrt{1-a^2} & ba & da\\ 0 & -a & 0 & 0 & v & 0\\ 0 & 0 & -a & 0 & c & e\\ \hline 0 & 0 & 0 & 0 & -b\sqrt{1-a^2} & -d\sqrt{1-a^2}\\ 0 & 0 & 0 & 0 & 0 & f\\ 0 & 0 & 0 & 0 & 0 & g-a \end{bmatrix}. \]
Consider the Kippenhahn polynomial of $X = A - aI$:
$$P_X(\theta, \lambda) = \sum_{i=1}^4\sum_{j=0}^1 \cos^i(\theta)\sin^j(\theta) A_{ij}(\lambda),$$
where $A_{ij}(\lambda)$ do not depend on $\theta$. Observe that the functions $\cos^i(\theta) \sin^j(\theta)$ for $i = 0, 1, 2, 3,4 $ and $j = 0, 1$ are linearly independent. 
Therefore, if $\lambda= r$ is the radius of the circle contained in $C(A)$, then $A_{ij}(r) = 0$ for all $i,j$. In particular, $A_{30}=A_{31}=A_{40}=0$. On the other hand, direct computations show that 
$A_{31}=\im g, A_{40}=a-\re g$. So, $g=a >0$. With this in mind, 
$A_{30}$ simplifies to
$$A_{30}= r^2a - \frac{1}{4}[a|f|^2 + b\re(\overline{d}f ) + c\re(\overline{e}f )].$$
But the first equality in \eqref{ortho}, with $\lambda_2=a$ taken into consideration, implies that 
$a\abs{f}^2 + b\re(\overline{d}f) + c\re(\overline{e}f)=0$. So, $A_{30} = r^2a = 0$.
As we assumed $r > 0$, it follows that $a=0$ --- a contradiction.
\end{proof}
When combined with Proposition~\ref{th:fr}, Theorem~\ref{th:GWW63} implies that the GWW conjecture holds for partial isometries of rank three independent of their size. On the other hand, the rank four case, while covered for $n=5$ by the results of \cite{SuSWe, WeSp}, remains open starting with $n=6$.

\section{Kippenhahn curves containing circular components} \label{s:circgen}
Here we give a complete characterization of the rank three partial isometries with Kippenhahn curves containing circular components. Our main tool is Proposition~\ref{th:critcirc}. Observe therefore that in our setting the Kippenhahn polynomial \eqref{chpo} of $A$ has the form
$$ P_A(\theta, \lambda) = \sum_{i=0}^2\sum_{j=0}^2 \cos^i(\theta) \sin^j (\theta)A_{ij},$$
where the coefficients $A_{ij}$ do not depend on $\theta$. Formulas for $A_{ij}$ (obtained with the use of SageMath) are rather cumbersome, and we will be providing their explicit form only as needed. 
\begin{prop} \label{two_eigenvalues_case_pro}  Assume that $A\in \C^{6\times 6}$ is a rank-three  partial isometry such that $C(A)$ contains a circle. Then 
\eq{6x60}A \sim \begin{bmatrix}
0 & 0 & 0 & 1 & 0 & 0\\
0 & 0 & 0 & 0 & b & 0\\
0 & 0 & 0 & 0 & c & e\\
0 & 0 & 0 & 0 & d & f\\
0 & 0 & 0 & 0 & a & g\\
0 & 0 & 0 & 0 & 0 & h
\end{bmatrix}, \en
where $a,b,c,d,e\geq 0$, and $ah=0$ or $d=f=0$. 
\end{prop}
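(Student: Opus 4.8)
The plan is to bring $A$ to the normal form \eqref{6x60} first, and then read off the dichotomy from a single coefficient of the polynomial governing the circle. By Proposition~\ref{th:fr} I may assume $A$ is of the form \eqref{BC} with $B,C\in\C^{3\times 3}$, $B^*B+C^*C=I$, and $C$ upper triangular (Schur). Theorem~\ref{th:GWW63} puts the circle at the origin, whence Theorem~\ref{th:multeig} makes $0$ a defective eigenvalue and $\det C=0$. Ordering the triangularization so that the zero eigenvalue occupies the $(1,1)$ entry of $C$ forces the first column of $C$ to vanish. Then the fourth column of $A$ has zero lower half and equals $Be_1$ in its upper half; being a column of a partial isometry it is a unit vector, so a block unitary $U\oplus I_3$ rotates it to $e_1$ without disturbing $C$, and orthogonality of the fifth and sixth columns to $e_1$ annihilates the entries $B_{12},B_{13}$. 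Row unitaries on rows $2,3$ (which leave $C$ untouched and fix $e_1$) then reduce the lower $2\times 2$ block of $B$ to lower-triangular form, one rotation makes the eigenvalue $a$ nonnegative, and a diagonal phase normalization (which preserves the triangular shape and vanishing first column of $C$, and fixes the diagonal entry $a$) makes $a,b,c,d,e\ge 0$. I expect this bookkeeping to be the fiddliest part, since the operations cleaning up $B$ must be chosen so as not to destroy the structure of $C$.

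Next I would translate the circle condition into a polynomial identity, reusing the computation inside the proof of Theorem~\ref{th:multeig}: for the center $0$, the inclusion $C(A)\supset\mathcal C_r$ is equivalent to $\det P(z)\equiv 0$, where $P(z):=z^2C+zH+C^*$ and, since $B^*B=I-C^*C$ for a partial isometry, $H=(\tfrac1{2r}-2r)I-\tfrac1{2r}C^*C$. The vanishing first column of $C$ forces the first row of $C^*$ to vanish and both the first row and the first column of $C^*C$ (hence of $H$, away from the $(1,1)$ entry) to vanish, so the first row of $P(z)$ equals $\big(\tfrac{1-4r^2}{2r}z,\ z^2d,\ z^2f\big)$ while $P_{21}=\bar d$ and $P_{31}=\bar f$.

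The key step is a degree count in $\det P$. A monomial of degree $5$ requires two factors $z^2$ (each from the $C$-part, hence from a weakly above-diagonal position) and one factor $z$ (from $H$). Since $C_{11}=0$ caps the $(1,1)$ position at degree one, and the vanishing first column of $C$ and of $H$ caps the positions $(2,1),(3,1)$ at degree zero, a term-by-term check of the six permutations shows that only the identity reaches degree five, with $[z^5]\det P=(\tfrac1{2r}-2r)\,ah=\tfrac{1-4r^2}{2r}\,ah$. Hence $(1-4r^2)\,ah=0$, so $ah=0$ whenever $r\neq\tfrac12$.

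It remains to handle $r=\tfrac12$, where $H=-C^*C$ and the first row of $P$ becomes $(0,\,z^2d,\,z^2f)$, so that $\det P=z^2\big(-d\,(P_{21}P_{33}-P_{23}P_{31})+f\,(P_{21}P_{32}-P_{22}P_{31})\big)$ with $P_{21}=\bar d$, $P_{31}=\bar f$. The identity $\det P\equiv 0$ forces the bracket to vanish identically; computing its coefficient of $z^1$ and simplifying (using $a\in\R$) collapses it to $|dg-af|^2+|dh|^2$. Its vanishing yields $dh=0$ and $dg=af$, so if $ah\neq 0$ then $d=0$ and hence $f=0$, i.e.\ $d=f=0$. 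Thus in all cases $ah=0$ or $d=f=0$. The one genuinely delicate point is recognizing that $z^1$-coefficient as a sum of two squares; the remaining determinant expansions are short.
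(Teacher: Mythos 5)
Your proof is correct, and while its skeleton coincides with the paper's (normal form via Theorem~\ref{th:GWW63} and Theorem~\ref{th:multeig}, then ``$ah=0$ or $r=1/2$'', then handling $r=1/2$), the analytic core is carried out by a genuinely different and more self-contained route. Where the paper reads off the dichotomy from the SageMath-computed coefficients $A_{20},A_{11}$ of the Kippenhahn polynomial, you extract the same condition $(1-4r^2)ah=0$ as the $z^5$-coefficient of $\det(z^2C+zH+C^*)\equiv 0$ via a short degree count over the six permutations (using that the first column of $C$, hence the first row and column of $C^*C$, vanish) --- I checked the count and the only degree-five contribution indeed comes from the identity permutation. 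For $r=1/2$ the paper invokes the kernel-intersection criterion $\ker C\cap\ker C^*\neq\{0\}$ from (the proof of) Theorem~\ref{thm:mat_poly_general}, whereas you expand the same determinant along its first row and identify the $z^1$-coefficient of the resulting bracket as $\abs{dg-af}^2+\abs{dh}^2$ (valid since $a\in\R$), which immediately gives $dh=0$ and $dg=af$, hence $d=f=0$ when $ah\neq 0$; this computation is correct. What your approach buys is a fully hand-verifiable argument that avoids both computer algebra and the auxiliary structure theorem; what the paper's approach buys is that the appeal to Theorem~\ref{thm:mat_poly_general} simultaneously delivers the stronger conclusion (a $J_2$ summand splits off), which is reused later, and the coefficients $A_{ij}$ are needed elsewhere anyway. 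One minor bookkeeping point: you perform the rotation making $a\geq 0$ after normalizing the $(1,4)$ entry to $1$, so that entry temporarily acquires a phase; your final diagonal similarity does restore it, as you implicitly use, but it is worth stating that the $(1,4)$ entry is among those being renormalized.
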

Note that the ``or'' above is non-exclusive. 
\begin{proof}
Suppose that $C(A)$ contains a circle. By Theorem~\ref{th:GWW63}, this circle is centered at the origin, and by Theorem~\ref{th:multeig}, zero is then a defective eigenvalue of $A$. In other words, the block $C$ of its representation \eqref{BC} is singular. Consequently (with a slight abuse of notation), \eqref{6x6canform} becomes \eqref{6x60}. As in the proof of Proposition~\ref{th:cnform}, the non-negativity of $a$ may be achieved by a rotation, and that of $b,c,d,e$ --- by an appropriate diagonal similarity. 

Now, let the circle contained in $C(A)$ have radius $r >0$. Then all $A_{ij}(r)$ should equal zero by Proposition~\ref{th:critcirc}. Direct (computer assisted) computations show that, in particular, 
\[ A_{20} = ar^2\re(h)(r^2-1/4), \quad A_{11} = ar^2\im(h)(r^2-1/4). \]
So, $ah(r^2-1/4)=0$, and thus, either $ah=0$ or $r=1/2$. 

In the latter case invoke Theorem~\ref{thm:mat_poly_general} (or rather its proof), according to which for rank $k$ matrices $A$ satisfying \eqref{cont1/2},  \eqref{k/2cond} containment \eqref{BC1} is only possible if \eqref{nz} holds. Since $k=3$ and $C$ is singular, condition \eqref{k/2cond} holds automatically. If $ah\neq 0$, then $\ker C$ is the span of the first coordinate vector and, in order for it to have a non-trivial intersection (in this particular case, same as to coincide) with $\ker C^*$, it is necessary and sufficient that $d=f=0$.
\end{proof}
In what follows, it is convenient to distinguish the cases based on the defect of the zero eigenvalue of $A$. For convenience of reference, we will denote it by $\defe A$ and, for brevity,  call it the {\em defect of} $A$. 

According to Theorem~\ref{th:multeig}, $C(A)$ contains no circles if $\defe A=0$, i.e., zero is a regular eigenvalue of $A$. So, let us move to the case $\defe A=1$. 
\begin{thm}\label{th:def1}
Let $A$ be a rank three partial isometry with $\defe A=1$. Then $C(A)$ contains a circle if and only if $A$ is unitarily reducible, with one of the blocks equal $J_2$. If this is the case, then this circle has radius $1/2$ and there are no other circles contained in $C(A)$. In particular, $W(A)$ is not a circular disk.\end{thm}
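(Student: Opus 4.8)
The plan is to extract everything from the canonical form~\eqref{6x60} provided by Proposition~\ref{two_eigenvalues_case_pro}, so that the proof reduces to elementary bookkeeping together with one application each of Theorems~\ref{th:multeig}, \ref{th:GWW63} and~\ref{th:cit1/2}. The implication ``$A$ reducible with a $J_2$ block $\Rightarrow C(A)$ contains a circle'' is immediate from~\eqref{cont1/2}, and the circle it produces is exactly $\mathcal C_{1/2}$. For the converse, assume $C(A)$ contains a circle; then Proposition~\ref{two_eigenvalues_case_pro} puts $A$, up to circular equivalence, into the form~\eqref{6x60} with $ah=0$ or $d=f=0$. Since circular equivalence preserves the defect of the zero eigenvalue, I may work with this representation throughout.

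First I would settle the defect bookkeeping. In~\eqref{6x60} the lower block $C$ is upper triangular with diagonal entries $0,a,h$, so $A$ has characteristic polynomial $(-\lambda)^4(a-\lambda)(h-\lambda)$ while the geometric multiplicity of $0$ is $3$; hence $\defe A=1$ is equivalent to $a\neq 0$ and $h\neq 0$. Thus $ah\neq 0$, and the dichotomy in Proposition~\ref{two_eigenvalues_case_pro} forces $d=f=0$. With $d=f=0$ one checks directly that $\ker C=\ker C^*=\Span(e_1)$, so $\ker C\cap\ker C^*\neq\{0\}$ and the reduction used in the proof of Theorem~\ref{thm:mat_poly_general} applies; equivalently, the span of the first and fourth coordinate vectors is a reducing subspace on which $A$ acts as $J_2$. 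This gives $A\sim J_2\oplus A_0$, where $A_0$ is the explicit $4\times 4$ rank-two partial isometry left after removing the $J_2$ block (and, since a unimodular multiple of $J_2$ is unitarily similar to $J_2$, the rotation hidden in the circular equivalence does not destroy the $J_2$ summand).

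It remains to prove that $\mathcal C_{1/2}$ is the only circle in $C(A)$ and that $W(A)$ is not a disk. As $C(A)=\mathcal C_{1/2}\cup C(A_0)$, any other circle $\mathcal C'\subseteq C(A)$ is, by Theorem~\ref{th:GWW63}, centered at the origin; having radius $\neq 1/2$ it is disjoint from $\mathcal C_{1/2}$ and hence lies in $C(A_0)$. But then Theorem~\ref{th:multeig}, applied to $A_0$, says its center $0$ is a defective eigenvalue of $A_0$, whereas $A_0$ has rank two on a four-dimensional space with $a,h\neq 0$, so $0$ has algebraic and geometric multiplicity both equal to $2$, i.e. $\defe A_0=0$ --- a contradiction. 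Hence $\mathcal C_{1/2}$ is the unique circle, of radius $1/2$. Finally, if $W(A)$ were a disk its boundary would be a circle in $C(A)$, forcing $\partial W(A)=\mathcal C_{1/2}$, so $W(A)=\{z\colon\abs{z}\le 1/2\}$ and $w(A)=1/2$; Theorem~\ref{th:cit1/2} would then exhibit $A$ as a direct sum of copies of $J_2$ and a zero block, whose spectrum is $\{0\}$, contradicting $a\in\sigma(A)$ with $a\neq 0$.

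The routine ingredients --- the form~\eqref{6x60} and the $J_2$ splitting --- are already delivered by Proposition~\ref{two_eigenvalues_case_pro} and the proof of Theorem~\ref{thm:mat_poly_general}, so the only genuinely load-bearing step is the uniqueness of the circle. The tempting route there is to classify the defective eigenvalues of $A_0$ by hand, which becomes delicate in the degenerate case $a=h$; the step I expect to matter most is to sidestep that analysis altogether by noting that any circle in $C(A_0)\subseteq C(A)$ must simultaneously be centered at the origin (Theorem~\ref{th:GWW63}) and at a defective eigenvalue of $A_0$ (Theorem~\ref{th:multeig}), which is impossible since $\defe A_0=0$.
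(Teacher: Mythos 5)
Your proposal is correct and follows essentially the same route as the paper: necessity via Proposition~\ref{two_eigenvalues_case_pro} (with $\defe A=1\Leftrightarrow ah\neq 0$ forcing $d=f=0$ and hence the $J_2$ splitting), uniqueness of the circle from $\defe A_0=0$ together with Theorem~\ref{th:multeig}, and non-circularity of $W(A)$ from Theorem~\ref{th:cit1/2}. The extra details you supply (the explicit defect bookkeeping, the remark that a unimodular multiple of $J_2$ is unitarily similar to $J_2$) are correct refinements of what the paper leaves implicit.
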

\begin{proof}
Sufficiency is obvious, while necessity follows from Proposition~\ref{two_eigenvalues_case_pro}. Indeed, $\defe A=1$ if and only if in \eqref{6x60} $ah\neq 0$.

Furthermore, if $A$ is unitarily similar to $J_2\oplus B$, then $\defe B=0$, so there are no circles in $C(B)$. Consequently,  $\mathcal C_{1/2}$ is the only circle in $C(A)$. 

Finally, suppose $W(A)$ is a circular disk. Then its boundary should coincide with the unique circular component of $C(A)$. In other words, we should have $w(A)=1/2$. But this is in contradiction with Theorem~\ref{th:cit1/2}.
\end{proof}

Let now $\defe A\geq 2$, i.e. $ah=0$ in its canonical form \eqref{6x60}. Without loss of generality, we may suppose that $a=0$, so that $A$ is circularly equivalent to 
\eq{6x61} \begin{bmatrix}
0 & 0 & 0 & 1 & 0 & 0\\
0 & 0 & 0 & 0 & b & 0\\
0 & 0 & 0 & 0 & c & e\\
0 & 0 & 0 & 0 & d & f\\
0 & 0 & 0 & 0 & 0 & g\\
0 & 0 & 0 & 0 & 0 & h
\end{bmatrix}. \en 

Observe that, if $e=0$, then an appropriate unitary transformation involving rows/columns 2,3 only allows to replace $b$ by $\sqrt{b^2+c^2}$ and $c$ by zero. So, there is no need to consider situations where $c\neq 0=e$.  We therefore accept the following

{\sl Convention.} {\em In what follows, whenever $A$ has the form (5.2) and $e=0$ we assume that $c=0$ as well.}

Further simplifications concern the signs of the non-zero entries of $A$. 
\begin{prop} \label{th:2can}  Assume that $A\in \C^{6\times 6}$ is a rank-three  partial isometry with $\defe A\geq 2$ and such that $C(A)$ contains a circle. Then $A$ is circularly equivalent to \eqref{6x61} with 
\eq{elts} b,c,d,e,h\geq 0, g\in\R, \text{ and } f\geq 0 \text{ if } ce=0\text{ and } f\leq 0 \text{ otherwise}, \en 
\eq{length}b^2+c^2+d^2=1, \quad e^2+f^2+g^2+h^2=1.\en 
\end{prop}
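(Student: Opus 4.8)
The plan is to start from the reduction already achieved in Proposition~\ref{two_eigenvalues_case_pro}. Since $\defe A\ge2$ we may take $a=0$, so that $A$ is circularly equivalent to \eqref{6x61} with $b,c,d,e\ge0$; writing it in the block form \eqref{BC}, the relevant blocks are
\[
B=\begin{bmatrix}1&0&0\\0&b&0\\0&c&e\end{bmatrix},\qquad
C=\begin{bmatrix}0&d&f\\0&0&g\\0&0&h\end{bmatrix}.
\]
The norm identities \eqref{length} and the orthogonality relation they rest on come directly from the defining condition \eqref{BC1}: reading off the $(2,2)$, $(3,3)$, and $(2,3)$ entries of $B^*B+C^*C=I$ yields $b^2+c^2+d^2=1$, $\;e^2+\abs{f}^2+\abs{g}^2+\abs{h}^2=1$, and $ce+df=0$. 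The first is the left identity in \eqref{length}, and the second becomes the right one once $f,g,h$ are shown to be real.

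The sign of $f$ is then forced by the orthogonality relation $ce+df=0$ together with $c,d,e\ge0$, which gives $df=-ce\le0$. If $ce\ne0$ then $d\ne0$ and $f=-ce/d$ is real and negative, matching the case $f\le0$ in \eqref{elts}; if $ce=0$ and $d\ne0$ the same relation forces $f=0$. The only situation left open is $ce=0=d$, which is deferred to the normalization step below.

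To fix the signs of the remaining entries I would combine a rotation with a diagonal unitary similarity $D=\diag(e^{i\phi_1},\ldots,e^{i\phi_6})$. The rotation $A\mapsto e^{i\theta}A$ with $\theta=-\arg h$ makes $h\ge0$, and $h$, being the diagonal entry in position $(6,6)$, is left unchanged by every subsequent $D$. The phase conditions required to restore $b,c,d,e\ge0$ amount to trivializing the arguments of the entries $(1,4),(2,5),(3,5),(3,6),(4,5)$; these edges form an acyclic graph (a spanning tree on the six indices when $b,c,d,e$ are all positive), so the conditions are consistent and determine $\phi_2,\ldots,\phi_6$ in terms of $\phi_1$ and $\theta$. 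In the deferred case $d=ce=0$ this graph disconnects the pair $\{1,4\}$, and the residual phase freedom is exactly what is needed to also make $f\ge0$.

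The crux is to prove $g\in\R$, and here normalization is of no further help: keeping $b,c,d,e$ real forces $\phi_5=\phi_6$, so the admissible similarities leave $\arg g$ (equivalently $\im(g\overline h)$) untouched, and $g$ cannot be rotated onto the real axis by hand. The hypothesis is what supplies the missing identity. By Theorem~\ref{th:GWW63} the circle in $C(A)$ is centered at $0$, of some radius $r>0$, so Proposition~\ref{th:critcirc} forces every coefficient $A_{ij}(r)$ of the Kippenhahn polynomial to vanish. I would isolate the coefficient carrying the imaginary part and, after substituting \eqref{length} and $ce+df=0$, reduce the equation $A_{ij}(r)=0$ to a nonzero multiple of $\im(g\overline h)$; since $h\ge0$ this yields $\im g=0$ whenever $h>0$, while the case $h=0$ is handled by the extra rotational freedom noted above. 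I expect the main obstacle to be precisely this last step: identifying, among the many relations $A_{ij}(r)=0$, the one governing $\im(g\overline h)$ and checking (with the aid of a computer algebra system) that its cofactor is not identically zero, so that it genuinely forces $g$ to be real rather than collapsing to a degenerate identity.
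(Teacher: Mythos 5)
Your overall route coincides with the paper's: reduce to \eqref{6x61} via Proposition~\ref{two_eigenvalues_case_pro}, read \eqref{length} and the relation $ce+df=0$ off the $(2,2)$, $(3,3)$, $(2,3)$ entries of $B^*B+C^*C=I$, fix signs by a rotation combined with a diagonal unitary similarity, and extract $g\in\R$ from a vanishing coefficient of the Kippenhahn polynomial. The identities, the sign analysis of $f$, and the phase-graph bookkeeping are all sound and match what the paper does (the paper is just terser about the graph).

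The gap is in the final step, exactly where you predicted trouble, and it is a genuine one. The coefficient carrying the imaginary part is $A_{01}=-ce\,r\,\im g/16$: its cofactor is $ce$, not something ``not identically zero,'' so the equation $A_{01}(r)=0$ gives no information when $ce=0$. This is unavoidable, and your own graph argument explains why: when $ce=0$ the edge joining index $3$ to index $5$ (or to $6$) disappears, and since $df=0$ prevents indices $5$ and $6$ from being reconnected through index $4$, they lie in different components of the phase graph; hence $\arg g$ is a gauge parameter, and no unitarily invariant identity such as $A_{ij}(r)=0$ could ever force $\im g=0$ there. The correct dichotomy is therefore $ce\neq 0$ (the Kippenhahn coefficient pins $g$ to $\R$) versus $ce=0$ (a residual diagonal similarity rotates $g$ onto $\R$), not $h>0$ versus $h=0$ as you propose. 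As written, the subcase $ce=0$, $h>0$, $g\notin\R$ falls through both of your mechanisms: the global rotation is already spent making $h\geq 0$, and the coefficient equation degenerates to $0=0$. The repair is the one the paper uses: when $ce=0$ (hence, by the Convention, $c=0$ and then $df=0$), choose a diagonal unitary supported on the phase-graph component containing index $6$ (or the one containing index $5$) to adjust $\arg g$ while leaving every other entry of \eqref{6x61} intact.
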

\begin{proof}
With $a=0$, \eqref{6x60} becomes \eqref{6x61}, while non-negativity of $h$ can be achieved by an appropriate rotation. The non-negativity of $b,c,d,e$ can be arranged after that along the same lines as in the proof of Proposition~\ref{two_eigenvalues_case_pro}.

Moving on, from the orthogonality of the columns of $A$:
\eq{cdef} ce+df=0, \en 
and, since $ce,d\geq 0$, either $ce=0$ or $d>0, f\leq 0$. In the former case, a diagonal unitary similarity can be used to adjst the argument of $f$ so it becomes real (or even non-negative, if we wish) preserving the non-negativity of $b,c,d,e$. Either way, the requirement $f\in\R$ can be met.

We now invoke the Kippenhahn polynomial again, this time observing that for the matrix \eqref{6x61} $A_{01}=-cer\im g/16$. So, either $ce=0$ or $g$ is real. In the latter case, the proof is complete.
In the former case, due to Convention $c=0$, while from \eqref{cdef} we conclude that also $df=0$. So, we need to consider two possibilities: $c=d=0$ and $c=f=0$. Either way,  there exists a diagonal unitary similarity preserving all the elements of \eqref{6x61} but $g$ and adjusting the argument of the latter so it becomes real. 

For $c=f=0$, let $V=\diag [1,1,,\overline{v},,1,1,v]$, where $\abs{v}=1$ and the argument of $v$ chosen so that $vg\geq 0$. A direct computation shows then that
\[ VAV^* =\begin{bmatrix}
0 & 0 & 0 & 1 & 0 & 0\\
0 & 0 & 0 & 0 & b & 0\\
0 & 0 & 0 & 0 & 0 & e\\
0 & 0 & 0 & 0 & d & 0\\
0 & 0 & 0 & 0 & 0 & vg\\
0 & 0 & 0 & 0 & 0 & h
\end{bmatrix},\]
and we are done. The case $c=d=0$ can be treated similarly.

Finally, \eqref{length} is nothing but the observation that the unit lengths of the columns of $A$ is preserved under circular equivalence. 
\end{proof}

Before proceeding further, let us make a general observation about matrices \eqref{6x61} satisfying \eqref{elts}--\eqref{cdef} not supposing a priori that its Kippenhahn curve contains a circle. Recall that for any matrix $A$ its {\em numerical radius} $w(A)$ is defined as \[ w(A)=\max \{ \abs{z}\colon z\in W(A)\}. \] 

\begin{prop}\label{th:gen6x61} Let $A$ be unitarily similar to \eqref{6x61}. If conditions \eqref{elts}--\eqref{cdef} hold, then:  {\em (i)} $C(A)$ (and therefore $W(A)$) is symmetric about the real axis; {\em (ii)} $w(A)$ is attained at the unique point lying on the real line, or $W(A)$ is a circular disk centered at the origin.  \end{prop}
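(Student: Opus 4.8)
The plan is to exploit a single concrete symmetry of the matrix. For part (i), I would show that $A$ is unitarily similar to $\overline{A}$ (entrywise conjugate) via a real diagonal $\pm1$ signature matrix, which forces $C(A)$ to be invariant under complex conjugation, i.e.\ symmetric about the real axis. Concretely, conditions \eqref{elts}--\eqref{cdef} have been arranged so that every entry of \eqref{6x61} is real except possibly $f$ (which is real by \eqref{elts}) and the complex entries among $e,g,h$ — but in fact \eqref{elts} already makes $e,h,g$ real and only the genuinely complex entries remain. The key point is that $W(\overline{A})=\overline{W(A)}$ and likewise $C(\overline{A})=\overline{C(A)}$ always, so it suffices to produce a unitary $U$ with $U A U^* = \overline{A}$. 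I would look for $U=\diag[\varepsilon_1,\dots,\varepsilon_6]$ with $\varepsilon_j=\pm1$: conjugation by such $U$ flips the sign of the $(j,k)$ entry when $\varepsilon_j\varepsilon_k=-1$ and leaves it otherwise, so the task reduces to checking that the sign pattern needed to turn each entry into its conjugate (which only matters for entries whose imaginary part is nonzero) is consistent along the support of $A$. The Kippenhahn-curve consequence is then immediate: $P_A(\lambda,\theta)=P_{\overline A}(\lambda,-\theta)$ gives $C(A)=\overline{C(A)}$.

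For part (ii), I would argue as follows. By part (i), $W(A)$ is a compact convex set symmetric about $\R$, and $w(A)$ is the maximal modulus attained on $\partial W(A)$. Because of the symmetry, the set of points of $W(A)$ of maximal modulus is itself symmetric about the real axis. If this maximizing set is not a single real point, then by convexity and symmetry it must contain two conjugate points $z_0,\overline{z_0}$ with $\operatorname{Im} z_0\neq 0$, and the whole circular arc of radius $w(A)$ between them must lie in $\partial W(A)$ (since $W(A)\subseteq\{|z|\le w(A)\}$ and any boundary point of maximal modulus forces the supporting line to be tangent to the circle, hence the boundary hugs the circle wherever it touches it at more than isolated points). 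The plan is to show that once $\partial W(A)$ contains an arc of $\mathcal C_{w(A)}$, it must contain the whole circle and $W(A)=\mathcal C_{0,w(A)}$. This is exactly where Theorem~\ref{th:Wu} (equivalently the counting observation preceding it) enters: an arc carrying enough boundary points of $W(A)$ on a circle of radius $r=w(A)$ centered at $a=0$ forces $W(A)$ to be the full disk, with $0$ a defective eigenvalue.

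The main obstacle, and the step I expect to require the most care, is the dichotomy in part (ii): ruling out the possibility that $w(A)$ is attained at finitely many off-axis points without $W(A)$ being a full disk. The symmetry alone guarantees such points come in conjugate pairs, but I must convert ``a boundary point of maximal modulus'' into ``a genuine arc of the circle lies in the boundary.'' The clean way is to note that a point $z_0\in\partial W(A)$ with $|z_0|=w(A)$ has the line $\{z:\re(\overline{z_0}z)=|z_0|^2\}$ as a supporting line, so $z_0$ is the unique point of $W(A)$ on that line unless the boundary is locally the circle; combined with the fact that extreme points of maximal modulus on a convex set inscribed in a disk can only accumulate into arcs, the symmetric pair $z_0,\overline{z_0}$ together with the origin being a focus forces either exactly one real maximizer or a full circular boundary. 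I would invoke Theorem~\ref{th:Wu} with $m>0$ (available since $0$ is then defective by Theorem~\ref{th:multeig}) to close the ``full disk'' branch, so that the only remaining alternative is the unique real maximizer, giving the stated dichotomy. The bookkeeping of which entries of \eqref{6x61} are actually non-real under \eqref{elts}--\eqref{cdef}, and verifying the $\pm1$ signature in part (i) is consistent, is routine but must be done explicitly.
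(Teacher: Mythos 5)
Part (i) of your proposal is fine, and in fact easier than you make it: under \eqref{elts}--\eqref{cdef} every entry of \eqref{6x61} is already real ($b,c,d,e,h\geq 0$, $g\in\R$, and $f$ real of a prescribed sign), so no signature conjugation is needed; the paper simply observes that for a real matrix $(A^*)^T=A$ turns $P_A(\lambda,\theta)$ into $P_A(\lambda,-\theta)$. Your route is essentially the same.

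Part (ii) has a genuine gap. The pivotal step --- ``once $w(A)$ is attained at a conjugate pair $z_0,\overline{z_0}$ off the real axis, a whole arc of $\mathcal C_{w(A)}$ must lie in $\partial W(A)$'' --- is false for general convex sets symmetric about $\R$, and even for numerical ranges of real matrices. The supporting line at a point of maximal modulus is indeed tangent to the circumscribed circle, but that does not force the boundary to hug the circle near that point: the convex set can touch the circle at isolated points. Concretely, $\left[\begin{smallmatrix}0&1\\-1&0\end{smallmatrix}\right]\oplus[0]$ is real, its numerical range is the triangle $\conv\{i,-i,0\}$, symmetric about $\R$, with $w(A)=1$ attained exactly at the two conjugate points $\pm i$, and it is not a disk. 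So symmetry plus convexity alone cannot yield the dichotomy, and Theorem~\ref{th:Wu} is out of reach because two boundary points on the circle fall far short of the required $n-m+1$. The dichotomy in (ii) is a consequence of the \emph{specific} structure of the matrices \eqref{6x61} under \eqref{elts}--\eqref{cdef}: their Kippenhahn polynomial collapses to $P(\lambda,\theta)=-\lambda p(\lambda^2)\cos\theta+q(\lambda^2)$, i.e.\ it is affine in $\cos\theta$ with no $\sin\theta$ term. The paper's proof exploits exactly this: if $p(\rho_0)\neq 0$ at the maximizing $\rho_0$, monotonicity in $\cos\theta$ forces $\cos\theta_0=\pm1$ (a unique real maximizer), while $p(\rho_0)=0$ makes $P(\lambda_0,\cdot)$ constant in $\theta$, giving the circular disk. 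Your argument never uses this computation, and without it the statement you are trying to prove is simply not true at that level of generality.
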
 
\begin{proof} (i) The symmetry of $C(A)$ about the real axis actually holds for any real matrix, and follows easily by taking the adjoint and transposed in \eqref{chpo} and observing that for real matrices $(A^*)^T=A$ while $\theta$ is being replaced by $-\theta$. The respective result for $W(A)$ is well known, and its less obvious refinement can be found, e.g. in \cite{LiTamWu02}.

In our particular setting  a direct computation using  SageMath reveals that a simple  explicit expression for the Kippenhahn polynomial is available, namely: \eq{chpospe}  P(\lambda,\theta)= -\lambda p(\rho)\cos\theta+q(\rho), \en
where $\rho=\lambda^2$ and
\eq{pqrho1}
\begin{cases}
p(\rho)= h \rho^2 - \frac{1}{2} h \rho + \frac{1}{16}((b^2+c^2)h-ceg),  \\
q(\rho)= \rho^3 - \frac{1}{4}(3-h^2)\rho^2 + \frac{1}{16}(c^2+b^2+e^2+1-h^2)\rho - \frac{1}{64} b^2 e^2.    
\end{cases}
\en 
In agreement with (i), \eqref{chpospe} is an even function of $\theta$. Its especially simple form is instrumental in proving the second part of this proposition.

(ii) Let $\theta_0$ be (a priori, one of)  the value(s) of $\theta$ for which the largest root $\lambda_0$ of \eqref{chpospe} attains its maximal value $w(A)$. Set $\rho_0=\lambda_0^2$. If $p(\rho_0)>0$, suppose that $\cos\theta_0\neq 1$.  Then, choosing $\theta_1$ such that $\cos\theta_1>\cos\theta_0$ we have 
$P(\lambda_0,\theta_1)=-\lambda_0p(\rho_0)\cos\theta_1+q(\rho_0)<0$. 
But then $P(\lambda_1,\theta_1)=0$ for some $\lambda_1>\lambda_0$ which is a contradiction with the choice of $\lambda_0$. So, $p(\rho_0)>0$ implies $\cos\theta_0=1$, and the numerical radius of $A$ is attained at the right endpoint of the intersection of $W(A)$ with the real line and only there. 

Similarly, $p(\rho_0)<0$ implies $\cos\theta_0=-1$, and the numerical radius of $A$ is attained at the left endpoint of the intersection of $W(A)$ with the real line and only there. 

In the remaining case $p(\rho_0)=0$ we have $P(\lambda_0,\theta)=q(\rho_0)$ independently of $\theta$. So, $W(A)=\{z\colon\abs{z}\leq\lambda_0\}$, which completes the proof. \end{proof} 
\begin{cor} \label{th:geom_crith_circ}
In the setting of Proposition~\ref{th:gen6x61}, the numerical range of $A$ is circular if and only if its numerical radius is attained for at least two different points on the boundary of $W(A)$.\end{cor}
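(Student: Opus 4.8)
The plan is to obtain this corollary essentially for free from the dichotomy already established in part (ii) of Proposition~\ref{th:gen6x61}: either $w(A)$ is attained at a single (real) point, or $W(A)$ is a circular disk centered at the origin. The corollary is a restatement of this alternative in geometric language, so I would simply translate each of the two options into the two implications of the desired equivalence.

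First I would dispatch the forward implication. Suppose $W(A)$ is circular. Since $A$ is a nonzero rank-three partial isometry, $w(A)>0$, so $W(A)$ is a genuine disk whose boundary is a circle of positive radius. The value $w(A)$ equals that radius and is therefore attained at every point of the boundary circle, hence at infinitely many --- and in particular at least two --- points. This direction needs no computation.

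For the converse I would argue by elimination, using that the two alternatives of Proposition~\ref{th:gen6x61}(ii) are exhaustive. Assume $w(A)$ is attained at two or more distinct points of $\partial W(A)$. Then the first alternative, in which $w(A)$ is attained at a \emph{unique} point, cannot hold; since one of the two alternatives must hold, we are forced into the second, namely that $W(A)$ is a circular disk. Hence circularity follows, completing the equivalence.

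The only point requiring care --- and what I expect to be the main, if modest, obstacle --- is to confirm that the first alternative of Proposition~\ref{th:gen6x61}(ii) genuinely asserts uniqueness of the maximizing point, and that the dichotomy is truly exhaustive. Both are visible from the proof of that proposition: writing $\rho_0=\lambda_0^2$ for the square of the largest root $\lambda_0$ of the Kippenhahn polynomial \eqref{chpospe} at a maximizing angle $\theta_0$, the sign of $p(\rho_0)$ from \eqref{pqrho1} splits into three mutually exclusive cases, with $p(\rho_0)\neq 0$ forcing $\cos\theta_0=\pm1$ and thus a single boundary point (the right or left endpoint of $W(A)\cap\R$), while $p(\rho_0)=0$ forces $P(\lambda_0,\theta)=q(\rho_0)$ independently of $\theta$ and thus the disk. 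Thus no degenerate intermediate possibility arises for a nonzero matrix of the form \eqref{6x61}, the two options are exhaustive, and the corollary follows.
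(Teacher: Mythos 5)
Your proposal is correct and is exactly the argument the paper intends: the corollary is stated without proof precisely because it is the geometric restatement of the dichotomy in Proposition~\ref{th:gen6x61}(ii), with the forward direction trivial since $w(A)\geq\norm{A}/2=1/2>0$ guarantees a non-degenerate boundary circle. Your care in checking that the two alternatives are exhaustive and that the first asserts uniqueness matches what the proposition's proof establishes, so nothing is missing.
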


We now move to criteria for matrices \eqref{6x61} to have circular components in their Kippenhahn cures in terms of their entries. 

For completeness, let us first address a side issue when $C(A)$ contains $\{0\}$, i.e., a degenerate circle $\mathcal C_0$.
\begin{thm}\label{th:C0}Let $A$ be circularly similar to the matrix \eqref{6x61} satisfying \eqref{elts}--\eqref{cdef}. Then $C(A)\supset\{0\}$ if and only if $be=0$.   \end{thm}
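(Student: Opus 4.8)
The plan is to use Proposition~\ref{th:critcirc} in its degenerate form: the curve $C(A)$ contains the single point $\{0\}=\mathcal{C}_0$ precisely when the Kippenhahn polynomial is divisible by $\lambda^2-r^2$ with $r=0$, i.e.\ when $\lambda^2$ divides $P_A(\lambda,\theta)$, equivalently when $P_A(0,\theta)$ and $\partial_\lambda P_A(0,\theta)$ both vanish identically in $\theta$. Since we have the explicit factored expression \eqref{chpospe}, $P(\lambda,\theta)=-\lambda p(\rho)\cos\theta+q(\rho)$ with $\rho=\lambda^2$, the analysis reduces to reading off the lowest-order behavior of $p$ and $q$ in $\rho$.

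Concretely, I would substitute $\rho=\lambda^2$ and examine the $\lambda\to 0$ expansion. The term $-\lambda p(\rho)\cos\theta$ has the form $-\lambda(\,\text{const}+O(\lambda^2)\,)\cos\theta$, whose lowest power of $\lambda$ is the first power, with coefficient $-\tfrac{1}{16}((b^2+c^2)h-ceg)\cos\theta$ coming from the constant term $p(0)=\tfrac{1}{16}((b^2+c^2)h-ceg)$ of $p$. The even part $q(\rho)$ is a polynomial in $\rho=\lambda^2$ with constant term $q(0)=-\tfrac{1}{64}b^2e^2$. For $\lambda^2$ to divide $P$, I need the coefficient of $\lambda^0$ and of $\lambda^1$ in $P$ to vanish identically in $\theta$. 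The $\lambda^0$ coefficient is exactly $q(0)=-\tfrac{1}{64}b^2e^2$, and the $\lambda^1$ coefficient is $-p(0)\cos\theta$; note $q$ contributes only even powers so it does not affect the $\lambda^1$ coefficient. Thus divisibility by $\lambda^2$ forces both $b^2e^2=0$ and $p(0)=0$.

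The key observation making the result clean is that $b^2e^2=0$ is equivalent to $be=0$ (recall $b,e\geq 0$ from \eqref{elts}), and that the second condition $p(0)=(b^2+c^2)h-ceg=0$ is \emph{automatically} implied once $be=0$. Indeed, if $b=0$ then under our Convention (following \eqref{6x61}) $e=0$ forces $c=0$, but more to the point when $b=0$ we have $(b^2+c^2)h-ceg=c^2h-ceg=c(ch-eg)$; here I should check, using the orthogonality relations \eqref{cdef} and \eqref{length}, that this vanishes. Similarly if $e=0$ the Convention gives $c=0$, whence $p(0)=(b^2+0)h-0=b^2h$, which need not vanish — so the forward direction needs care and I expect this to be the main subtlety. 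Specifically, requiring $\lambda^2\mid P$ genuinely needs \emph{both} $q(0)=0$ and $p(0)=0$, so I must verify that the problem statement's criterion "$be=0$" indeed captures the full condition, i.e.\ that $be=0$ together with the standing relations \eqref{elts}--\eqref{cdef} already yields $p(0)=0$.

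The main obstacle will therefore be the bookkeeping around the Convention and the orthogonality constraints: I must confirm that under \eqref{elts}--\eqref{cdef} the single scalar condition $be=0$ is equivalent to the conjunction $b^2e^2=0$ and $(b^2+c^2)h-ceg=0$. I would dispatch this by case analysis. If $b=0$: the Convention does not force $c=0$ (that triggers only when $e=0$), so I use $ce+df=0$ from \eqref{cdef} and the non-negativity constraints to show $c^2h=ceg$; this should follow from combining the column relations. If $e=0$: the Convention gives $c=0$, so $p(0)=b^2h$, and I must argue $b^2h=0$ as well — which suggests that when $e=0$ one additionally needs $bh=0$, so either the stated criterion already accounts for this through the standing assumptions or I should re-examine whether $p(0)$ truly must vanish separately. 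Resolving this case distinction correctly, and checking it against the Convention, is precisely where the real content of the proof lies; once it is settled, the divisibility criterion of Proposition~\ref{th:critcirc} delivers the equivalence immediately.
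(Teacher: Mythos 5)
There is a genuine gap, and it is exactly where you sensed trouble. You apply Proposition~\ref{th:critcirc} literally with $r=0$ and demand divisibility of $P_A$ by $\lambda^2$, which forces the two conditions $q(0)=-\tfrac{1}{64}b^2e^2=0$ \emph{and} $p(0)=\tfrac{1}{16}((b^2+c^2)h-ceg)=0$. But the second condition is not implied by $be=0$ and is not part of the correct criterion. The degenerate circle $\mathcal C_0=\{0\}$ is a \emph{point} component of the Kippenhahn curve, i.e.\ the envelope of the family of lines through the origin in all directions; this corresponds to a single linear factor $\lambda$ of $P_A(\lambda,\theta)$, not to the double factor $\lambda^2$. (Dually: a point component of $C(A)$ at $z_0$ corresponds to one linear factor $x_0\cos\theta+y_0\sin\theta-\lambda$ of the Kippenhahn polynomial; for $z_0=0$ this is just $\lambda$.) This is precisely what the paper uses: since $P(\lambda,\theta)=-\lambda p(\lambda^2)\cos\theta+q(\lambda^2)$, divisibility by $\lambda$ holds if and only if the constant term $q(0)=-\tfrac{1}{64}b^2e^2$ vanishes, i.e.\ $be=0$, and the proof ends there.

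A concrete counterexample to your stronger criterion: take $b=0$, $c=d=1/\sqrt2$, $e=1/2$, $f=-1/2$, $g=0$, $h=1/\sqrt2$; all of \eqref{elts}--\eqref{cdef} hold, $be=0$, but $p(0)=c^2h-ceg=1/(2\sqrt2)\neq 0$. Here $A$ has a zero row and column in position $2$, so it is permutationally similar to $(0)\oplus(\cdot)$ and $\{0\}$ manifestly lies in $C(A)$, even though $\lambda^2$ does not divide $P_A$. So the case analysis you were planning (trying to derive $p(0)=0$ from $be=0$) cannot succeed; the fix is simply to use the single-factor criterion $\lambda\mid P_A$, after which your computation of $q(0)$ already finishes the proof along the same lines as the paper.
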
 
\begin{proof}Without loss of generality, $A$ is equal \eqref{6x61}, and so its Kippenhahn polynomial is given by \eqref{chpospe}. Since the origin is a component of the Kippenhahn curve if and only if the Kippenhahn polynomial is divisible by $\lambda$, the result follows from the formula for $q(\rho)$ in \eqref{pqrho1}.\end{proof}

Note that sufficiency in Theorem~\ref{th:C0} is not surprising at all since under the matrix \eqref{6x61} is permutationally similar to a block diagonal matrix with one of the diagonal blocks being $(0)$ if $b=0$ or $e=0$ (when, by Convention, also $c=0$). The converse, however, is not obvious. 

\begin{thm} \label{th:def2crit}Let  $A\in \C^{6\times 6}$ be a rank-three  partial isometry with $\defe A\geq 2$. Then $C(A)$ contains a (non-degenerate) circle if and only if $A$ is circularly equivalent to the matrix \eqref{6x61} with the entries satisfying \eqref{elts}, and the system
 \eq{pqrho} p(\rho)=0, \quad q(\rho)=0,\en 
where $p,q$ are defined by \eqref{pqrho1}, has solutions $\rho_i\in (0,1)$. 
If this is the case, the  circles in question are centered at the origin, with the radii $r_i=\sqrt{\rho_i}$.
\end{thm}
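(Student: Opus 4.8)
The plan is to reduce the whole statement to the factored Kippenhahn polynomial together with a divisibility criterion. Since $A$ is a rank-three partial isometry with $\defe A\geq 2$ whose Kippenhahn curve contains a circle, Proposition~\ref{th:2can} lets me assume that $A$ is exactly the matrix \eqref{6x61} normalized by \eqref{elts}--\eqref{length}; by the computation in the proof of Proposition~\ref{th:gen6x61} its Kippenhahn polynomial then has the form \eqref{chpospe}, namely $P(\lambda,\theta)=-\lambda\, p(\rho)\cos\theta+q(\rho)$ with $\rho=\lambda^2$ and $p,q$ as in \eqref{pqrho1}. By Theorem~\ref{th:GWW63} any circle contained in $C(A)$ is centered at the origin, so Proposition~\ref{th:critcirc} applies directly: $C(A)\supset\mathcal C_r$ if and only if $\lambda^2-r^2$ divides $P(\lambda,\theta)$, viewed as a polynomial in $\lambda$ whose coefficients depend on $\theta$, for every $\theta$.

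The second step converts this divisibility into the system \eqref{pqrho}. Setting $\lambda=r$, the requirement $P(r,\theta)=0$ for all $\theta$ reads $-r\,p(r^2)\cos\theta+q(r^2)=0$ identically in $\theta$; since the functions $1$ and $\cos\theta$ are linearly independent and $r>0$, this is equivalent to the pair $p(r^2)=0$ and $q(r^2)=0$ (the substitution $\lambda=-r$ produces the same two equations). Conversely, if $\rho_0$ is a common root of $p$ and $q$, then $\lambda^2-\rho_0$ factors out of both $q(\lambda^2)$ and $p(\lambda^2)$, hence out of $P$, so $\mathcal C_{\sqrt{\rho_0}}\subset C(A)$ by Proposition~\ref{th:critcirc}. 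Writing $\rho_0=r^2$, this shows that the existence of a circular component of radius $r>0$ is equivalent to $\rho_0$ being a positive common root of $p$ and $q$, and the resulting circles are centered at the origin with radii $\sqrt{\rho_0}$.

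The third step pins down the range $\rho_0\in(0,1)$. Positivity is immediate from $r>0$. For the upper bound I would use that, whenever $\pm r$ are roots of $P(\cdot,\theta)$ for every $\theta$, they are eigenvalues of the Hermitian matrices $\re(e^{i\theta}A)$, whose norm satisfies $\norm{\re(e^{i\theta}A)}\leq\norm{A}=1$; hence $r\leq 1$, i.e. $\rho_0\leq 1$. To exclude the endpoint I would evaluate $q$ at $\rho=1$: using the length relations \eqref{length} to replace $b^2+c^2$ by $1-d^2$ and $e^2$ by $1-f^2-g^2-h^2$, an elementary estimate (crudely bounding the only negative contributions $-d^2/16$ and $-b^2e^2/64$) gives $q(1)>0$, so $\rho_0=1$ is never a root of $q$. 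Combining the two facts, every positive common root of $p$ and $q$ lies in $(0,1)$; this simultaneously supplies the forward implication and shows that restricting the hypothesis to $\rho_i\in(0,1)$ costs nothing.

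I expect the only genuine obstacle to lie in this endpoint analysis, specifically the verification that $q(1)>0$ for all admissible parameter values, since the remainder is a clean, essentially formal divisibility argument made possible by the already-established factored form \eqref{chpospe}. That verification is elementary but does require simultaneously invoking both normalizations in \eqref{length} and the sign constraints \eqref{elts} to control the sign of $q(1)$.
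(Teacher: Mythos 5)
Your argument is correct and follows essentially the same route as the paper's proof: normalize via Proposition~\ref{th:2can}, invoke Theorem~\ref{th:GWW63} to center the circle at the origin, and read off the system \eqref{pqrho} from the factored form \eqref{chpospe} together with Proposition~\ref{th:critcirc}. Your explicit endpoint analysis (using $\norm{\re(e^{i\theta}A)}\leq 1$ for $\rho\leq 1$ and checking $q(1)>0$ via \eqref{length} to exclude $\rho=1$) is a detail the paper leaves implicit, and it does check out.
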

\begin{proof}Due to Proposition~\ref{th:2can}, we may suppose that $A$ is given by \eqref{6x61}, with the non-zero entries' signs as described there. Also, circular components of $C(A)$, if exist, have to be centered at the origin due to Theorem~\ref{th:GWW63}.  Applying Proposition~\ref{th:critcirc} to the Kippenhahn polynomial \eqref{chpospe}, we see that a circle of radius $r$ is contained in $C(A)$ if and only if \eqref{pqrho} holds with $\rho=r^2$. 
\end{proof} 

\section{Nilpotent rank three partial isometries}\label{s:nilp}
For nilpotent matrices \eqref{6x61}, equalities \eqref{length},\eqref{cdef} yield some not quite trivial identities useful in further considerations. For convenience of reference, we list them in a separate lemma.  
\begin{lem}\label{th:ident} In the setting of Proposition~\ref{th:2can}, if in addition $h=0$, 
\eq{id1} 
b^2+c^2+e^2+d^2g^2=1+b^2e^2, \en 
and, if also $g=0$,
\eq{id2}
b^2e^2+d^2+f^2=1.\en 
\end{lem}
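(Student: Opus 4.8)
The plan is to treat both identities as purely algebraic consequences of the three defining relations carried over from Proposition~\ref{th:2can}: the two column-norm conditions $b^2+c^2+d^2=1$ and $e^2+f^2+g^2+h^2=1$ from \eqref{length}, together with the orthogonality relation $ce+df=0$ from \eqref{cdef}. The single most useful auxiliary fact is obtained by squaring the latter, giving $c^2e^2=d^2f^2$. No spectral or geometric input is needed; everything reduces to eliminating variables one at a time, and the role of $h=0$ (and later $g=0$) is simply to prune the second norm condition.

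For \eqref{id1}, I would first set $h=0$, so that the second norm condition reads $e^2+f^2+g^2=1$, whence $g^2=1-e^2-f^2$ and $d^2g^2=d^2-d^2e^2-d^2f^2$. Starting from the left-hand side $b^2+c^2+e^2+d^2g^2$, I replace $b^2+c^2$ by $1-d^2$ (first norm condition) and substitute the expression just found for $d^2g^2$; the terms in $d^2$ cancel and one is left with $1+e^2-d^2e^2-d^2f^2$. Now invoking $d^2f^2=c^2e^2$ turns this into $1+e^2(1-c^2-d^2)$, and a final application of $b^2+c^2+d^2=1$ gives exactly $1+b^2e^2$, as required.

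For \eqref{id2}, I additionally set $g=0$, so that the second norm condition collapses to $e^2+f^2=1$, i.e.\ $f^2=1-e^2$. Here the key move is to use $c^2e^2=d^2f^2=d^2(1-e^2)$ to rewrite $d^2=c^2e^2+d^2e^2=e^2(c^2+d^2)$. Then $b^2e^2+d^2=e^2(b^2+c^2+d^2)=e^2$ by the first norm condition, and adding $f^2=1-e^2$ yields $b^2e^2+d^2+f^2=1$.

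I do not anticipate a genuine obstacle: each identity follows by a short elimination, the only point requiring care being to square \eqref{cdef} at the right moment so that the mixed term $d^2f^2$ can be traded for $c^2e^2$ and then absorbed by the norm condition $b^2+c^2+d^2=1$.
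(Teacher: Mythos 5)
Your proof is correct and follows essentially the same route as the paper: both arguments are pure eliminations using the two norm conditions of \eqref{length}, the orthogonality relation \eqref{cdef} squared to trade $d^2f^2$ for $c^2e^2$, and nothing else. The only cosmetic difference is that the paper deduces \eqref{id2} directly from \eqref{id1}, whereas you re-derive it from the base relations; the content is identical.
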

\begin{proof}
To establish \eqref{id1}, observe that
\begin{multline*}
b^2+c^2+e^2+d^2g^2= \text{ [using first equality in \eqref{length}] } \\ b^2+c^2+(b^2+c^2+d^2)e^2+d^2g^2= \\ b^2+c^2+b^2e^2+c^2e^2+d^2e^2+d^2g^2= \text{ [using \eqref{cdef}]  }  \\ 
b^2+c^2+b^2e^2+d^2f^2+d^2e^2+d^2g^2=b^2+c^2+b^2e^2+d^2(f^2+e^2+g^2)= \\
\text{ [using second equality in \eqref{length}] } =1-d^2+b^2e^2+d^2 = 1+b^2e^2.
\end{multline*} 
From \eqref{id1}, provided that $g=0$:
\[  b^2e^2+d^2+f^2=b^2+c^2+e^2-1+d^2+f^2 = (b^2+c^2+d^2-1)+(e^2+f^2), \]
which collapses to one when both equalities in \eqref{length} are invoked. 
\end{proof}

From Theorem~\ref{th:def2crit}, the result for nilpotent matrices \eqref{6x61} follows almost immediately. 
\begin{thm}\label{th:nilp}Let a rank three partial isometry $A$ be nilpotent, i.e., circularly equivalent to \eqref{6x61} with $h=0$. Then $C(A)$ contains a (non-trivial) circular component if and only if $ceg=0$. If this condition holds, then $C(A)$ consists of three concentric circles, one of which degenerates into $\{0\}$ if and only if $be=0$:
\eq{3cir} C(A)=\mathcal C_{r_1}\cup \mathcal C_{r_2}\cup\mathcal C_{r_3}. \en 
Here $r_i=\sqrt{\rho_i}$ while  $\rho_1\leq\rho_2\leq\rho_3$ are the roots of the polynomial 
\[  f(\rho):=\rho^3-\frac{3}{4}\rho^2+\frac{1}{16}(b^2+c^2+e^2+1)\rho-\frac{1}{64}b^2e^2.\]
In particular, $W(A)$ is the circular disk bounded by $\mathcal C_{r_3}$. \end{thm}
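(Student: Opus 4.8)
The plan is to derive the result directly from Theorem~\ref{th:def2crit} by specializing to the nilpotent case $h=0$ and analyzing the resulting system \eqref{pqrho}. When $h=0$, the polynomial $p(\rho)$ from \eqref{pqrho1} collapses dramatically: the two leading terms vanish, leaving $p(\rho)=-\frac{1}{16}ceg$, a constant. Thus the condition $p(\rho)=0$ becomes the $\rho$-independent condition $ceg=0$. This immediately gives the \emph{iff} criterion: if $ceg\neq 0$, then $p(\rho)$ is a nonzero constant and the system \eqref{pqrho} has no common solution, so $C(A)$ contains no circle; if $ceg=0$, then the first equation is satisfied identically, and the circle condition reduces to solving $q(\rho)=0$ alone.

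Next I would show that, under $h=0$, the polynomial $q(\rho)$ coincides with the displayed $f(\rho)$. Setting $h=0$ in the formula for $q$ in \eqref{pqrho1} yields exactly $\rho^3-\frac{3}{4}\rho^2+\frac{1}{16}(c^2+b^2+e^2+1)\rho-\frac{1}{64}b^2e^2$, which is $f(\rho)$. So when $ceg=0$, the radii of the circular components are precisely the square roots of the roots $\rho_i$ of $f$. To obtain the clean statement \eqref{3cir} of three concentric circles, the key point is that all three roots $\rho_1,\rho_2,\rho_3$ of $f$ lie in $(0,1)$ — or, in the degenerate situation, that one root is zero. Here the product of the roots is $\frac{1}{64}b^2e^2\geq 0$, which vanishes precisely when $be=0$; this gives the claim that one circle degenerates into $\{0\}$ if and only if $be=0$, consistent with Theorem~\ref{th:C0}.

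The main obstacle I anticipate is verifying that all three roots of $f$ are real and lie in the interval $(0,1)$ (hence give three genuine concentric circles, not a single circle with complex or out-of-range companions). The cleanest route is to recall that $q(\rho)$ governs the intersection of $C(A)$ with the real axis via \eqref{chpospe} with $\cos\theta=0$: since $C(A)$ is a real algebraic curve whose convex hull is $W(A)$ and whose relevant radii are nonnegative and bounded by $w(A)\leq 1$, the roots $\rho_i=\lambda_i^2$ are automatically real and in $[0,1]$. Alternatively one argues directly: $f(0)=-\frac{1}{64}b^2e^2\leq 0$, and the bound $w(A)\leq\norm{A}=1$ forces the largest root to satisfy $\rho_3\leq 1$, while the identity \eqref{id1} of Lemma~\ref{th:ident} (which applies since $h=0$) constrains the coefficients so that all three roots are real and nonnegative. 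I would use \eqref{id1} to control the sign of $f$ at the endpoints and its critical behavior, confirming three roots in $[0,1)$.

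Finally, the concluding statement that $W(A)$ is the circular disk bounded by $\mathcal C_{r_3}$ follows from Corollary~\ref{th:geom_crith_circ}: once $C(A)$ is a union of concentric circles centered at the origin, the numerical radius is attained at every point of the outermost circle $\mathcal C_{r_3}$, hence at more than two boundary points, forcing $W(A)$ to be the disk of radius $r_3$. This ties the enumeration of circular components back to the shape of $W(A)$ and completes the argument.
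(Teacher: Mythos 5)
Your proposal is correct and follows essentially the same route as the paper: specialize Theorem~\ref{th:def2crit} to $h=0$, note that $p(\rho)$ collapses to the constant $-\tfrac{1}{16}ceg$ (giving the criterion $ceg=0$) and that $q$ becomes $f$, then argue that $f$ has three real roots in $[0,1)$ because they arise as squares of eigenvalues of the Hermitian matrices $\re(e^{i\theta}A)$. The paper compresses the last two points into ``due to the nature of Kippenhahn polynomials'' and obtains the disk directly from $W(A)=\conv C(A)$, whereas you invoke Corollary~\ref{th:geom_crith_circ}; these are cosmetic differences only.
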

\begin{proof} Observe that, when $h=0$, \eqref{pqrho} simplifies to $ceg=0\land f(\rho)=0$. 
So, condition $ceg=0$ is necessary for $C(A)$ to contain at least one  non-trivial circle. On the other hand, if it holds, then the number of circles coincides with the number of real roots of $f$. Due to the nature of Kippenhahn polynomials, $f$ has three real roots (counting multiplicities), all located in $[0,1)$.  Moreover, $r_1=0$ if and only if the constant term of $f$ is zero  which agrees with Theorem~\ref{th:C0}. \end{proof} 
More can be said about the values of $r_i$ in \eqref{3cir}. 
\begin{prop} \label{th:rii} Let $A$ be circularly equivalent to the matrix \eqref{6x61} with $h=ceg=0$. Then {\em (i)} $r_1\leq 1/2\leq r_3$, {\em (ii)} $r_2=1/2$ if and only if $g=0$ or $c=d=0$, {\em (iii)} the circles $\mathcal C_{r_i}$ are either pairwise distinct, or all coincide with $\mathcal C_{1/2}$.\end{prop}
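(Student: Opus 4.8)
The plan is to reduce all three assertions to statements about the location and multiplicity of the three roots $\rho_1\le\rho_2\le\rho_3$ of the cubic
\[ f(\rho)=\rho^3-\tfrac34\rho^2+\tfrac{1}{16}(b^2+c^2+e^2+1)\rho-\tfrac{1}{64}b^2e^2 \]
furnished by Theorem~\ref{th:nilp}, recalling that $r_i=\sqrt{\rho_i}$ and that all $\rho_i$ are real and lie in $[0,1)$. Two elementary computations drive everything, and I would record them first. By Vieta's formulas the roots satisfy $\rho_1+\rho_2+\rho_3=3/4$, so they average to $1/4$. A direct evaluation gives $f(1/4)=\tfrac{1}{64}(b^2+c^2+e^2-1-b^2e^2)$, which by identity \eqref{id1} (valid since $h=0$) collapses to $f(1/4)=-d^2g^2/64\le 0$.

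Part (i) is then immediate: since the three roots average to $1/4$, the smallest cannot exceed $1/4$ and the largest cannot fall below it, so $\rho_1\le 1/4\le\rho_3$, i.e. $r_1\le 1/2\le r_3$. For part (ii), note that $f(1/4)=0$ exactly when $dg=0$. When $dg=0$, identity \eqref{id1} forces $b^2+c^2+e^2=1+b^2e^2$, so dividing $f$ by $\rho-1/4$ leaves the quadratic $\rho^2-\tfrac12\rho+\tfrac{1}{16}b^2e^2$, whose roots $\tfrac14(1\pm\sqrt{1-b^2e^2})$ (real because $be\le 1$) lie on opposite sides of $1/4$; hence $1/4$ is the \emph{middle} root and $\rho_2=1/4$. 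Conversely $\rho_2=1/4$ gives $f(1/4)=0$, i.e. $dg=0$. It remains to rephrase $dg=0$ under the standing hypotheses $ceg=0$ and the Convention: if $g=0$ we are in the first alternative, while if $g\neq0$ then $dg=0$ forces $d=0$ and $ceg=0$ forces $ce=0$, which together with the Convention yields $c=0$, i.e. $c=d=0$. The reverse implications are trivial, giving $\rho_2=1/4\iff g=0$ or $c=d=0$.

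Part (iii) is the crux and the step I expect to be the main obstacle, because it amounts to excluding a double root not shared by the third circle. When $dg=0$, the explicit roots $\tfrac14(1\pm\sqrt{1-b^2e^2})$ and $1/4$ from (ii) are pairwise distinct unless $be=1$, in which case all three equal $1/4$ and coincide with $\mathcal C_{1/2}$; no intermediate coincidence can occur. When $dg\neq0$ one has $g\neq0$, so $ceg=0$ and the Convention force $c=0$, whence \eqref{cdef} gives $df=0$ and then $f=0$; thus $c=f=0$ and \eqref{length} reduces to $b^2+d^2=1$, $e^2+g^2=1$. Passing to the depressed cubic $t^3+Pt+Q$ via $\rho=t+1/4$, one computes $P=\tfrac{1}{16}(b^2+c^2+e^2-2)=-\tfrac{1}{16}(d^2+g^2)$ and $Q=f(1/4)=-d^2g^2/64$, so three distinct real roots are equivalent to $4P^3+27Q^2<0$, i.e. to the strict inequality $4(d^2+g^2)^3>27d^4g^4$.

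This is where boundedness of the entries enters, and it is the one nontrivial estimate. By AM-GM, $d^2g^2\le\tfrac14(d^2+g^2)^2$, hence $27d^4g^4\le\tfrac{27}{16}(d^2+g^2)^4$; and since $d^2+g^2\le 2<\tfrac{64}{27}$, the factor $\tfrac{27}{16}(d^2+g^2)=\tfrac{27}{64}\cdot 4(d^2+g^2)$ is strictly less than $4$, so $\tfrac{27}{16}(d^2+g^2)^4<4(d^2+g^2)^3$ (using $d^2+g^2>0$). Chaining these gives the required strict inequality, so the cubic has three distinct roots in the $dg\neq0$ case. Combining this with the $dg=0$ analysis completes (iii): the circles are pairwise distinct, except when $be=1$, when all three collapse onto $\mathcal C_{1/2}$.
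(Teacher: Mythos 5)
Your proof is correct, and it diverges from the paper's in a way worth noting. Both arguments pivot on the same central identity $f(1/4)=\tfrac{1}{64}(b^2+c^2+e^2-1-b^2e^2)=-d^2g^2/64$ obtained from \eqref{id1}, and both translate $dg=0$ into ``$g=0$ or $c=d=0$'' via the Convention and \eqref{cdef}. The differences: for (i) you use the Vieta sum $\rho_1+\rho_2+\rho_3=3/4$ (min $\le$ mean $\le$ max), whereas the paper combines the Vieta product $r_1r_2r_3=be/8$ with the operator-theoretic bound $r_3=w(A)\ge\norm{A}/2$; your version is purely algebraic and gets both inequalities from one relation. For (ii) you are actually more complete than the paper: by exhibiting the cofactor $\rho^2-\tfrac12\rho+\tfrac{1}{16}b^2e^2$ with roots $\tfrac14(1\pm\sqrt{1-b^2e^2})$ straddling $1/4$, you justify that $1/4$ is the \emph{middle} root $\rho_2$, a point the paper leaves implicit after showing only that $1/4$ is \emph{some} root. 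For (iii), the crux, the paper detects multiple roots by dividing $f$ by $f'$ and reading off when they share a root, landing directly on $b=e=1$, $c=d=f=g=0$; you instead split on $dg=0$ (explicit roots) versus $dg\neq0$ (reduce to $c=f=0$, pass to the depressed cubic with $P=-\tfrac{1}{16}(d^2+g^2)$, $Q=-d^2g^2/64$, and verify $4(d^2+g^2)^3>27d^4g^4$ by AM--GM together with $d^2+g^2\le 2<64/27$). Your route costs an extra case split and a discriminant estimate but is more transparent about \emph{why} no partial coincidence can occur; the paper's $\gcd(f,f')$ computation is shorter but hides the arithmetic in an unstated division. Both arrive at the same characterization of the degenerate case as $J_2\oplus J_2\oplus J_2$.
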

\begin{proof} (i) Due to the Vieta theorem, $r_1^3\leq r_1r_2r_3=be/8\leq 1/8$. So, $r_1\leq 1/2$. 
On the other hand, $r_3$ is the numerical radius of $A$ which cannot be smaller than $\norm{A}/2=1/2$.

\noindent
(ii)  Observe that 
$64f(1/4)=b^2+c^2+e^2-1-b^2e^2$, 
and so, according to \eqref{id1}, \eq{64f} f(1/4)=-d^2g^2/64 \en  equals zero
if and only if $g=0$ or $d=0$. If in the latter case $g\neq 0$, then from $ceg=0$ it follows that $ce=0$. Adopting the convention mentioned in the end of the previous section, this implies $c=0$. 

\noindent
(iii) In order for any two of the radii $r_i$ to coincide it is necessary and sufficient that $f$ has a common root with its derivative $f'$. Dividing $f$ by $f'$ we see that this is happening if and only if the root in common is $\rho=1/4$ while $b=e=1$, $c=d=f=g=0$. The matrix $A$ is then nothing but 
\[ \begin{bmatrix} 0 & 0 & 0 & 1 & 0 & 0 \\ 0 & 0 & 0 & 0 & 1 & 0 \\ 0 & 0 & 0 & 0 & 0 & 1\\ 0 & 0 & 0 & 0 & 0 & 0 \\ 0 & 0 & 0 & 0 & 0 & 0 \\ 0 & 0 & 0 & 0 & 0 & 0 \end{bmatrix}, \]
which is permutationally similar to $J_2\oplus J_2\oplus J_2$. It remains to recall that $C(J_2)=\mathcal C_{1/2}$. \end{proof}
Note that according to \eqref{64f} $f(1/4)\leq 0$ which provides an alternative proof for inequality $r_3\geq 1/2$ in (i).

Naturally, $r_i$ are easily computable when $be=0$ since then $f$ is divisible by $\rho$, with the quotient being a quadratic polynomial.
\begin{cor}\label{th:be0}Let in \eqref{6x61} $h=be=ceg=0$. Then \eqref{3cir} holds with $r_1=0$ and $r_{2,3}=\frac{1}{2}\sqrt{\frac{3\pm\sqrt{5-4(b^2+e^2+c^2)}}{2}}$.\end{cor}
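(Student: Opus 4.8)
The plan is to deduce everything directly from Theorem~\ref{th:nilp} and reduce the work to an explicit root computation. Since the hypotheses include $h=0$ and $ceg=0$, Theorem~\ref{th:nilp} already applies and guarantees that $C(A)$ has the form \eqref{3cir}, with $r_i=\sqrt{\rho_i}$ and $\rho_1\le\rho_2\le\rho_3$ the roots of
\[ f(\rho)=\rho^3-\tfrac34\rho^2+\tfrac1{16}(b^2+c^2+e^2+1)\rho-\tfrac1{64}b^2e^2, \]
all of which are real and lie in $[0,1)$. Thus the structural part of the statement is immediate, and the only remaining task is to solve $f(\rho)=0$ in closed form under the extra assumption $be=0$.

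First I would use $be=0$ to annihilate the constant term of $f$, which then factors as
\[ f(\rho)=\rho\Bigl(\rho^2-\tfrac34\rho+\tfrac1{16}(b^2+c^2+e^2+1)\Bigr). \]
This gives at once the root $\rho_1=0$, hence $r_1=0$, in agreement with Theorem~\ref{th:C0} (where $be=0$ is exactly the condition for $\{0\}\subset C(A)$). Because all three roots of $f$ are real by Theorem~\ref{th:nilp}, the quadratic factor has real roots too, so its discriminant is automatically nonnegative and the quadratic formula applies with no case distinction; moreover $\rho_1=0$ does not exceed the two nonnegative roots of the quadratic, so the ordering $\rho_1\le\rho_2\le\rho_3$ is respected.

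Next I would apply the quadratic formula to $\rho^2-\frac34\rho+\frac1{16}(b^2+c^2+e^2+1)$, whose discriminant is $\frac9{16}-\frac4{16}(b^2+c^2+e^2+1)=\frac1{16}\bigl(5-4(b^2+c^2+e^2)\bigr)$. This yields
\[ \rho_{2,3}=\frac{3\pm\sqrt{5-4(b^2+c^2+e^2)}}{8}, \]
and taking square roots gives the claimed $r_{2,3}=\sqrt{\rho_{2,3}}=\frac12\sqrt{\tfrac{3\pm\sqrt{5-4(b^2+e^2+c^2)}}{2}}$, using the trivial identity $\sqrt{y/4}=\tfrac12\sqrt{y}$. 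Every step here is an elementary factorization or an application of the quadratic formula, so there is no genuine obstacle; the only point requiring a moment's care is cosmetic, namely matching the compact form $\frac{3\pm\sqrt{\cdots}}{8}$ obtained from the quadratic formula to the stated expression $\frac12\sqrt{\frac{3\pm\sqrt{\cdots}}{2}}$.
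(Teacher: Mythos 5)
Your proof is correct and follows exactly the route the paper intends: the paper itself derives this corollary by observing that $be=0$ kills the constant term of $f$, so $f$ factors as $\rho$ times a quadratic whose roots are found by the quadratic formula. Your verification of the discriminant and the ordering of the roots matches the paper's (tersely stated) argument.
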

 Repeated application of Proposition~\ref{th:uirr} reveals that matrices \eqref{6x61} with $c=0, bdeg\neq 0$ are unitarily irreducible, matrices with $e$ or $b$ (but not both) equal to zero and $dg\neq 0$ are unitarily similar to 
\[ (0)\oplus \begin{bmatrix}0 & 0 & 1 & 0 & 0 \\ 0 & 0 & 0 & b & 0 \\0 & 0 & 0 & d & 0 \\ 0 & 0 & 0 & 0 & g \\0 & 0 & 0 & 0 & 0 \end{bmatrix} 
\text{ and }  
 (0)\oplus \begin{bmatrix}0 & 0 & 1 & 0 & 0 \\ 0 & 0 & 0 & c & e \\0 & 0 & 0 & d & f \\ 0 & 0 & 0 & 0 & g \\0 & 0 & 0 & 0 & 0 \end{bmatrix}, \]
respectively, with the unitarily irreducible $5$-by-$5$ summand in both cases.  Note that this kind of $5$-by-$5$ nilpotent matrices was treated in \cite{HeSpitSu}, and our Corollary~\ref{th:be0} is in agreement with \cite[Proposition 6]{HeSpitSu}. 

Finally, if $e=b=0$, $dg\neq 0$, then $A\sim (0)\oplus (0)\oplus J_4$, and $r_{2,3}=\frac{\sqrt{5}\pm 1}{4}$, in agreement with the formula from Corollary~\ref{th:be0}. 

Let us mention for completeness the paper \cite{Mat10} devoted to circularity of $W(A)$ in case when $A\in\C^{n\times n}$ is nilpotent, not necessarily a partial isometry, but $n\leq 5$.

Due to Corollary~\ref{th:1/2}, conditions (ii) of Proposition~\ref{th:rii} imply that $A$ is unitarily reducible to a block diagonal matrix with $J_2$ as one of its blocks. In the next section, we will address this situation in more detail. 

\section{Matrices with Kippenhahn curves containing $\mathcal C_{1/2}$}
\label{s:C1/2}
Moving forward, we continue to suppose that $A$ is in the form \eqref{6x61}, with conditions \eqref{elts}--\eqref{cdef} satisfied, but do not impose any other a priori conditions on $h$.  

Our next step is to establish the criterion for $C(A)$ to contain $\mathcal C_{1/2}$  alternative to Corollary~\ref{th:1/2} and stated explicitly in terms of the entries of $A$. 
\begin{thm}\label{th:r1/2v1}For $A$ as just described, its Kippenhahn curve contains $\mathcal C_{1/2}$ if and only if either {\em (i)} $c=d=0$ or \em {(ii)} $g=h=0$. \end{thm}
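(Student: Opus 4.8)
The plan is to apply Theorem~\ref{th:def2crit} with the specific value $\rho=1/4$, since containing $\mathcal C_{1/2}$ is exactly the condition that $r=1/2$, i.e. $\rho=r^2=1/4$, be a solution of the system \eqref{pqrho}. So I would substitute $\rho=1/4$ into the explicit polynomials $p,q$ from \eqref{pqrho1} and analyze when both vanish simultaneously. Evaluating $p(1/4)$ gives
\[ p(1/4)= h\cdot\tfrac{1}{16}-\tfrac12 h\cdot\tfrac14+\tfrac{1}{16}\bigl((b^2+c^2)h-ceg\bigr)=\tfrac{1}{16}\bigl((b^2+c^2)h-ceg\bigr), \]
since the first two $h$-terms cancel. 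Thus the condition $p(1/4)=0$ reduces to the clean relation $(b^2+c^2)h=ceg$.

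Next I would evaluate $q(1/4)$ using \eqref{pqrho1}. The hope is that, after invoking the length relations \eqref{length} and the orthogonality relation \eqref{cdef}, $q(1/4)$ simplifies to a manageable expression; indeed, the computation of $64 f(1/4)=-d^2g^2$ carried out in the nilpotent case (see \eqref{64f}) is the $h=0$ specialization of exactly this quantity, which strongly suggests that in general $q(1/4)$ factors nicely in terms of $d,g,h$ and the entries. I expect $q(1/4)$ to collapse, via the identities in Lemma~\ref{th:ident} (or their $h\neq 0$ analogues derived the same way), to something like a sum of squares or a product whose vanishing forces a small number of entries to be zero. The two conditions $p(1/4)=0$ and $q(1/4)=0$ together then have to be solved as a system.

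The proof strategy is then to show that the solution set of $\{p(1/4)=0,\ q(1/4)=0\}$, subject to the sign and norm constraints \eqref{elts}--\eqref{cdef}, is exactly the union of the two cases (i) $c=d=0$ and (ii) $g=h=0$. For sufficiency one checks directly that each of these two cases makes both $p(1/4)$ and $q(1/4)$ vanish: in case (ii) $h=0$ kills the $(b^2+c^2)h$ and $ceg$ terms while $g=0$ handles $q(1/4)$ through $-d^2g^2$; in case (i) the vanishing of $c$ and $d$ removes the $ceg$ term from $p$ and the $d^2g^2$-type term from $q$, and one verifies the remaining pieces cancel using \eqref{length}. For necessity one assumes both equations hold and, using the sign conventions (all of $b,c,d,e,h\geq 0$), argues that the quantities $q(1/4)$ and $p(1/4)$ being zero force either $d=0$ together with $c=0$, or $g=0$ together with $h=0$.

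The main obstacle I anticipate is the necessity direction: after the substitution one has two polynomial equations in the parameters, and ruling out spurious mixed solutions (e.g. where some but not all of $c,d,g,h$ vanish) requires carefully combining the two equations with the constraints \eqref{length} and \eqref{cdef}. In particular the orthogonality relation $ce+df=0$ couples the variables in a way that must be exploited to convert $q(1/4)$ into a form—most plausibly a single sign-definite expression such as $-d^2g^2$ type—whose vanishing is transparent. Once $q(1/4)=0$ is seen to force $dg=0$, the equation $p(1/4)=0$, rewritten as $(b^2+c^2)h=ceg$, should pin down which of the two listed cases occurs; the bookkeeping of the $ce=0$ versus $ce\neq0$ sub-cases (where the Convention $c=0$ when $e=0$ and the sign of $f$ enter) is the part that needs the most care.
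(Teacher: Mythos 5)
Your overall strategy---substituting $\rho=1/4$ into the system \eqref{pqrho} of Theorem~\ref{th:def2crit} and analyzing when both $p(1/4)$ and $q(1/4)$ vanish subject to \eqref{elts}--\eqref{cdef}---is exactly the paper's approach. However, the one computation you actually carry out is wrong: $h\cdot\tfrac{1}{16}-\tfrac12 h\cdot\tfrac14=-\tfrac{h}{16}$, so the two $h$-terms do \emph{not} cancel. The correct evaluation is $16\,p(1/4)=(b^2+c^2-1)h-ceg=-(d^2h+ceg)$ after invoking the first identity in \eqref{length}; the condition is $d^2h+ceg=0$, not $(b^2+c^2)h=ceg$. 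This is not a cosmetic slip: under your version, case (i) $c=d=0$ would force $b^2h=0$, i.e.\ case (i) would fail to be sufficient whenever $bh\neq 0$, contradicting the very statement you are proving. (For the record, $64\,q(1/4)=b^2+c^2+e^2-1-b^2e^2=e^2(c^2+d^2)-d^2$, which is the quantity you only conjecture ``should collapse nicely.'')

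The second, larger gap is that the necessity direction is never actually argued; you only describe hoped-for outcomes and name the obstacle. The paper's resolution is short but specific: for $h\neq0$ (the case $h=0$ being absorbed by the already-proved part (ii) of Proposition~\ref{th:rii}), assume $d\neq0$; then $d^2h+ceg=0$ forces $e\neq0$, so \eqref{cdef} gives $c^2=d^2f^2/e^2$, and substituting into $e^2(c^2+d^2)=d^2$ yields $e^2+f^2=1$, whence the second identity in \eqref{length} gives $g=h=0$, a contradiction. Hence $d=0$, and then $e^2c^2=0$ together with the Convention gives $c=0$. Without this (or an equivalent) chain, and with the incorrect form of $p(1/4)$, the proposal does not establish the theorem.
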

\begin{proof}
Due to already proven statement (ii) of Proposition~\ref{th:rii}, it suffices to consider the case $h\neq 0$ and show that then $c=d=0$ is the criterion for $C(A)$ to contain $\mathcal C_{1/2}$. 

Plugging $\rho=1/4$ into \eqref{pqrho} while taking into consideration the first equality in \eqref{length} yields
\begin{numcases}{}
d^2h+ceg=0,  \label{pq141} \\
 e^2(c^2+d^2)-d^2=0. \label{pq142}
\end{numcases}
So, we just need to show that the system \eqref{pq141}--\eqref{pq142} has only the trivial solution $c=d=0$. 

Suppose $d\neq 0$.  Then $e\neq 0$ due to \eqref{pq141}. From \eqref{cdef} we therefore conclude that $c^2=d^2f^2/e^2$ and then we can rewrite \eqref{pq142} as 
\[ e^2\left(d^2+\frac{d^2f^2}{e^2}\right)=d^2. \]
From here, $e^2+f^2=1$, and so the second formula in \eqref{length} implies $g=h=0$. This is a contradiction, and so in fact $d=0$. But then \eqref{pq142} boils down to $ec=0$ and, according to the Convention, $c=0$. \end{proof} 

According to Corollary~\ref{th:1/2}, matrices satisfying conditions of Theorem~\ref{th:r1/2v1} are unitarily reducible to a matrix with $J_2$ as one of its diagonal blocks. To describe this unitary similarity explicitly, we need to introduce a two-parameter family of matrices
\eq{Ath} A_{t,h}:=\begin{bmatrix} 0 & 0 & 1 & 0 \\ 0 & 0 & 0 & t \\ 0 & 0 & 0 & \sqrt{1-t^2-h^2} \\
0 & 0 & 0 & h \end{bmatrix}, \quad t,h\geq 0, t^2+h^2\leq 1.  \en 
\begin{prop}\label{th:Athred}Let $A$ be of the form \eqref{6x61} satisfying conditions of Theorem~\ref{th:r1/2v1}. Then it is unitarily similar to $J_2\oplus A_{be,h}$. \end{prop}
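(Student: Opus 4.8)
The plan is to establish the claimed unitary similarity $A \cong J_2 \oplus A_{be,h}$ by producing an explicit block-diagonalizing unitary in each of the two cases of Theorem~\ref{th:r1/2v1}, and then checking that the non-$J_2$ summand matches $A_{be,h}$ after a suitable adjustment. By Corollary~\ref{th:1/2}, we already know that $A$ is unitarily reducible with a $J_2$ block, but we need the \emph{specific} form of the complementary block, so the real work is bookkeeping rather than an existence argument. Throughout I will write $A$ in the form \eqref{6x61} with the sign and length normalizations \eqref{elts}--\eqref{cdef}, and keep in mind the Convention that $e=0$ forces $c=0$.

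First I would treat case (i), $c=d=0$. Here \eqref{cdef} is automatic and the second relation in \eqref{length} reads $e^2+f^2+g^2+h^2=1$, while the first gives $b^2=1$, hence $b=1$. The matrix then has zeros in the $(3,5)$ and $(4,5)$ positions, so columns/rows $2,5$ already carry the pattern of a $J_2$-block sitting inside $A$. I would exhibit the reducing subspace explicitly: the span of the second and fifth coordinate vectors should split off as $J_2$ (since $be=e$ appears as the $(2,5)$ entry and $b=1$), leaving the $4$-dimensional complement spanned by coordinates $1,3,4,6$. Restricting $A$ to that complement and reordering those basis vectors should reproduce $A_{be,h}$ with $be=e$; the entry $\sqrt{1-t^2-h^2}$ in \eqref{Ath} is forced by the column-length condition \eqref{length}, which is exactly where $e^2+f^2+g^2+h^2=1$ (equivalently the defining constraint $t^2+h^2\le 1$ of $A_{t,h}$) gets used.

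Next I would treat case (ii), $g=h=0$. Now $A_{be,h}=A_{be,0}$, so the target $4\times4$ block is nilpotent. Since $C(A)\supset\mathcal C_{1/2}$ by Theorem~\ref{th:r1/2v1} and $\ker C\cap\ker C^*\neq\{0\}$ by (the proof of) Theorem~\ref{thm:mat_poly_general}, there is a unit vector $u$ annihilated by both $C$ and $C^*$; the block-diagonal unitary similarity from the proof of Theorem~\ref{thm:mat_poly_general} simultaneously replaces $C,B$ by $[0]\oplus C_0$, $[1]\oplus B_0$, splitting off $J_2$. The remaining step is to identify $[0]\oplus$(this $4\times4$ remainder) with $A_{be,0}$: I would compute the two nonzero columns of the complementary block, use \eqref{cdef} (here $df=0$ since $ce=0$ is forced once $g=0$ collapses the orthogonality relation, cf.\ the Convention) and the length identities to pin down the single free parameter, and absorb any residual phase by a diagonal unitary as in the proof of Proposition~\ref{th:2can}.

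The main obstacle I anticipate is case (ii), because there the complementary block is genuinely $4$-dimensional and nilpotent, so unlike case (i) the $J_2$ summand is not sitting in two ``obvious'' coordinate rows; one must first locate the common null vector $u$ of $C$ and $C^*$ and perform the $2,3$-rotation (of the type used just before the Convention, replacing $b$ by $\sqrt{b^2+c^2}$) to diagonalize the relevant $2\times2$ corner before the $J_2$ block becomes visible. Verifying that the leftover $4\times4$ matrix is \emph{exactly} $A_{be,0}$ and not merely unitarily similar to something of the same shape will require carefully tracking that the product $be$ is the invariant that survives (the other entries being determined by it through \eqref{length}), and that all the introduced unitaries are simultaneously block-diagonal so the already-extracted $J_2$ is preserved. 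This is routine linear algebra, but it is the place where a sign or an index error would do the most damage.
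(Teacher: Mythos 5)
Your overall strategy (explicit block-diagonalizing unitaries in each of the two cases, then matching the complementary block with $A_{be,h}$) is the same as the paper's, but both cases as written contain concrete errors. In case (i) ($c=d=0$, so $b=1$), the span of $e_2$ and $e_5$ is \emph{not} a reducing subspace unless $g=0$: the $(5,6)$ entry of \eqref{6x61} is $g$, so $Ae_6$ has a component $ge_5$ inside your putative $J_2$-block. (Also, the $(2,5)$ entry is $b$, not $be$; the product $be=e$ sits at $(3,6)$.) Consequently the matrix obtained by restricting to coordinates $1,3,4,6$ has third-column entry $f=\sqrt{1-e^2-h^2-g^2}$ rather than the required $\sqrt{1-e^2-h^2}$, so it is not $A_{e,h}$ when $g\neq 0$. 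A preliminary rotation is unavoidable here: the paper first rotates in the $(4,5)$ rows/columns to replace $(f,g)$ by $(0,\sqrt{f^2+g^2})$, compensates by the same rotation in the $(1,2)$ rows/columns (which restores columns $4,5$ because rows $1,2$ have no other nonzero entries), and only then does a coordinate $J_2$-block split off, with complement equal to $A_{e,h}$.

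In case (ii) ($g=h=0$), your parenthetical claim that ``$df=0$ since $ce=0$ is forced once $g=0$'' is false: the orthogonality relation \eqref{cdef} is $ce+df=0$ and does not involve $g$ at all, and there are genuine case (ii) matrices with $cdef\neq 0$ (e.g.\ $b=0$, $c=d=e=1/\sqrt2$, $f=-1/\sqrt2$). Indeed, the paper's argument explicitly reduces the subcase $d=f=0$ to case (i) and then must handle $d^2+f^2\neq 0$ with all of $b,c,d,e,f$ potentially nonzero; the block-diagonal unitary it exhibits mixes rows $2,3$ (with the rotation $\frac{1}{\sqrt{d^2+f^2}}\left[\begin{smallmatrix} bf & de-cf\\ cf-de & bf\end{smallmatrix}\right]$) and rows $5,6$, and the verification that the outcome is exactly $J_2\oplus A_{be,0}$ rests on the identity \eqref{id2}, $b^2e^2+d^2+f^2=1$. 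Your plan of ``locating the common null vector of $C$ and $C^*$'' is the right existence argument (it is how Corollary~\ref{th:1/2} is proved), but on the erroneous assumption $df=0$ the bookkeeping you defer would not close; you need the general rotation above and identity \eqref{id2} to pin the surviving parameter down to $be$.
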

Note that $A_{be,h}$ actually is $A_{e,h}$ in case (i) and $A_{be,0}$ in case (ii). 
\begin{proof}In case (i) a block diagonal unitary similarity concentrated in (4,5) rows/columns can be used to replace $g\rightarrow \sqrt{f^2+g^2}, \ f\rightarrow 0$. Then another unitary similarity, this time concentrated in (1,2) rows/columns, restores the 4,5 columns of $A$ without changing any other entries. The resulting matrix after the permutational similarity generated by the $(234)\rightarrow (423)$ cycle yields $J_2\oplus A_{e,h}$.

Moving to case (ii), note first of all that we may exclude the situation $f=d=0$. Indeed, due to \eqref{cdef} this would imply $ce=0$ and then, by convention, $c=0$, which is covered by case (i). 

 Supposing $d^2+f^2\neq 0$, let us introduce 
\[ U= \frac{1}{\sqrt{d^2+f^2}} \begin{bmatrix} 0 & 0 & \sqrt{d^2+f^2} & 0 & 0 & 0\\
bf & 0 & 0 & de-cf & 0 & 0\\
cf-de & 0 & 0 & bf & 0 & 0\\
0 & 0 & 0 & 0 & \sqrt{d^2+f^2} & 0\\
0 & f & 0 & 0 & 0 & d\\
0 & -d & 0 & 0 & 0 & f\end{bmatrix}. \]
Clearly, $U$ is unitary. Direct computations, involving \eqref{id2} among other things, show that $U^*AU=J_2\oplus A_{be,0}$. 
\end{proof} 
\begin{cor} \label{th:Kip1}In the setting of Proposition~\ref{th:Athred}, $C(A)=\mathcal C_{1/2}\cup C(A_{be,h})$.  \end{cor}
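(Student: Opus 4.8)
The plan is to reduce the statement to the behavior of the Kippenhahn curve under direct sums, exploiting the unitary similarity already established in Proposition~\ref{th:Athred}. First I would invoke that proposition to write $A$ as unitarily similar to $J_2\oplus A_{be,h}$, and recall from the opening of Section~\ref{s:prel} that both $W(A)$ and $C(A)$ are invariant under unitary similarities. Hence it suffices to compute $C(J_2\oplus A_{be,h})$ and identify it with $\mathcal C_{1/2}\cup C(A_{be,h})$.

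Next I would establish the general principle that the Kippenhahn curve of a block diagonal matrix is the union of the Kippenhahn curves of its blocks. The key observation is that the hermitian part respects the block decomposition: for $M=M_1\oplus M_2$ one has $\re(e^{i\theta}M)=\re(e^{i\theta}M_1)\oplus\re(e^{i\theta}M_2)$, so the Kippenhahn polynomial \eqref{chpo} factors as $P_M(\lambda,\theta)=P_{M_1}(\lambda,\theta)\,P_{M_2}(\lambda,\theta)$. Consequently the eigenvalues $\lambda_j(\theta)$ entering the family \eqref{env} for $M$ are precisely the union of those for $M_1$ and for $M_2$, and since $C(M)$ is by definition the envelope of \eqref{env}, the envelope splits as $C(M)=C(M_1)\cup C(M_2)$.

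Finally I would apply this principle with $M_1=J_2$ and $M_2=A_{be,h}$, invoking the fact already recorded in Section~\ref{s:pifk} that $C(J_2)=\mathcal C_{1/2}$, to obtain $C(A)=\mathcal C_{1/2}\cup C(A_{be,h})$, as claimed.

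The argument is essentially routine, being a direct consequence of Proposition~\ref{th:Athred} together with the additivity of Kippenhahn curves under direct sums. The only step meriting a word of justification is that additivity/factorization claim; since it is standard in the Kippenhahn-curve literature it could alternatively be cited rather than reproved, so I do not anticipate any genuine obstacle here.
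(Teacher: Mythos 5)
Your proposal is correct and follows exactly the route the paper intends: the corollary is stated without proof precisely because it is an immediate consequence of the unitary similarity $A\simeq J_2\oplus A_{be,h}$ from Proposition~\ref{th:Athred}, the factorization of the Kippenhahn polynomial over direct summands (hence $C(M_1\oplus M_2)=C(M_1)\cup C(M_2)$), and the fact $C(J_2)=\mathcal C_{1/2}$ recorded in Section~\ref{s:pifk}. No gap here.
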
 
Due to Proposition~\ref{th:Athred} and Corollary~\ref{th:Kip1}, further unitary (ir)reducibility of matrices \eqref{Ath} is  of interest. 
\begin{prop}
\label{th:Athredfur}Matrices \eqref{Ath} are unitarily reducible if (at least) one of the conditions $t=0,\ t=1$, or $t^2+h^2=1$ holds, and unitarily irreducible otherwise.  \end{prop}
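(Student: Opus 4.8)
The plan is to exhibit $A_{t,h}$ in the block form \eqref{BC} and apply the Gau--Wang--Wu criterion of Proposition~\ref{th:uirr}. First I would observe that, since $t^2+h^2+(1-t^2-h^2)=1$, at least one of $t,\ h,\ s:=\sqrt{1-t^2-h^2}$ is nonzero, so the kernel of $A_{t,h}$ is exactly $\Span(e_1,e_2)$ and the matrix is already written in the form \eqref{BC} with $m=2$:
\[ A_{t,h}=\begin{bmatrix} 0 & B \\ 0 & C\end{bmatrix},\qquad B=\begin{bmatrix} 1 & 0 \\ 0 & t\end{bmatrix},\qquad C=\begin{bmatrix} 0 & s \\ 0 & h\end{bmatrix}. \]
By Proposition~\ref{th:uirr}, $A_{t,h}$ is unitarily irreducible if and only if $B$ has full rank and $C$ is unitarily irreducible; equivalently, it is unitarily reducible if and only if $B$ is rank-deficient or $C$ is unitarily reducible.

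Next I would treat the two blocks separately. The diagonal block $B=\diag(1,t)$ fails to have full rank precisely when $t=0$. For $C$, I would invoke the fact that a $2\times2$ matrix is unitarily reducible if and only if it is normal (the only nontrivial orthogonal decomposition of $\C^2$ into reducing subspaces being $1\oplus1$). A direct computation gives
\[ CC^*-C^*C=\begin{bmatrix} s^2 & sh \\ sh & -s^2\end{bmatrix}, \]
which vanishes exactly when $s=0$, i.e.\ when $t^2+h^2=1$. Hence $C$ is unitarily irreducible precisely when $t^2+h^2\neq1$.

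Combining, $A_{t,h}$ is unitarily reducible if and only if $t=0$ or $t^2+h^2=1$, and irreducible otherwise. Since the standing constraints $t,h\geq0$ and $t^2+h^2\leq1$ force $t=1$ to imply $h=0$, and hence $t^2+h^2=1$, the condition $t=1$ is subsumed by $t^2+h^2=1$; thus the three conditions listed in the statement describe exactly the reducible case, while in the complementary regime $t\neq0,\ t^2+h^2<1$ both hypotheses of Proposition~\ref{th:uirr} hold and $A_{t,h}$ is irreducible. The argument is short, so there is no serious obstacle; the only points requiring care are identifying the correct $m=2$ block partition, carrying out the normality test for $C$, and noting that $t=1$ is redundant given the parameter domain.
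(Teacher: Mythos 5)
Your proof is correct, and it reaches the same dichotomy as the paper, but it handles the reducible half differently. For the irreducible case your argument coincides with the paper's: both identify the blocks $B=\diag(1,t)$ and $C=\left[\begin{smallmatrix}0 & s\\ 0 & h\end{smallmatrix}\right]$ in the partition \eqref{BC} and invoke Proposition~\ref{th:uirr}; your explicit commutator computation $CC^*-C^*C=\left[\begin{smallmatrix}s^2 & sh\\ sh & -s^2\end{smallmatrix}\right]$ is a clean justification of the unitary irreducibility of $C$, which the paper merely asserts. For the reducible cases, however, the paper does not use the ``only if'' half of Proposition~\ref{th:uirr} at all: it exhibits explicit permutational similarities, showing $A_{0,h}\sim(0)\oplus B$ with $B$ as in \eqref{B}, $A_{1,0}\sim J_2\oplus J_2$, and $A_{t,h}\sim J_2\oplus\left[\begin{smallmatrix}0 & t\\ 0 & h\end{smallmatrix}\right]$ when $t^2+h^2=1$. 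These concrete decompositions are not incidental --- they are reused later (in Propositions~\ref{th:Kipfur} and \ref{th:nrc1/2}) to describe the Kippenhahn curves, so the paper's route buys more than the bare reducibility statement. Your route is shorter and more uniform, but one caveat deserves mention: the ``only if'' direction of Proposition~\ref{th:uirr} is really a statement about partial isometries (for a general matrix in the form \eqref{BC} with $m=\dim\ker A$, a reducible $C$ need not force $A$ to be reducible), so your argument implicitly relies on the fact that $A_{t,h}$ is a partial isometry --- which it is, since its two nonzero columns are orthonormal --- and it would be worth saying so explicitly. Your observation that $t=1$ is subsumed by $t^2+h^2=1$ on the parameter domain is correct and harmless.
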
 
\begin{proof} It is easy to see that $A_{0,h}$ is permutationally similar to $(0)\oplus B$, where 
\eq{B} B= \begin{bmatrix} 0 & 1 & 0 \\ 0 & 0 & \sqrt{1-h^2} \\ 0 & 0 & h\end{bmatrix}. \en
This matrix is nothing but $J_3$ if $h=0$ and $J_2\oplus (1) $ if $h=1$. On the other hand, for $0<h<1$ the matrix \eqref{B} is unitarily irreducible. 

The matrix $A_{1,0}$ is permutationally similar to $J_2\oplus J_2$, while for $t^2+h^2=1$ we have 
\eq{Ath1} A_{t,h}= \begin{bmatrix} 0 & 0 & 1 & 0\\ 0 & 0 & 0 & t \\ 0 & 0 & 0 & 0 \\ 0 & 0 & 0  & h\end{bmatrix} 
\text{ permutationally similar to } J_2\oplus \begin{bmatrix} 0 & t \\ 0 & h \end{bmatrix}. \en 

In the remaining situation $t\neq 0,1$, $t^2+h^2\neq 1$, observe that in the partition \eqref{BC} of $A_{t,h}$ we have $B=\diag[1,t]$ and $C=\begin{bmatrix} 0 & \sqrt{1-t^2-h^2} \\ 0 & h\end{bmatrix}$. So, $B$ is full rank while $C$ is unitarily irreducible. Unitary irreducibility of $A_{t,h}$ then follows from Proposition~\ref{th:uirr}.\end{proof} 
Proposition~\ref{th:Athredfur} (or rather its proof) combined with Proposition~\ref{th:Athred} allow to draw some further conclusions concerning the structure of Kippenhahn curves for partial isometries satisfying conditions of Theorem~\ref{th:r1/2v1}, on top of the inclusion $C(A)\supset\mathcal C_{1/2}$.
\begin{prop}\label{th:Kipfur} Let $A$ be as in Theorem~\ref{th:r1/2v1}. 
\begin{enumerate}
\item If $c=d=e=0$, then $C(A)=\{0\}\cup \mathcal C_{1/2}\cup C(B)$, where $B$ is given by \eqref{B}. In particular, $C(A)=\{0\}\cup \mathcal C_{1/2}\cup \mathcal C_{\sqrt{2}/2}$ if $h=0$, and $C(A)$ consists of two copies of $\mathcal C_{1/2}$ and the pair of points $\{0,1\}$ if $h=1$.
\item If $c=d=0$, $e^2+h^2=1$, $h\neq 1$, then $C(A)$ consists of two copies of $\mathcal C_{1/2}$ and the ellipse $E$ with the foci $0, h$ and the major axis of length one.
\item If $h=0$, then $C(A)=\mathcal C_{r_1}\cup \mathcal C_{1/2}\cup \mathcal C_{r_3}$, where
$r_{1,3}=\frac{\sqrt{1\mp\sqrt{1-b^2e^2}}}{2}$. 
\end{enumerate}
\end{prop}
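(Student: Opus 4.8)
The plan is to leverage the explicit reductions already established. By Proposition~\ref{th:Athred}, each case of Theorem~\ref{th:r1/2v1} places $A$ unitarily similar to $J_2 \oplus A_{be,h}$, and by Corollary~\ref{th:Kip1} we have $C(A) = \mathcal{C}_{1/2} \cup C(A_{be,h})$. Since $J_2$ always contributes exactly one copy of $\mathcal{C}_{1/2}$, the entire problem reduces to computing $C(A_{t,h})$ for the relevant parameter values $t = be$ and $h$. I would therefore organize the proof around Proposition~\ref{th:Athredfur}, which tells me precisely when $A_{t,h}$ decomposes further, and I would handle each enumerated claim by specializing to the corresponding $(t,h)$.

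For claim (1), the hypothesis $c=d=e=0$ forces $t=be=0$, so $A_{0,h}$ is relevant. The proof of Proposition~\ref{th:Athredfur} shows $A_{0,h}$ is permutationally similar to $(0) \oplus B$ with $B$ given by \eqref{B}, so $C(A_{be,h}) = \{0\} \cup C(B)$, yielding $C(A) = \{0\} \cup \mathcal{C}_{1/2} \cup C(B)$. For the special subcases: when $h=0$, $B = J_3$, whose Kippenhahn curve is the single circle $\mathcal{C}_{\sqrt{2}/2}$ (the numerical range of $J_3$ is the disk of radius $\sqrt{2}/2$, a standard fact); when $h=1$, $B = J_2 \oplus (1)$, contributing a second copy of $\mathcal{C}_{1/2}$ together with the focal point $\{1\}$, which with the already-present $\{0\}$ gives the pair $\{0,1\}$. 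For claim (2), $c=d=0$ again gives $t=be$, but now the constraint $e^2+h^2=1$ with $h \neq 1$ means $t^2+h^2 = b^2e^2 + h^2 = e^2+h^2 = 1$ only if $b=1$; one must verify via \eqref{length} that $c=d=0$ indeed forces $b=1$, after which \eqref{Ath1} applies. Then $A_{t,h}$ is permutationally similar to $J_2 \oplus \left[\begin{smallmatrix} 0 & t \\ 0 & h \end{smallmatrix}\right]$, giving the second copy of $\mathcal{C}_{1/2}$ plus the $2\times 2$ block whose Kippenhahn curve is the ellipse $E$ with foci $0,h$ and major axis of length $\sqrt{t^2+h^2}=1$.

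For claim (3), $h=0$ but now $c,d$ need not vanish, so I cannot invoke the full decomposition of $A_{t,h}$; instead I return to the Kippenhahn polynomial \eqref{chpospe}--\eqref{pqrho1}. Setting $h=0$ makes $p(\rho) = -\frac{1}{16}ceg$, and since case (ii) of Theorem~\ref{th:r1/2v1} is $g=h=0$, we get $p \equiv 0$, so the Kippenhahn polynomial reduces to $q(\rho)$ alone with $q(\rho) = \rho^3 - \frac{3}{4}\rho^2 + \frac{1}{16}(c^2+b^2+e^2+1)\rho - \frac{1}{64}b^2e^2$. One factor $\rho = 1/4$ must divide out, reflecting the guaranteed $\mathcal{C}_{1/2}$; dividing $q$ by $(\rho - 1/4)$ leaves a quadratic whose roots I solve explicitly. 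The computation simplifies because with $g=h=0$ identity \eqref{id2} reads $b^2e^2+d^2+f^2=1$, and combined with \eqref{length} this pins down $c^2+b^2+e^2+1$ in terms of $b^2e^2$; carrying this through yields $r_{1,3} = \frac{1}{2}\sqrt{1 \mp \sqrt{1-b^2e^2}}$.

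The main obstacle I anticipate is claim (3): unlike (1) and (2), it does not follow from a clean block decomposition and requires the explicit factoring of the cubic $q$ and a careful use of the Lemma~\ref{th:ident} identities to collapse the coefficient $c^2+b^2+e^2+1$ into a function of $b^2e^2$ alone. Verifying that the discriminant under the inner radical is exactly $1-b^2e^2$ (and nonnegative) is the delicate bookkeeping step; everything else is either a direct appeal to Proposition~\ref{th:Athredfur} or the recognition of standard Kippenhahn curves for $J_3$ and for $2\times 2$ blocks.
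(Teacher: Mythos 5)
Your proposal is correct, and for claims (1) and (2) it follows essentially the same route as the paper: both rest on Proposition~\ref{th:Athred}, Corollary~\ref{th:Kip1}, and the explicit decompositions of $A_{t,h}$ worked out in the proof of Proposition~\ref{th:Athredfur}. The genuine divergence is in claim (3). The paper disposes of it by citing an external result (\cite[Proposition 5]{HeSpitSu}) for $C(A_{t,0})$, whereas you compute the answer directly: with $h=0$ the coefficient $p$ in \eqref{chpospe} vanishes identically, the Kippenhahn polynomial collapses to $q(\lambda^2)$ alone, and the three circles are read off from the roots of the cubic $q$, one of which is $\rho=1/4$ while the other two are found by Vieta once the coefficient $c^2+b^2+e^2+1$ is rewritten as $2+b^2e^2$. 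This is a self-contained argument that the paper's proof does not supply, and your bookkeeping checks out (sum $3/4$, pairwise sum $(2+b^2e^2)/16$, product $b^2e^2/64$). One small patch: you reach the identity $b^2+c^2+e^2=1+b^2e^2$ via \eqref{id2}, which is stated under the hypothesis $g=h=0$; but claim (3) only assumes $h=0$, and within case (i) of Theorem~\ref{th:r1/2v1} one can have $g\neq 0$ (with $c=d=0$ instead), where \eqref{id2} fails. The clean fix is to invoke \eqref{id1}, which needs only $h=0$ and gives $b^2+c^2+e^2=1+b^2e^2-d^2g^2$; since either $d=0$ (case (i)) or $g=0$ (case (ii)), the term $d^2g^2$ drops out in both cases and your computation goes through uniformly. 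A cosmetic remark: $C(J_3)$ is $\{0\}\cup\mathcal C_{\sqrt2/2}$ rather than the circle alone, but since the degenerate component $\{0\}$ is already present in the union this does not affect the stated conclusion.
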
 

\begin{proof} The only claim requiring an explanation is {\em 3.} We refer here to Corollary~\ref{th:Kip1} in combination with \cite[Proposition 5]{HeSpitSu}, where it was shown (up to notation used) that $C(A_{t,0})=\mathcal C_{\sqrt{1-\sqrt{1-t^2}}/2}\cup \mathcal C_{\sqrt{1+\sqrt{1-t^2}}/2}$.\end{proof} 

We are now able to make a statement on the structure  of $W(A)$ for matrices that meet the conditions of Theorem~\ref{th:r1/2v1}. Of course, our main tool here is the equality \eqref{nuco}.
\begin{thm}\label{th:nrc1/2} Let $A$ satisfy conditions of Theorem~\ref{th:r1/2v1}. Then $W(A)$ is 
\begin{enumerate}
\item the circular disk $\left\{z\colon \abs{z}\leq \frac{\sqrt{1+\sqrt{1-b^2e^2}}}{2}\right\}$ if $h=0$;
\item $\conv\{\mathcal C_{1/2},1\}$ if $h=1$; 
\item $\conv\{\mathcal C_{1/2}, E\}$  if $e^2+h^2=1, h\neq 0,1$;
\item $W(B)$, where $B$ is given by \eqref{B}, if $e=0, h\neq 0,1$;
\item $W(A_{e,h})$ if $e, h\neq 0,\ e^2+h^2\neq 1$.
\end{enumerate}
\end{thm}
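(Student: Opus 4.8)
The plan is to leverage the equality $W(A)=\conv C(A)$ from \eqref{nuco} together with the detailed descriptions of $C(A)$ already assembled in Proposition~\ref{th:Kipfur}. Since Theorem~\ref{th:r1/2v1} tells us that the hypotheses split into case (i) $c=d=0$ and case (ii) $g=h=0$, I would organize the proof around the parameter $h$ and the vanishing of various entries, matching each of the five listed sub-cases to the corresponding structural description of the Kippenhahn curve, and then simply take convex hulls.

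First I would dispose of item \emph{1} (the case $h=0$). Here Proposition~\ref{th:Athred} gives $A\sim J_2\oplus A_{be,0}$, and Corollary~\ref{th:Kip1} together with part \emph{3} of Proposition~\ref{th:Kipfur} yields $C(A)=\mathcal C_{r_1}\cup\mathcal C_{1/2}\cup\mathcal C_{r_3}$ with $r_{1,3}=\tfrac{1}{2}\sqrt{1\mp\sqrt{1-b^2e^2}}$. Since $r_3\geq 1/2\geq r_1$, all three concentric circles are contained in the disk of radius $r_3$, so the convex hull is exactly $\{z\colon\abs{z}\leq r_3\}$, which is the claimed disk. For item \emph{2} ($h=1$, forcing $e=0$ by the second equality in \eqref{length}, hence $c=0$ by Convention so we are in case (i) with $be=0$), part \emph{1} of Proposition~\ref{th:Kipfur} says $C(A)$ consists of two copies of $\mathcal C_{1/2}$ plus the points $\{0,1\}$; the convex hull is then $\conv\{\mathcal C_{1/2},1\}$ as asserted. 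For item \emph{3} ($e^2+h^2=1$, $h\neq 0,1$, which under $c=d=0$ places us in case (i)), part \emph{2} of the same Proposition identifies the non-circular component as the ellipse $E$ with foci $0,h$ and major axis one; taking the convex hull of the two copies of $\mathcal C_{1/2}$ together with $E$ gives $\conv\{\mathcal C_{1/2},E\}$.

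For items \emph{4} and \emph{5} I would again invoke Proposition~\ref{th:Athred} to reduce to $J_2\oplus A_{be,h}$ and then use the general fact that for a direct sum $W(X\oplus Y)=\conv(W(X)\cup W(Y))$. In item \emph{4} ($e=0$, $h\neq 0,1$, so $c=0$ and $A_{be,h}=A_{0,h}$), Proposition~\ref{th:Athredfur} shows $A_{0,h}$ is permutationally similar to $(0)\oplus B$ with $B$ as in \eqref{B}; since $W(J_2)=\{\abs{z}\leq 1/2\}\subseteq W(B)$ (the latter disk being the $\mathcal C_{1/2}$-bounded region already sitting inside $W(B)$, as $B$ itself contains a $J_2$-type shift of norm one whose numerical radius dominates $1/2$), the hull collapses to $W(B)$. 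In item \emph{5} ($e,h\neq 0$, $e^2+h^2\neq 1$, giving $A_{be,h}=A_{e,h}$ unitarily irreducible), the summand $J_2$ contributes the disk of radius $1/2$, which by Proposition~\ref{th:gen6x61}(ii) and the fact that $w(A_{e,h})\geq 1/2$ lies inside $W(A_{e,h})$; hence $W(A)=W(A_{e,h})$.

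The main obstacle I anticipate is \emph{not} the convex-hull bookkeeping but rather verifying, cleanly and uniformly, that in items \emph{4} and \emph{5} the contribution of the $J_2$ block (the disk $\{\abs{z}\leq 1/2\}$) is genuinely absorbed by $W(B)$ respectively $W(A_{e,h})$, so that the union's convex hull equals the larger piece alone. This requires confirming $w(B)\geq 1/2$ and $w(A_{e,h})\geq 1/2$ and, more delicately, that the smaller disk is actually \emph{contained} in (not merely overlapping with) the larger numerical range; the former follows from $\norm{B}=\norm{A_{e,h}}=1$ giving numerical radius at least $1/2$, and the latter from the symmetry about the real axis established in Proposition~\ref{th:gen6x61}(i) together with the centering of the circular component at the origin, but the containment argument is the step I would write out most carefully. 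The remaining sub-cases are essentially transcriptions of Proposition~\ref{th:Kipfur} under $\conv$, and I would keep them terse.
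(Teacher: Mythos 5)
Your overall route coincides with the paper's: items \emph{1}--\emph{3} are read off from Proposition~\ref{th:Kipfur} by taking convex hulls, and items \emph{4}--\emph{5} reduce to showing that the disk $\{z\colon\abs{z}\leq 1/2\}$ contributed by the $J_2$ summand is absorbed into $W(B)$, respectively $W(A_{e,h})$. That part is fine.

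The one place where your argument would not go through as written is the justification you offer for the absorption step. You propose to derive the containment $\{z\colon\abs{z}\leq 1/2\}\subseteq W(B)$ (resp.\ $\subseteq W(A_{e,h})$) from the symmetry of the numerical range about the real axis together with $w\geq 1/2$; this is insufficient, since a region symmetric about $\R$ with numerical radius $1/2$ need not contain the full disk of radius $1/2$ (think of a flat ellipse centered at the origin with horizontal semi-axis $1/2$). The clean argument --- and the one the paper uses --- is that $J_2$ sits inside both $B$ and $A_{e,h}$ as a \emph{principal submatrix} (rows/columns $1,2$ of $B$, rows/columns $1,3$ of $A_{e,h}$), so $W(J_2)\subseteq W(B)$ and $W(J_2)\subseteq W(A_{e,h})$ by the standard fact that the numerical range of a compression is contained in that of the whole matrix. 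You brush against this when you say $B$ ``contains a $J_2$-type shift,'' but you then retreat to the numerical-radius/symmetry reasoning; replace that with the compression argument and the proof is complete.
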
 
\begin{proof} Claims {\em 1}--{\em 3} follow directly from the respective descriptions of $C(A)$ in Proposition~\ref{th:Kipfur}. For claims {\em 4}, {\em 5} we only need to observe, in addition to Proposition~\ref{th:Kipfur}'s statements, that both matrices $B$ and $A_{e,h}$ contain $J_2$ as their principal submatrices. Consequently, $\mathcal C_{1/2}$ is absorbed by them. \end{proof}
Several additional comments on the shape of $W(A)$ in cases {\em 2}---{\em 5} are in order. 

Clearly, $W(A)$ is ice cream cone-shaped in case {\em 2}. For {\em 3}, observe that the rightmost point of $E$ is 
$((1+h)/2,0)$, and so it lies outside of $\mathcal C_{1/2}$. The shape of $W(A)$ is somewhat like ice cream cone, but with a ``rounded'' corner point. In case {\em 4} situation, the numerical range of $B$ (and therefore of $A$) has a so called {\em ovular} shape, as can be easily verified invoking tests from \cite{KRS}. 
 Namely, $C(B)$ does not contain an elliptical component since conditions of \cite[Theorem 2.2]{KRS} are not satisfied, and $W(B)$ has no flat portions on the boundary because $B$ is unitarily irreducible and $\re( e^{i\theta} B)$ has three distinct eigenvalues for every $\theta$ (see \cite[Proposition 3.2]{KRS}). With these two options eliminated, the ovular shape is the only one that reemains.
\par Finally, in the case {\em 5} setting, $C(A_{e,h})$ does not contain circular components. Indeed,  since $A_{e,h}$ is unitarily irreducible, Theorem~\ref{thm:mat_poly_general} implies that $\mathcal C_{1/2}$ cannot be a component of its Kippenhahn curve. On the other hand, $\rho=1/4$ is the only solution of \eqref{pqrho}
for the respective matrix $A$, and so no other circle can be a part of $C(A_{e,h})$ either.

From here we immediately obtain the following 
\begin{cor} \label{th:circ}In the setting of Theorem~\ref{th:r1/2v1}, the numerical range of the matrix $A$ is a circular disk if and only if $A$ is nilpotent. \end{cor}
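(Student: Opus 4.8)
The plan is to leverage the complete classification of $W(A)$ shapes already assembled in Theorem~\ref{th:nrc1/2}, since Corollary~\ref{th:circ} is really just the job of reading off which of the five listed cases produce a genuine circular disk. First I would recall that by Proposition~\ref{th:Athred} the hypotheses of Theorem~\ref{th:r1/2v1} split into case (i), where $A$ is unitarily similar to $J_2\oplus A_{e,h}$, and case (ii), where $A\sim J_2\oplus A_{be,0}$; in either situation $A$ is nilpotent precisely when $h=0$, because the only nonzero diagonal entry available in the canonical form \eqref{6x61} is $h$ (recall $g=0$ in case (ii), and under the Theorem~\ref{th:r1/2v1} conditions the spectrum of $A$ is $\{0,h\}$ with $h$ the lone candidate for a nonzero eigenvalue). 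So the statement to prove is equivalent to: among cases \emph{1}--\emph{5} of Theorem~\ref{th:nrc1/2}, $W(A)$ is a circular disk exactly in case \emph{1} (i.e.\ $h=0$).

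The forward implication (nilpotent $\Rightarrow$ disk) is immediate: if $h=0$ we are in case \emph{1} of Theorem~\ref{th:nrc1/2}, which states outright that $W(A)$ is the circular disk of radius $\tfrac{1}{2}\sqrt{1+\sqrt{1-b^2e^2}}$. For the converse I would go through cases \emph{2}--\emph{5} and rule each one out, using the descriptions recorded immediately after the proof of Theorem~\ref{th:nrc1/2}. In case \emph{2} the set $\conv\{\mathcal C_{1/2},1\}$ is an ice-cream-cone shape with a genuine corner at the point $1$, hence not a disk. In case \emph{3} the rightmost point $((1+h)/2,0)$ of the ellipse $E$ lies strictly outside $\mathcal C_{1/2}$, so $\partial W(A)$ is the ellipse $E$ joined to the arc of $\mathcal C_{1/2}$ by two line segments; the presence of these flat portions (or equivalently the non-constant boundary curvature) prevents $W(A)$ from being a disk.

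Cases \emph{4} and \emph{5} are handled by the ovular/irreducibility remarks already made in the text: in case \emph{4}, $W(A)=W(B)$ has ovular shape, and in particular $\partial W(B)$ is a smooth non-circular curve (it is neither an ellipse nor does it have flat portions, per the cited tests from \cite{KRS}), so it cannot be a circle; in case \emph{5}, $C(A_{e,h})$ was shown to contain no circular component whatsoever, and since $\partial W(A_{e,h})$ would have to coincide with such a component were $W(A)$ a disk, this case is excluded as well. The main obstacle, such as it is, lies in case \emph{5}: one must be careful to argue not merely that $C(A_{e,h})$ omits $\mathcal C_{1/2}$, but that it omits \emph{every} circle, which is exactly what the observation that $\rho=1/4$ is the unique solution of \eqref{pqrho} (invoking Theorem~\ref{th:def2crit}) delivers. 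With all four of cases \emph{2}--\emph{5} eliminated, only $h=0$ remains, which is precisely the nilpotency condition, completing the equivalence.
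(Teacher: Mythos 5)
Your proposal is correct and follows essentially the same route as the paper: the authors also derive Corollary~\ref{th:circ} directly from the case analysis of Theorem~\ref{th:nrc1/2}, using the ice-cream-cone/ovular shape observations for cases \emph{2}--\emph{4} and the absence of any circular component of $C(A_{e,h})$ (via the uniqueness of the root $\rho=1/4$ of \eqref{pqrho}) for case \emph{5}. Your identification of nilpotency with $h=0$ and your care in case \emph{5} to exclude every circle, not just $\mathcal C_{1/2}$, match the paper's reasoning exactly.
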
 
Let us mention in passing that in cases {\em 4} and {\em 5} the circle $C_{1/2}$ lies in the interior of $W(A)$. This follows from the fact that $B$ and $A_{e,h}$ up to a permutational similarity are tridiagonal matrices with each pair of off-diagonal entries having different absolute values.  From here it follows \cite[Corollary 7]{BS041} that their Kippenhahn polynomials have simple roots, and so Kippenhahn curves consist of disjoint components.

\section{Rank three partial isometries of defect two} \label{s:r3d2}
In this section, we tackle the remaining case, namely, that of partial isometries $A$ with $\defe A=2$ and such that $C(A)$ does not contain $\mathcal C_{1/2}$. According to Theorem~\ref{th:r1/2v1}, in \eqref{6x61} we then have $d^2+c^2>0$. In what follows, it is convenient to denote \eq{x} x:=\sqrt{d^2+\frac{ceg}{h}}.\en 
 Of course, $x$ exists only if $d^2+\frac{ceg}{h}\geq 0$.
In what follows we require also $x\neq 0$, since otherwise $C(A)$ would contain $\mathcal C_{1/2}$.

Theorem~\ref{th:def2crit} remains our main tool.
\begin{prop}\label{th:no1/2}  Let $A$ be as in \eqref{6x61}, satisfying \eqref{elts}--\eqref{cdef} and such that 
$h\neq 0, c^2+d^2>0$. Then $C(A)$ contains a circle, degenerate or not,  if and only if either 
\eq{crith+} cegh-d^2g^2+\left(e^2+h^2+\frac{ceg}{h}-1\right)x=0,\quad x\leq 3\en
or 
\eq{crith-} cegh-d^2g^2-\left(e^2+h^2+\frac{ceg}{h}-1\right)x=0, \quad x\leq 1.\en
We have $\mathcal C_{\sqrt{1+x}/2}\subset C(A)$ or 
$\mathcal C_{\sqrt{1-x}/2}\subset C(A)$, respectively, if \eqref{crith+} or \eqref{crith-} holds. No other circles can be contained in $C(A)$. 
\end{prop}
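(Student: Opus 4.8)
The plan is to invoke Theorem~\ref{th:def2crit} as the overarching engine: the matrix $A$ (after circular equivalence) is in the form \eqref{6x61} satisfying \eqref{elts}--\eqref{cdef}, so $C(A)$ contains a circle of radius $r$ if and only if the system $p(\rho)=q(\rho)=0$ from \eqref{pqrho} has a solution $\rho=r^2\in(0,1)$, with $p,q$ given by \eqref{pqrho1}. The entire proof therefore reduces to solving this polynomial system under the standing hypotheses $h\neq 0$ and $c^2+d^2>0$, and translating the solvability conditions into \eqref{crith+}--\eqref{crith-}. First I would exploit $h\neq 0$ to analyze $p(\rho)=h\rho^2-\tfrac12 h\rho+\tfrac1{16}((b^2+c^2)h-ceg)=0$: dividing through by $h$ and using the first length relation $b^2+c^2+d^2=1$ to rewrite $b^2+c^2=1-d^2$, the quadratic becomes $\rho^2-\tfrac12\rho+\tfrac1{16}\bigl(1-d^2-\tfrac{ceg}{h}\bigr)=0$, i.e. $\rho^2-\tfrac12\rho+\tfrac1{16}(1-x^2)=0$ with $x$ as in \eqref{x}. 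Its two roots are exactly $\rho_\pm=\tfrac14(1\pm x)$, which is precisely where the radii $\sqrt{1\pm x}/2$ in the conclusion originate.

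Having pinned the candidate radii to the roots $\rho_\pm=\tfrac14(1\pm x)$ of $p$, the next step is to impose $q(\rho_\pm)=0$ as the \emph{second} equation of the system. I would substitute $\rho=\tfrac14(1\pm x)$ into $q(\rho)=\rho^3-\tfrac14(3-h^2)\rho^2+\tfrac1{16}(c^2+b^2+e^2+1-h^2)\rho-\tfrac1{64}b^2e^2$ and simplify. The goal is to show that after clearing denominators and systematically eliminating $b^2+c^2$ via $1-d^2$ and $ceg$ via $h x^2 - h d^2$ (both consequences of \eqref{length} and the definition \eqref{x}), the cubic collapses to the linear-in-$x$ relation $cegh-d^2g^2\pm\bigl(e^2+h^2+\tfrac{ceg}{h}-1\bigr)x=0$, matching \eqref{crith+} for the $+$ root and \eqref{crith-} for the $-$ root. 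The orthogonality relation \eqref{cdef}, namely $ce+df=0$, together with the second length identity $e^2+f^2+g^2+h^2=1$, should be what allows the various mixed terms to coalesce; I expect $d^2f^2=c^2e^2$ and $f^2=1-e^2-g^2-h^2$ to be the crucial substitutions that kill the higher powers of $x$.

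The side constraints $x\le 3$ in \eqref{crith+} and $x\le 1$ in \eqref{crith-} are the admissibility conditions $\rho_\pm\in(0,1)$ translated back through $\rho_\pm=\tfrac14(1\pm x)$: for the $+$ root, $\tfrac14(1+x)<1$ gives $x<3$, and $\rho_+>0$ is automatic since $x>0$; for the $-$ root, $\rho_-=\tfrac14(1-x)>0$ forces $x<1$, while $\rho_-<1$ is automatic. (The endpoint $x=0$ is excluded by hypothesis, as it would reintroduce $\mathcal C_{1/2}$ via $\rho=1/4$.) I would state these inequalities as nonstrict matching the theorem, noting that the boundary cases correspond to degenerate circles. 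The final clause ``no other circles'' is immediate: any circle in $C(A)$ must have radius-squared equal to a common root of $p$ and $q$, and since $p$ is quadratic with exactly the two roots $\rho_\pm$, there are at most these two candidates, so no radius outside $\{\sqrt{1\pm x}/2\}$ can occur.

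The main obstacle I anticipate is the substitution $q(\rho_\pm)=0$: it is a genuine cubic evaluation producing many terms, and the reduction to the clean linear form in \eqref{crith+}--\eqref{crith-} is not visually obvious and will require careful bookkeeping. The delicate point is that one must use \emph{all three} of the defining relations — the two length equations \eqref{length} and the orthogonality \eqref{cdef} — in a coordinated way so that the $x^2$ and $x^3$ contributions cancel and only the stated linear-in-$x$ expression survives; getting the sign conventions right so that the $+$/$-$ roots align correctly with \eqref{crith+}/\eqref{crith-} is the subtle part. This is exactly the kind of step where a computer algebra verification (consistent with the paper's use of SageMath elsewhere) would underwrite the by-hand simplification.
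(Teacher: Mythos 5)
Your proposal is correct and follows essentially the same route as the paper: use Theorem~\ref{th:def2crit}, solve $p(\rho)=0$ via \eqref{length} and the definition of $x$ to get $\rho_\pm=\tfrac14(1\pm x)$, then substitute into $q$ and reduce with \eqref{length} and \eqref{cdef} (the paper's key collapsing identity being $e^2(1-b^2)-d^2(1-h^2)=-d^2g^2$) to obtain \eqref{crith+}/\eqref{crith-}, with the range constraints on $x$ coming from $\rho_\pm\in[0,1)$. The only substantive difference is that the paper carries out the $64q(r_\pm^2)$ computation explicitly and also checks that \eqref{crith-} at $x=1$ reduces to $be=0$, matching the degenerate circle $\{0\}$ of Theorem~\ref{th:C0}, whereas you leave these verifications as bookkeeping.
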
  
\begin{proof}First, we invoke Proposition~\ref{th:gen6x61} to observe that \eqref{crith+} or \eqref{crith-} with $x<1$ is satisfied exactly when $C(A)$ contains a non-degenerate circle. Indeed, taking \eqref{length} into consideration, the first equation in \eqref{pqrho} can be rewritten as 
\[ h\left(\rho-\frac{1}{4}\right)^2 =\frac{1}{16}(d^2h+ceg). \]
So, for $h\neq 0$ the solutions are $\rho_\pm=\frac{1}{4}(1\pm x)$. Consequently, two potential radii of the circles contained in $C(A)$ are $r_\pm=\sqrt{1\pm x}/2$. In turn, the second equation in \eqref{pqrho} holds for $\rho_\pm$ exactly when \ the equality in \eqref{crith+}/ \eqref{crith-}) does. 
This follows from the following chain of equalities: 
$$64q(r_\pm^2)=(e^2+h^2-1+\frac{ceg}{h})(1 \pm x) + (-b^2e^2-(\frac{1}{h}-h)ceg + (1-h^2)(b^2+c^2)) =$$
$$
\pm x(e^2+h^2-1+\frac{ceg}{h}) + e^2(1-b^2)+(h^2-1)+\frac{ceg}{h} - \frac{ceg}{h} + hceg + (1-h^2)(b^2+c^2)= 
$$
$$
e^2(1-b^2) + cegh+ (h^2-1)d^2 \pm (e^2+h^2-1+\frac{ceg}{h})x.
$$
It remains to observe that 

$e^2(1-b^2) -d^2(1-h^2) = e^2c^2+e^2d^2-d^2f^2-d^2g^2-d^2e^2= -d^2g^2$.

In turn, \eqref{crith-} with $x=1$ holds exactly when $be=0$ because 
$$cegh+d^2h^2-d^2(g^2+h^2)-\left(e^2+h^2+\frac{ceg}{h}-1\right)=$$
$$h^2-d^2(h^2+g^2)-(e^2+h^2+1-d^2-1) = $$
$$d^2(1-h^2-g^2)-e^2 = d^2(f^2+e^2)-e^2 = $$
$$c^2e^2+d^2e^2-e^2 = -b^2e^2=0.$$ 
According to Theorem~\ref{th:C0}, this corresponds to the degenerate circle $\{0\}$.
\end{proof}

Things simplify nicely in case $cg=0$.

\begin{thm}\label{th:c0}Let $A$ be as in \eqref{6x61}, satisfying \eqref{elts}--\eqref{cdef} and such that $h\neq 0,\ cg=0$. Then $C(A)$ contains a circle different from $\mathcal C_{1/2}$ if and only if $d\neq 0,\ c=g=0$. If these conditions hold, then 
\eq{2c1e} C(A)=\mathcal C_{\sqrt{1-d}/2}\cup \mathcal C_{\sqrt{1+d}/2}\cup E, \en
where $E$ is the ellipse with the foci $0,h$ and major axis of the length one.
\end{thm}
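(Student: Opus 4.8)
The plan is to obtain the equivalence from Proposition~\ref{th:no1/2} and then read off the decomposition \eqref{2c1e} by factoring the Kippenhahn polynomial \eqref{chpospe}. The key simplification is that under $cg=0$ we have $ceg=0$, so the quantity \eqref{x} collapses to $x=\sqrt{d^2}=d$; in particular, a circle distinct from $\mathcal C_{1/2}$ can occur only if $d\neq0$, since $x=0$ is precisely the $\mathcal C_{1/2}$ case.

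Assuming $d\neq0$, I would split according to which of $c,g$ vanishes. If $c\neq0$ (hence $g=0$), then by the Convention $e\neq0$, and \eqref{cdef} forces $d>0,\ f<0$; feeding $g=0$ into \eqref{crith+}--\eqref{crith-} and using \eqref{length} turns both criteria into $f^2d=0$, which cannot hold, so no circle occurs. Hence $c=0$, whereupon \eqref{cdef} gives $f=0$ and \eqref{length} gives $e^2+h^2-1=-g^2$. Substituting $c=0$ and this relation into \eqref{crith+} and \eqref{crith-} reduces them to $-dg^2(1+d)=0$ and $dg^2(1-d)=0$, respectively. Since $d\neq0$, the first is solvable only for $g=0$, while for $g\neq0$ the second forces $d=1$, which yields merely the degenerate value $\rho_-=0$ (the point $\{0\}$, accounted for by Theorem~\ref{th:C0}) and no genuine circle. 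Thus a non-degenerate circle other than $\mathcal C_{1/2}$ exists exactly when $c=g=0$ and $d\neq0$; conversely, for these values both criteria hold with $x=d\in(0,1]$, so $\mathcal C_{\sqrt{1+d}/2}\subset C(A)$, and this circle differs from $\mathcal C_{1/2}$ because $d\neq0$.

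For the explicit description I would fix $c=g=0$, $d\neq0$ (so $f=0$, $b^2+d^2=1$, $e^2+h^2=1$) and substitute into \eqref{pqrho1}. This gives $p(\rho)=h\bigl(\rho^2-\tfrac12\rho+\tfrac{b^2}{16}\bigr)$, while a short computation (using $e^2=1-h^2$, $b^2=1-d^2$) shows $q(\rho)=\bigl(\rho^2-\tfrac12\rho+\tfrac{b^2}{16}\bigr)\bigl(\rho-\tfrac{e^2}{4}\bigr)$, so that \eqref{chpospe} factors as
\[ P(\lambda,\theta)=\Bigl(\rho^2-\tfrac12\rho+\tfrac{b^2}{16}\Bigr)\Bigl(\lambda^2-\lambda h\cos\theta-\tfrac{e^2}{4}\Bigr),\qquad \rho=\lambda^2. \]
The $\theta$-independent factor has roots $\rho=\tfrac{1\pm d}{4}$ (as $\sqrt{1-b^2}=d$), so by Proposition~\ref{th:critcirc} it contributes exactly the circles $\mathcal C_{\sqrt{1-d}/2}$ and $\mathcal C_{\sqrt{1+d}/2}$; the remaining quadratic is precisely the Kippenhahn polynomial of the $2\times2$ block $\left[\begin{smallmatrix}0&e\\0&h\end{smallmatrix}\right]$ that splits off $A$ along the reducing subspace $\Span\{e_3,e_6\}$, whose eigenvalues are $0,h$, so by the elliptical range theorem its Kippenhahn curve is the ellipse $E$ with foci $0,h$ and major axis $\sqrt{e^2+h^2}=1$. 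Since $h\neq0$ this $E$ is a genuine ellipse and the three components are distinct, giving \eqref{2c1e}. The routine part is the factorization of $q$; the step needing the most care is the case analysis, where one must discard the configurations in which only one of $c,g$ vanishes and separate the degenerate value $\rho=0$ from genuine circles.
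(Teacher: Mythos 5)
Your proposal is correct and follows essentially the same route as the paper: Proposition~\ref{th:no1/2} reduces the existence question to the sign cases of \eqref{crith+}--\eqref{crith-} (which you, like the paper, resolve by a short case analysis forcing $f=g=0$ and hence $c=0$, with the $d=1$, $g\neq0$ branch yielding only the degenerate circle $\{0\}$), and the description \eqref{2c1e} comes from splitting off the $2\times2$ summand $\left[\begin{smallmatrix}0&e\\0&h\end{smallmatrix}\right]$ and applying the elliptical range theorem. The only cosmetic difference is that you justify the residual two circles by explicitly factoring $q(\rho)=\bigl(\rho^2-\tfrac12\rho+\tfrac{b^2}{16}\bigr)\bigl(\rho-\tfrac{e^2}{4}\bigr)$, whereas the paper notes that the complementary summand is $4\times4$ so its Kippenhahn curve is exhausted by the two circles already found; both verifications are sound.
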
 
\begin{proof}{\sl Necessity.}
If $cg=0$, then $x=d$, and conditions \eqref{crith+}/\eqref{crith-} take the form 
$d^2g^2\pm (f^2+g^2)d=0$. 
By Proposition~\ref{th:Kipfur} we need $d\neq 0$ in order for $C(A)$ to contain circles different from $\mathcal C_{1/2}$. Consequently, \eqref{crith+}/\eqref{crith-} further reduces to \eq{crith1} dg^2\pm (f^2+g^2)=0.\en 
In case of the plus sign, $dg^2+f^2+g^2=0$, implying $f=g=0$. Due to \eqref{cdef} then $ce=0$, and by Convention $c=0$.

In case of the minus sign, $dg^2=f^2+g^2$. This is only possible if $d=1$ and $f=0$. The respective circle degenerates into $\{0\}$. 

{\sl Sufficiency.} Let $c=g=0$. Due to \eqref{cdef}, then also $f=0$,
and \eqref{crith1} holds with both sign choices. If $d<1$, then 
 $\mathcal C_{\sqrt{1\pm d}/2}\subset C(A)$ by Proposition~\ref{th:no1/2}. 
 
 Observe further that the respective matrix $A$ is permutationally similar to 
\eq{g0} \begin{bmatrix}0 &  e\\ 0 & h\end{bmatrix}\oplus B, \text{ where } B=\begin{bmatrix}0 & 0 & 1 & 0\\ 0 & 0 & 0 & b \\ 0 & 0 & 0 & d\\ 0 & 0 & 0 & 0\end{bmatrix}. \en 
Due to the Elliptical range theorem, the Kippenhahn curve of the first summand is exactly $E$.  Consequently, $C(A)=E\cup C(B)$, and so  
$\mathcal C_{\sqrt{1\pm d}/2}\subset C(B)$. Since  $B\in\C^{4\times 4}$, the two circles $\mathcal C_{\sqrt{1\pm d}/2}$ actually constitute the whole $C(B)$, and \eqref{2c1e} follows. 

If $d=1$, then $b=0$, and the summand $B$ in \eqref{g0} by a permutational similarity reduces further to $(0)\oplus J_3$, so \eqref{2c1e} holds again. \end{proof}

Note for completeness that the matrix $B$ in \eqref{g0} is unitarily irreducible when $b\neq 0$.

\begin{cor}\label{th:circgo} The numerical range of $A$ satisfying conditions of Theorem~\ref{th:c0} is the circular disk $\{z\colon \abs{z}\leq \sqrt{1+d}/2\}$ if $d\geq 2h+h^2$ and a (non-circular) convex hull of $E$ and $\mathcal C_{\sqrt{1+d}/2}$otherwise.\end{cor}
This follows immediately from the fact that the numerical radius of $E$ is attained at its rightmost point $((1+h)/2,0)$. 

\begin{thm}\label{th:2circ} Let $A$ be as in Proposition~\ref{th:no1/2}. Then : {\em (i)} $C(A)$ contains two non-degenerate circles different from $\mathcal C_{1/2}$ if and only if, in addition to other conditions imposed on the entries of $A$, also $c=f=g=0$ while $d\neq 0,1$; {\em (ii)} If this is the case, then  \eqref{2c1e} holds.
\end{thm}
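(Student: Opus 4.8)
The plan is to read off both directions from Proposition~\ref{th:no1/2}, which tells us that the only circles $C(A)$ can contain are $\mathcal{C}_{\sqrt{1+x}/2}$ and $\mathcal{C}_{\sqrt{1-x}/2}$, arising precisely when \eqref{crith+} and \eqref{crith-}, respectively, hold. Since these two radii are distinct (and both differ from $1/2$) whenever $x\neq 0$, and since $x\neq 0$ is a standing assumption in this section, having two distinct non-degenerate circles different from $\mathcal{C}_{1/2}$ is equivalent to both \eqref{crith+} and \eqref{crith-} holding, together with the non-degeneracy requirement $x\neq 1$ (the value $x=1$ collapsing $\mathcal{C}_{\sqrt{1-x}/2}$ to $\{0\}$). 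Thus the heart of part (i) is to determine when the pair \eqref{crith+}--\eqref{crith-} is simultaneously satisfied.

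For the necessity direction I would add and subtract the two equations \eqref{crith+}, \eqref{crith-}. Addition yields $cegh - d^2g^2 = 0$, while subtraction yields $(e^2+h^2+ceg/h-1)\,x = 0$; since $x\neq 0$, the latter forces $e^2+h^2+ceg/h=1$. Invoking the second equality of \eqref{length} to replace $e^2+h^2-1$ by $-(f^2+g^2)$, this rewrites as $ceg/h = f^2+g^2$. I then split on $g$. If $g=0$, the relation gives $f=0$, whence \eqref{cdef} forces $ce=0$ and the Convention gives $c=0$, so $c=f=g=0$. If $g\neq 0$, the addition relation gives $ceh=d^2g$, and combining with $ce=-df$ from \eqref{cdef} produces $f=-dg/h$ (using $d\neq 0$, which itself follows because $d=0$ would force $x=0$); substituting $f^2=d^2g^2/h^2$ back into $ceg/h=f^2+g^2$ collapses to $g^2=0$, a contradiction. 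Hence $c=f=g=0$ in all cases, and the non-degeneracy of both circles rules out $d=0$ (giving $x=0$) and $d=1$ (giving $x=1$), so $d\neq 0,1$.

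For sufficiency, assuming $c=f=g=0$ with $d\neq 0,1$, one checks directly that $x=d$ and that $e^2+h^2=1$ by \eqref{length}; both \eqref{crith+} and \eqref{crith-} then reduce to the identity $0=0$, so both circles $\mathcal{C}_{\sqrt{1\pm d}/2}$ appear, and since $0<d<1$ they are non-degenerate and different from $\mathcal{C}_{1/2}$. Finally, part (ii) is immediate: the hypotheses $c=g=0$, $d\neq 0$ place $A$ squarely in the scope of Theorem~\ref{th:c0}, whose conclusion is exactly \eqref{2c1e}.

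I expect the main obstacle to be the bookkeeping in the $g\neq 0$ subcase of the necessity argument, where the sign conventions of \eqref{elts} on $f$ and the coupling \eqref{cdef} among $c,d,e,f$ must be handled carefully so that the derived relation $f=-dg/h$ is consistent and genuinely forces the contradiction; keeping track of which quantities are known to be nonnegative, and of the degenerate-circle threshold $x=1$ versus the excluded value $x=0$, is where the argument is most delicate.
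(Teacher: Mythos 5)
Your proposal is correct and follows essentially the same route as the paper: both arguments reduce part (i) to the simultaneous validity of \eqref{crith+} and \eqref{crith-}, combine the resulting identities with \eqref{length}, \eqref{cdef} and the Convention to force $c=f=g=0$ (your direct substitution $f=-dg/h$ in the $g\neq 0$ subcase is just a reorganization of the paper's factored relations \eqref{fg1_new}--\eqref{fg3}), and then invoke Theorem~\ref{th:c0} for part (ii).
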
  
\begin{proof} If $C(A)$ contains two circles different from $\{0\}$ and $\mathcal C_{1/2}$, then Proposition~\ref{th:no1/2} implies that \eq{fg}  cegh-d^2g^2= e^2+h^2+\frac{ceg}{h}-1= 0.\en
From the first equality in \eqref{fg} with the use of \eqref{cdef} we find that
\eq{fg1_new} dfgh+d^2g^2 = dg(fh+dg)=0. \en
From the second equality in \eqref{fg} using \eqref{length} we infer that \eq{fg1} \frac{ceg}{h}=1-e^2-h^2=f^2+g^2.\en  
Plugging this into the first equality of \eqref{fg}, and using \eqref{cdef} one more time:
\eq{fg3} (f^2+g^2)h^2-d^2g^2=0. \en
From \eqref{fg3} we conclude that if either $d=0$ or $g=0$, then $f=g=0$. If $d=0$, then  by Convention and \eqref{cdef} we infer $c=0$ and therefore from \eqref{fg1} $x=f=g=0$. Hence $d\neq0$. Now from $\eqref{fg1_new}$ either $g=0$ or $fh+dg = 0$. If the latter holds, then from \eqref{fg3} we have $g^2h^2 = 0$, thus $g=0$ as well. Further, from \eqref{fg1} we conclude that $f=0$. Finally, using the Convention and \eqref{length} we conclude that $c=0$.  It remains to invoke Theorem~\ref{th:c0}. Note that $d=1$ has to be excluded since for this choice one of the circles in \eqref{2c1e} is degenerate. 
\end{proof} 

It remains to consider matrices \eqref{6x61} satisfying \eqref{length},\eqref{cdef} and such that $c,h,g\neq 0$. By convention then also $e\neq 0$, and thus, by \eqref{cdef}, $df\neq 0$ as well.

 From Proposition~\ref{th:uirr} it easily follows that such matrices are unitarily irreducible if $b\neq 0$ and otherwise are permutationally similar to 
\[ (0)\oplus \begin{bmatrix} 0 & 0 & 1 & 0 & 0 \\ 0 & 0 & 0 & c & e\\

0 & 0 & 0 & d & f\\ 0 & 0 & 0 & 0 & g\\ 0 & 0 &  0 &  0 & h \end{bmatrix}, \]
with the second summand not further unitarily reducible.

According to Proposition~\ref{th:2circ}, in this setting  at most one of the condition \eqref{crith+}, \eqref{crith-} may hold, and so $C(A)$ contains at most one circle. Examples in Section \ref{s:Ex} show that both cases can materialize. We are interested in the situation when this circle (provided that it exists) is actually the boundary of $W(A)$, i.e., when $A$ has a circular numerical range, 
\begin{thm}\label{circone} Let $A$ be  circularly similar to the matrix \eqref{6x61} with $cdefgh\neq 0$. Then $W(A)$ is a circular disk if and only if \eqref{crith+} holds and 
\eq{dercond} 3x^2+2h^2x+h^2+e^2-d^2-1> 4hx\sqrt{1+x}.\en
\end{thm}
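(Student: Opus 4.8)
The plan is to reduce everything to the behaviour of the single–variable polynomial $P(\lambda,0)$, obtained by setting $\theta=0$ in the explicit Kippenhahn polynomial \eqref{chpospe}, and then to recognize \eqref{dercond} as the inequality $P'(r_+,0)>0$. Throughout write $\rho=\lambda^2$, $\rho_\pm=(1\pm x)/4$ and $r_\pm=\sqrt{1\pm x}/2$, the two potential radii produced in Proposition~\ref{th:no1/2}. First I would show that \eqref{crith+} is forced. If $W(A)$ is a circular disk, then by Theorem~\ref{th:GWW63} it is centered at the origin, its boundary is a full circle contained in $C(A)$, and, $A$ being a nonzero partial isometry with $\norm{A}=1$, its radius satisfies $w(A)\geq\norm{A}/2=1/2$. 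By Proposition~\ref{th:no1/2} the only circular components available have radii $r_\pm$, and, since $cdefgh\neq0$, Proposition~\ref{th:2circ} forbids both from occurring at once; because $r_-<1/2\leq w(A)$ the boundary must be $\mathcal C_{r_+}$, whose presence is exactly condition \eqref{crith+}. This yields the necessity of \eqref{crith+}.

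Second, assuming \eqref{crith+}, so that $\mathcal C_{r_+}\subset C(A)$ and hence the closed disk of radius $r_+$ lies in the convex set $W(A)$, I would observe that $W(A)$ is a disk if and only if $w(A)=r_+$. To locate $w(A)$ I invoke part (ii) of Proposition~\ref{th:gen6x61}: with $\rho_0=w(A)^2$ one has $p(\rho_0)=h(\rho_0-\rho_+)(\rho_0-\rho_-)\geq0$, since $\rho_0\geq\rho_+>\rho_-$ and $h>0$. Hence the numerical radius is attained at the right endpoint, i.e. $w(A)=\lambda_{\max}(0)$ equals the largest root of $P(\lambda,0)$, with the disk case $p(\rho_0)=0$ occurring precisely when $\rho_0=\rho_+$. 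Therefore $W(A)$ is a disk if and only if $r_+$ is the largest root of $P(\lambda,0)$.

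The crux is translating ``$r_+$ is the largest root of $P(\lambda,0)$'' into \eqref{dercond}. Since $\mathcal C_{r_+}$ is a component, $\rho_+$ is a common root of $p$ and $q$, so $\lambda=r_+$ is already a root of $P(\lambda,0)=-\lambda p(\rho)+q(\rho)$; as $P(\lambda,0)\to+\infty$ with $\lambda\to+\infty$, this root is the largest exactly when $P(\lambda,0)$ does not drop below zero for $\lambda>r_+$, the borderline being $P'(r_+,0)=0$ (the residual curve meeting the circle) and the strict interior case $P'(r_+,0)>0$. Differentiating $P(\lambda,0)=-\lambda p(\rho)+q(\rho)$ and using $p(\rho_+)=0$ together with $p'(\rho_+)=hx/2$ (read off from \eqref{pqrho1}) gives $P'(r_+,0)=r_+\bigl(2q'(\rho_+)-r_+hx\bigr)$, so the condition becomes $2q'(\rho_+)>r_+hx$. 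Computing $q'(\rho_+)$ from \eqref{pqrho1} at $\rho_+=(1+x)/4$, and substituting $b^2+c^2=1-d^2$ from \eqref{length}, one finds $16q'(\rho_+)=3x^2+2h^2x+h^2+e^2-d^2-1$, while $8r_+hx=4hx\sqrt{1+x}$; multiplying the inequality by $8$ reproduces \eqref{dercond} verbatim.

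The main obstacle is making rigorous the step ``$P'(r_+,0)>0$ if and only if $r_+$ is the \emph{global} largest root'', since a priori $P(\lambda,0)$ might acquire a root beyond $r_+$ even when $P'(r_+,0)>0$. I expect to close this using the factorization $P(\lambda,\theta)=(\rho-\rho_+)\bigl(-\lambda h(\rho-\rho_-)\cos\theta+\tilde q(\rho)\bigr)$, where $\tilde q=q/(\rho-\rho_+)$, so that $P(\lambda,0)=(\lambda^2-r_+^2)R(\lambda)$ with $R(\lambda)=-\lambda h(\rho-\rho_-)+\tilde q(\rho)$. The residual factor has the same even/odd structure in $\theta$ as \eqref{chpospe}, so the argument of Proposition~\ref{th:gen6x61}(ii) applies to it and identifies its largest root with the rightmost extent of the residual curve; this pins down the sign bookkeeping through $R(r_+)=q'(\rho_+)-r_+hx/2$, which is precisely the quantity governing $P'(r_+,0)$, and rules out a spurious root beyond $r_+$. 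Combined with the dichotomy from Proposition~\ref{th:gen6x61}(ii) (and recorded geometrically in Corollary~\ref{th:geom_crith_circ}), this completes the equivalence.
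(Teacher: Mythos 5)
Your overall strategy is the same as the paper's: dismiss the $r_-$ circle because $r_-<1/2\leq w(A)$, use Proposition~\ref{th:gen6x61}(ii) to locate $w(A)$ on the real axis so that circularity of $W(A)$ reduces to ``$r_+$ is the largest root of $P(0,\lambda)$'', and then convert that statement into the sign of $P'(0,r_+)$. Your derivative computation is correct and reproduces the paper's formula
$P'(0,r_+)=-\tfrac{h}{4}x(x+1)+\tfrac{\sqrt{1+x}}{16}\bigl(3x^{2}+2h^{2}x+h^{2}+e^{2}-d^{2}-1\bigr)$ exactly.

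However, the step you yourself flag as ``the main obstacle'' is a genuine gap, and the fix you sketch does not close it. Writing $P(\lambda,0)=(\lambda^{2}-r_+^{2})R(\lambda)$ with $R$ a quartic of positive leading coefficient, you have correctly that $P'(0,r_+)=2r_+R(r_+)$, but $R(r_+)>0$ does \emph{not} rule out roots of $R$ beyond $r_+$: a quartic that is positive at $r_+$ and tends to $+\infty$ may still have two roots (an even number) in $(r_+,\infty)$, in which case $r_+$ would fail to be the largest root of $P(0,\lambda)$ even though $P'(0,r_+)>0$. Appealing to the ``even/odd structure'' of the residual factor and to Proposition~\ref{th:gen6x61}(ii) only tells you where the residual curve attains its rightmost point; it does not bound the number of its real points to the right of $r_+$. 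What is missing is a count of how many eigenvalues of $\re A$ can exceed $1/2$. The paper supplies this by a Cauchy interlacing argument: the leading $5\times5$ principal submatrix of $M=A+A^{T}-I$ has eigenvalues $-1,\,-1\pm\sqrt{1+d},\,-1\pm\sqrt{1-d}$, four of which are negative, so $M$ has at most two positive eigenvalues, i.e.\ $P(0,\lambda)$ has at most two roots exceeding $1/2$. Since $r_+\geq 1/2$ is one of them, $r_+$ is at least the second largest root, and only then does the dichotomy ``largest iff $P'(0,r_+)>0$'' become valid (equivalently, $R$ has at most one root in $(r_+,\infty)$, and $R(r_+)>0$ forces that number to be zero). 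You need to add this interlacing (or an equivalent root count) to make your argument complete.
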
 
\begin{proof}Condition \eqref{crith-}, if holds, corresponds to a circle of the radius $r_-<1/2$. Since $w(A)\geq 1/2$, it cannot possibly serve as the boundary of $W(A)$. 

On the other hand, if \eqref{crith+} holds, the respective value $\rho_+=(1+x)/4$ is the common root of the polynomials $p,q$ in \eqref{chpospe}, and $r_+=\sqrt{\rho_+}$ is an independent of $\theta$ root of the respective Kippenhahn polynomial which is grater than $\frac{1}{2}$. By Proposition~\ref{th:gen6x61}, $w(A)$ is attained on the real line, and therefore it is sufficient to check that $r_+$ is the largest root of both $P(0, \lambda)$ and $P(\pi, \lambda)$. Things simplify further. Since $p$ is a quadratic polynomial with positive leading coefficient and $r_+^2$ is its largest root, we conclude that $P(0, \lambda) < P(\pi, \lambda)$ for $\lambda > r_+$. Hence, to decide whether $W(A)$ is circular or not, it is enough to check whether $r_+$ is the largest root of $P(0, \lambda)$.

Finally, for $\theta=0$  there are at most two roots greater than $\frac{1}{2}$. Indeed, they correspond to positive eigenvalues of the matrix 
\eq{M} A+A^T-I=\begin{bmatrix}
-1 & 0 & 0 & 1 & 0 & 0\\
0 & -1 & 0 & 0 & b & 0\\
0 & 0 & -1 & 0 & c & e\\
1 & 0 & 0 & -1 & d & f\\
0 & b & c & d & -1 & g\\
0 & 0 & e & f & g & 2h-1
\end{bmatrix}=: M.\en 
The eigenvalues of the left upper 5-by-5 submatrix of $M$ are easily computable, and they are equal to $-1,-1\pm\sqrt{1+d},-1\pm\sqrt{1-d}$. 
So, there are four negative and one positive amongst them. The Cauchy interlacing theorem suggests that $M$ itself cannot have more than two positive eigenvalues, as claimed. 

Now, to determine if $r_+$ is the largest or merely the second largest root of $P(0,\lambda)$ we invoke the test according to which it depends on the sign of $P'(0,r_+)$. A direct computation reveals that 
\[ P'(0,r_+)=-\frac{h}{4}\,x(x+1)
+\frac{\sqrt{1+x}}{16}\Bigl(3x^{2}+2h^{2}x+h^{2}+e^{2}-d^{2}-1\Bigr). \]
So, $ P'(0,r_+)>0$ if and only if \eqref{dercond} holds. This completes the proof. 
\end{proof}

\section{Examples of $C(A)$ for $A$ with $\rk A=3, \defe A=2$}\label{s:Ex}

Figures~\ref{fig:ellipse_sticks_out}--\ref{fig:ellipse_inside} plot Kippenhahn curves of matrices satisfying conditions of Theorem~\ref{th:2circ}, and therefore consist of two concentric circles and an ellipse. This ellipse intersects both circles in Figure~\ref{fig:ellipse_sticks_out}, contains the inner circle while intersecting the outer one in Figure~\ref{fig:ellipse_sticks_less}, and lies between the two circles in Figure~\ref{fig:ellipse_inside}. The numerical range of $A$ is circular in the latter case only, in agreement with Corollary~\ref{th:circgo}.

    \begin{figure}[H]
        \centering
        \includegraphics[width=0.7\linewidth]{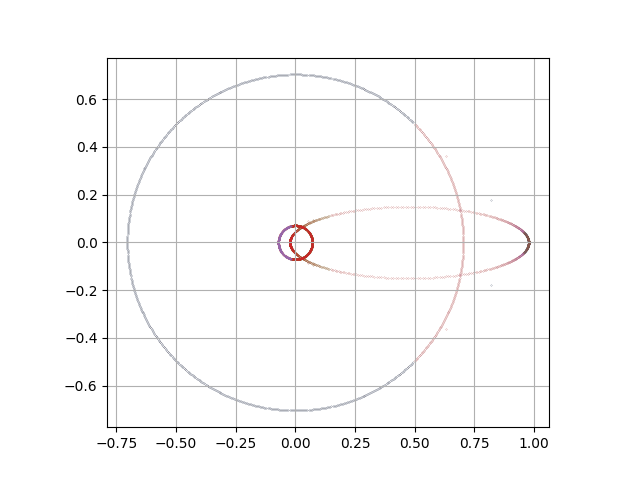}
        \caption{$\sqrt{1-d^2}=0.2$ and $\sqrt{1-h^2}=0.3$}
        \label{fig:ellipse_sticks_out}
    \end{figure}

    \begin{figure}[H]
        \centering
        \includegraphics[width=0.7\linewidth]{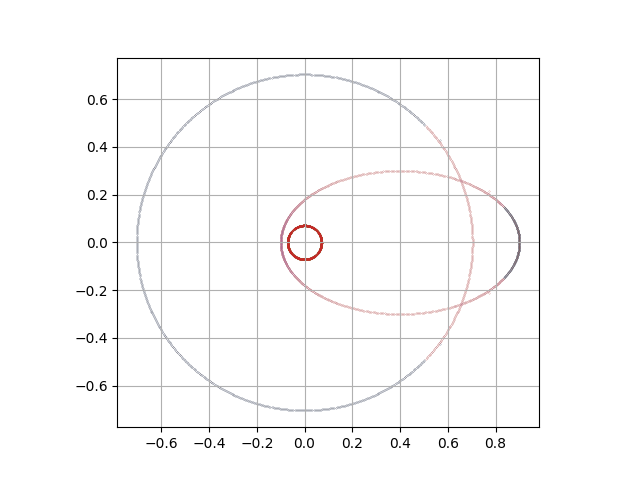}
        \caption{$\sqrt{1-d^2}=0.2$ and $\sqrt{1-h^2}=0.6$}
        \label{fig:ellipse_sticks_less}
    \end{figure}
    \begin{figure} [H]
        \centering
        \includegraphics[width=0.7\linewidth]{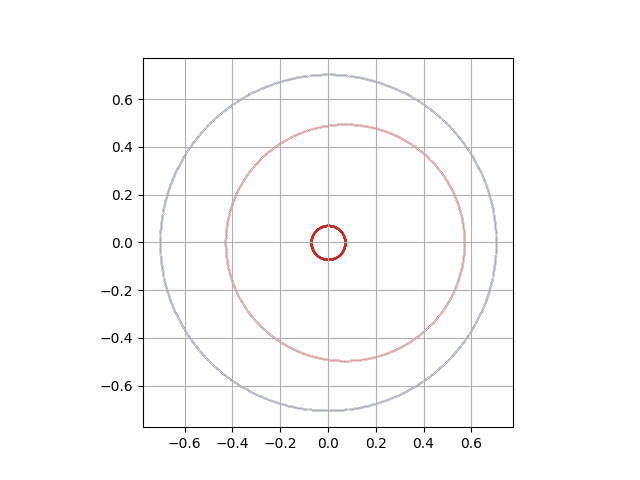}
        \caption{$\sqrt{1-d^2}=0.2$ and $\sqrt{1-h^2}=0.99$}
        \label{fig:ellipse_inside}
    \end{figure}

The rest of examples are for the case of matrices $A$ containing exactly one circle as a component of $C(A)$.

{\em 1.} Set (we rounded the values up to 4 digits) $$A = \begin{pmatrix}
    
0.0 & 0.0 & 0.0 & 1.0 & 0.0 & 0.0 \\
0.0 & 0.0 & 0.0 & 0.0 & 0.9469 & 0.0 \\
0.0 & 0.0 & 0.0 & 0.0 & -0.2926 & 0.1464 \\
0.0 & 0.0 & 0.0 & 0.0 & 0.1327& 0.3228 \\
0.0 & 0.0 & 0.0 & 0.0 & 0.0 & 0.2864 \\
0.0 & 0.0 & 0.0 & 0.0 & 0.0 & 0.8900

\end{pmatrix}.$$
then $C(A)$ contains only circle of radius $r \approx 0.48<0.5$, see Figure~\ref{only_small:fig}.

\begin{figure}[H] 
    \centering
    \includegraphics[width=1\linewidth]{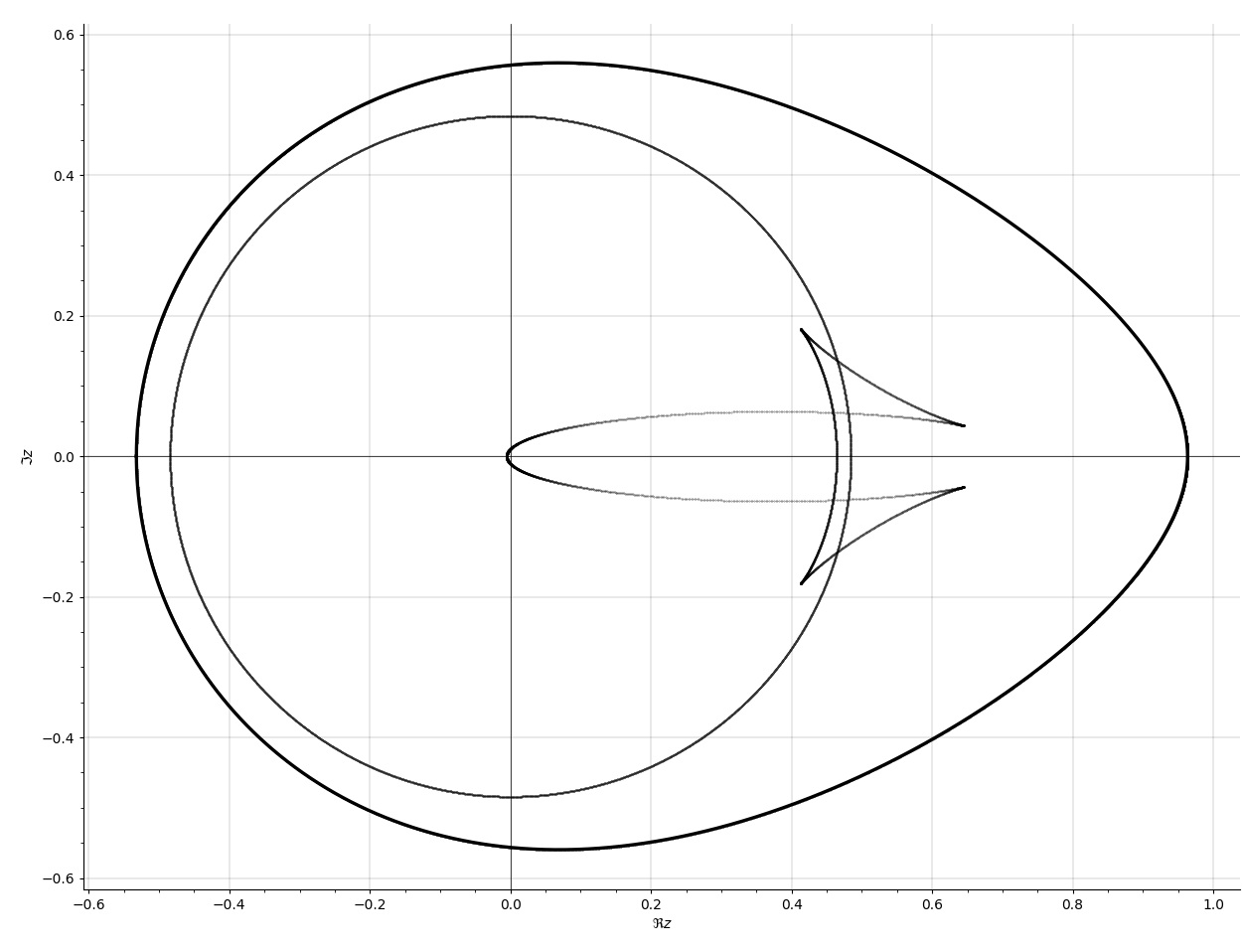}
    \caption{Only circle of radius $r \approx 0.48$}
    \label{only_small:fig}
\end{figure}

{\em 2.} Set (we rounded the values up to 4 digits)
$$A = \begin{pmatrix}

0.0 & 0.0 & 0.0 & 1.0 & 0.0 & 0.0 \\
0.0 & 0.0 & 0.0 & 0.0 & 0.6380 & 0.0 \\
0.0 & 0.0 & 0.0 & 0.0 & 0.3687 & 0.4362 \\
0.0 & 0.0 & 0.0 & 0.0 & 0.6759 & -0.2380 \\
0.0 & 0.0 & 0.0 & 0.0 & 0.0 & -0.7903 \\
0.0 & 0.0 & 0.0 & 0.0 & 0.0 & 0.3583
\end{pmatrix},$$
then there is only one circle of radius $r \approx 0.41$, see Figure~\ref{fig:big_circle}. The Kippenhahn curve consists of three nested closed components, the circle being the intermediate one.
\begin{figure}[H]
    \centering
    \includegraphics[width=1\linewidth]{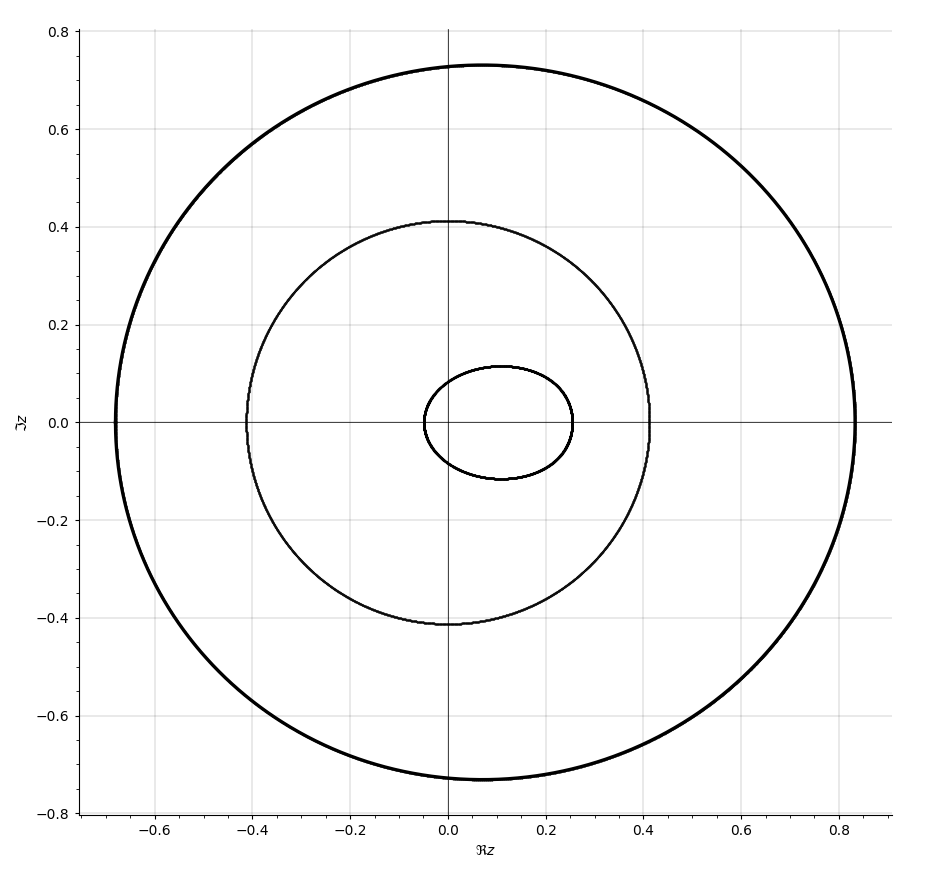}
    \caption{Only circle of radius $r \approx 0.41$.}
    \label{fig:big_circle}
\end{figure}

{\em 3.} Set (we rounded the values up to 4 digits)
$$A = \begin{pmatrix}

0.0 & 0.0 & 0.0 & 1.0 & 0.0 & 0.0 \\
0.0 & 0.0 & 0.0 & 0.0 & 0.0923 & 0.0 \\
0.0 & 0.0 & 0.0 & 0.0 & 0.7491 & 0.3001 \\
0.0 & 0.0 & 0.0 & 0.0 & 0.6558 & -0.3428 \\
0.0 & 0.0 & 0.0 & 0.0 & 0.0 & 0.8725 \\
0.0 & 0.0 & 0.0 & 0.0 & 0.0 & 0.1760
\end{pmatrix},$$
Then there is only circle of radius $r \approx 0.73>0.5$. It is the exterior component of the Kippenhahn curve, and $W(A)$ therefore is a circular disk.

\begin{figure}[H] 
    \centering
    \includegraphics[width=1\linewidth]{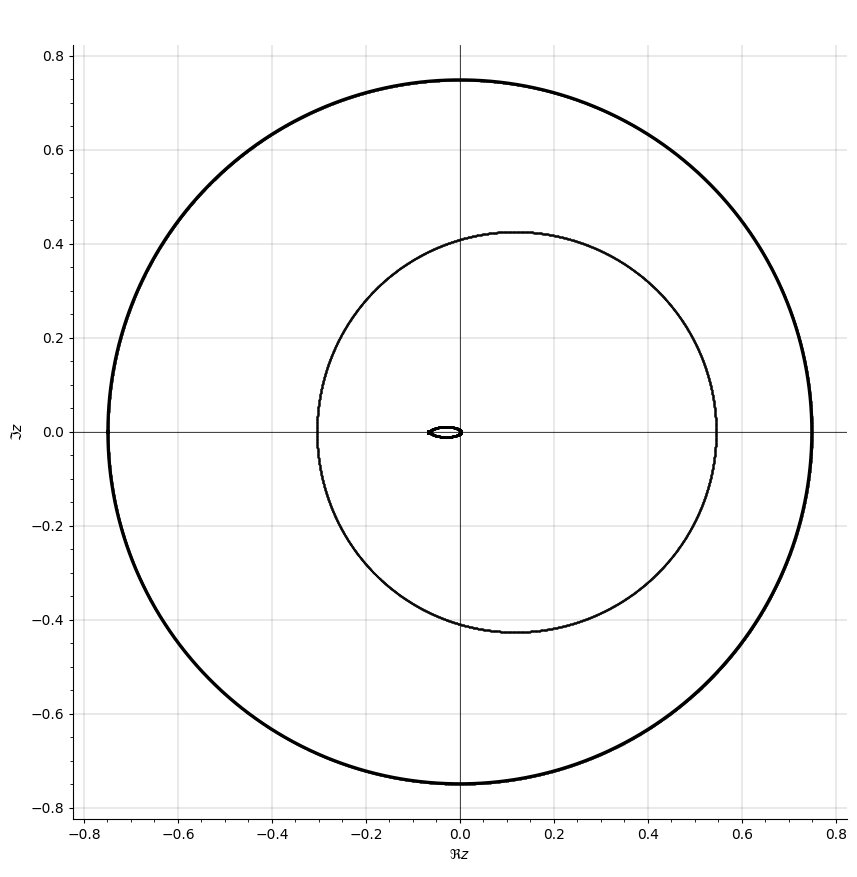}
    \caption{Only circle of radius $r \approx 0.73$}
    \label{only_big:fig}
\end{figure}

\section{Acknowledgements}
The second author is grateful to Timur Garaev for helpful discussions and pointing out Example~\ref{ex:even_dim} in dimension four.
\bibliographystyle{amsplain}
\bibliography{master}
\end{document}